\newcommand{\bari}{\overline{\imath}}
\newcommand{\barj}{\overline{\jmath}}
\newcommand{\On}{{\operatorname{O}(N)}}
\newcommand{\OGr}{{\operatorname{OGr}}}
\newcommand{\CHom}{\mathcal{H}om}
\newcommand{\Gortz}{{G\"ortz}\xspace}
\begin{document}
\title{On the flatness of spin local models for split even orthogonal groups}
\author{Jie Yang}
\address{Yau Mathematical Sciences Center, Tsinghua University, Haidian District, Beijing 100084, China}
\email{jie-yang@mail.tsinghua.edu.cn}

\begin{abstract}
    Let $F$ be a complete discretely valued field with ring of integers $\mathcal{O}$ and residue field of characteristic $p>2$. Let $G=\operatorname{GO}_{2n}$ denote the split orthogonal similitude group over $F$. For any parahoric level structure, we prove that the associated spin local model for $G$ is a flat $\mathcal{O}$-scheme with reduced special fiber. This confirms a conjecture of Pappas and Rapoport \cite[Conjecture 8.1]{pappas2009local} in the split case. As a corollary, we construct a flat (integral) moduli space of PEL-type D. 
\end{abstract}

\maketitle

\setcounter{tocdepth}{1}
\tableofcontents

\section{Introduction}\label{sec-introduction}
\subsection{}
Shimura varieties of PEL type are important objects in number theory. Many deep results about general Shimura varieties are first established in the PEL setting. A primary reason that they are relatively tractable is that these varieties can be realized as moduli spaces of abelian varieties with polarizations, endomorphisms and level structures.

A central arithmetic problem is to construct ``reasonable" integral models of Shimura varieties over $p$-adic integers. In \cite{rapoport1996period}, Rapoport and Zink constructed an integral version $\CA^\naive$ of the PEL-type moduli space with parahoric level structure at $p$ by extending the moduli problem from the reflex field to its ring of integers (localized at $p$). They showed that the corresponding Shimura variety is an open and closed subscheme of the generic fiber of $\CA^\naive$. Furthermore, $\CA^\naive$ is \etale locally isomorphic to the associated Rapoport--Zink local model $\RM^\naive$. Therefore, the flatness of $\CA^\naive$ is equivalent to the flatness of $\RM^\naive$.     When the underlying group involves only type A and C, and splits over an unramified extension of $\BQ_p$, \Gortz \cite{gortz2001flatness,gortz2003flatness} proved that $\RM^\naive$ is flat with reduced special fiber. However, Rapoport--Zink models are now called ``naive", because they are not always flat: Pappas \cite{pappas2000arithmetic}  first observed a failure of flatness in the case of ramified unitary groups, and Genestier later noted similar issues for split orthogonal groups.

One remedy for the non-flatness of $\RM^\naive$ is to refine the (naive) moduli problem by imposing additional linear algebraic conditions, thereby cutting out a closed subscheme $\RM\sset\RM^\naive$ that is flat with the same generic fiber as $\RM^\naive$. 
Unlike the cases for symplectic groups and unitary groups (see \cite{gortz2001flatness,gortz2003flatness, pappas2000arithmetic, pappas2009local, smithling2015moduli, yu2019moduli, luo}, etc), local models for orthogonal groups have been less systematically investigated in the literature (but see \cite{smithling2011topological, he2020good, Zachos23}) for related results).

We now introduce some notation used in our work.
Let $F$ be a complete discretely valued field with ring of integers $\CO$, uniformizer $\pi_0$, and residue field $k$ of characteristic\footnote{The case $p=2$ is substantially different and more difficult; we do not treat it here. } $p>2$. For an integer $n\geq 1$, set $V=F^{2n}$ and equip $V$ with a non-degenerate symmetric $F$-bilinear form $$\psi\colon V\times V\ra F.$$ Assume $\psi$ is split, i.e., $V$ admits an $F$-basis $(e_i)_{1\leq i\leq 2n}$ such that $\psi(e_i,e_j)=\delta_{i,2n+1-j}$. Denote by $G\coloneqq \GO(V,\psi)$ the (split) orthogonal similitude group associated with $(V,\psi)$. Let $G^\circ$ be the connected component of $G$ containing the identity element. Let $\CL$ be a self-dual periodic lattice chain of $V$ in the sense of \cite[Chapter 3]{rapoport1996period}. For each $\Lambda\in\CL$, we let $$\Lambda^\psi\coloneqq \cbra{x\in V\ |\ \psi(x,\Lambda)\sset \CO } $$ denote the dual lattice of $\Lambda$ with respect to $\psi$.

\begin{defn}[{cf. \cite{rapoport1996period}}] \label{defn-naive}
    The naive local model $\RM^\naive_\CL$ is the functor \[\Sch_{/\CO}^\op\ra \Sets \] sending a scheme $S$ over $\CO$ to the set of $\CO_S$-modules $(\CF_\Lambda)_{\Lambda\in \CL}$ such that 
    \begin{itemize}
        \item [LM1.] for any $\Lambda\in \CL$, Zariski locally on $S$, $\CF_\Lambda$ is a direct summand of $\Lambda_S\coloneqq \Lambda\otimes_\CO\CO_S$ of rank $n$;
        \item[LM2.] for any $\Lambda\in \CL$, the perfect pairing \[\psi\otimes 1\colon \Lambda_S\times \Lambda^\psi_S \ra \CO_S \] induced by $\psi$ satisfies $(\psi\otimes 1)(\CF_\Lambda, \CF_{\Lambda^\psi})=0$;
        \item[LM3.] for any inclusion $\Lambda\sset \Lambda'$ in $\CL$, the natural map $\Lambda_S\ra \Lambda'_S$ induced by $\Lambda\hookrightarrow \Lambda'$ sends $\CF_\Lambda$ to $\CF_{\Lambda'}$; and the isomorphism $\Lambda_S\simto (\pi_0\Lambda)_S$ induced by $\Lambda\overset{\pi_0}{\ra}\pi_0\Lambda$ identifies $\CF_\Lambda$ with $\CF_{\pi_0\Lambda}$.
    \end{itemize}
\end{defn}
The functor $\RM^\naive_\CL$ is representable by a projective scheme over $\CO$, which we will also denote by $\RM^\naive_\CL$. We have \begin{equation*}
    \RM^\naive_\CL\otimes F \simeq \OGr(n,V), 
\end{equation*} where $\OGr(n,V)$ denotes the orthogonal Grassmannian of (maximal) totally isotropic $n$-dimensional subspaces of $V$. In particular, the generic fiber of $\RM^\naive_\CL$
has dimension $n(n-1)/2$, and consists of two connected components $\OGr(n,V)_\pm$, which are isomorphic to each other. We can identify each connected component $\OGr(n,V)_\pm$ with the flag variety $G^\circ/P_{\mu_\pm}$, where $P_{\mu_\pm}$ denotes the parabolic subgroup associated to a minuscule cocharacter $\mu_\pm$ given by \begin{flalign}
    \mu_+\coloneqq (1^{(n)},0^{(n)}) \text{\ and\ } \mu_-\coloneqq (1^{(n-1)},0,1,0^{(n-1)}). \label{cochar}
\end{flalign}

In \cite[\S 7]{pappas2009local}, Pappas and Rapoport defined an involution $a\colon \wedge^{n}_FV\ra \wedge^n_FV$ inducing a decomposition  \begin{flalign}
	\wedge^{n}_FV= W_+\oplus W_-, \label{Wdecomp}
\end{flalign} where $W_\pm$ denotes the $\pm 1$-eigenspace for $a$. For an $\CO$-lattice $\Lambda$ in $V$, set \[(\wedge^{n}_\CO\Lambda)_\pm \coloneqq \wedge^{n}_\CO\Lambda\cap W_\pm.  \]
Then we define a refinement of $\RM^\naive_\CL$ as follows, see \cite[\S 2.3]{smithling2011topological}.


\begin{defn}\label{defn-spin}
    The spin local model $\RM^\pm_\CL$ is the functor \[\Sch_{/\CO}^\op\ra \Sets \] sending a scheme $S$ over $\CO$ to the set of $\CO_S$-modules $(\CF_\Lambda)_{\Lambda\in \CL}$, which satisfies LM1-3 and the condition 
    \begin{itemize}
        \item[LM4$\pm$.] for any $\Lambda\in\CL$, Zariski locally on $S$,  the line $\bigwedge_{\CO_{S}}^{n} \mathcal{F}_{\Lambda}$ is contained in
    \[
    \operatorname{Im}\left[ \left( \wedge_{\CO}^{n} \Lambda \right)_{\pm} \otimes_{\CO} \CO_{S} \longrightarrow \wedge_{\CO_{S}}^{n} \Lambda_{S} \right].
    \]
    \end{itemize}
\end{defn}

The functor $\RM^\pm_\CL$ is representable by a closed subscheme of $\RM^\naive_\CL$ over $\CO$, which we will also denote by $\RM^\pm_\CL$. Moreover, we have (cf. \cite[8.2.1]{pappas2009local}) \[\RM^\pm_\CL\otimes F\simeq \OGr(n,V)_\pm. \]

\begin{defn}\label{Mlocdefin}
    Denote by $\RM^{\pm\loc}_\CL$ the schematic closure of $\RM^\pm_\CL\otimes F$ in $\RM^\pm_\CL$.
\end{defn}

\begin{conj}[{cf. \cite[Conjecture 8.1]{pappas2009local}}] \label{introconj-pr}
    The spin local model $\RM^\pm_\CL$ is $\CO$-flat. Equivalently, we have $\RM^\pm_\CL=\RM^{\pm\loc}_\CL$.
\end{conj}

\subsection{Main results}
In the present paper, we prove the Pappas--Rapoport flatness conjecture for spin local models attached to split even orthogonal groups, with arbitrary parahoric level structure. 

\begin{thm}[{\S \ref{spinflat}}] \label{mainthm}
	Let $\CL$ be any periodic self-dual lattice chain. 
	Then the spin local model $\RM^\pm_\CL$ is $\CO$-flat, i.e., Conjecture \ref{introconj-pr} is true.
\end{thm}


\begin{remark}
	Note that Conjecture \ref{introconj-pr} can be verified by hand for $n\leq 3$. Following \cite{yangtopflat} (in order to treat the Bruhat--Tits-theoretic aspects of $\GO_{2n}$ uniformly), we assume $n\geq 4$ throughout the paper. 
\end{remark}

Denote by \begin{flalign}
	\sG\coloneqq \sG_{\CL}  \label{Gpara}
\end{flalign}  
the (affine smooth) group scheme of similitude automorphisms of $\CL$.  The neutral component $\sG^\circ$ is a parahoric group scheme for $G^\circ$ in the sense of Bruhat--Tits. 
Given $\sG^\circ$ and the cocharacter $\mu_\pm$ in \eqref{cochar}, one has the \dfn{canonical local model} $\RM^\loc_{\sG^\circ,\mu_\pm}$, the schematic local model representing the corresponding v-sheaf local model in the sense of Scholze--Weinstein \cite[\S 21.4]{scholze2020berkeley}. As a corollary of Theorem \ref{mainthm}, we have the following.

\begin{corollary}[{cf. Theorem \ref{yangthm}}]
    The canonical local model $\RM^\loc_{\sG^\circ,\mu_\pm}$ admits an explicit moduli interpretation given by $\RM^\pm_\CL$.
\end{corollary}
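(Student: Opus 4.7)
The plan is to combine the flatness provided by Theorem~\ref{mainthm} with the standard characterization of the Scholze--Weinstein schematic local model as the flat closure of its generic Schubert variety inside an appropriate affine flag variety. First, Theorem~\ref{mainthm} asserts that $\RM^\pm_\CL$ is $\CO$-flat under hypothesis (i) or (ii); by Definition~\ref{Mlocdefin} this is precisely the statement $\RM^\pm_\CL = \RM^{\pm\loc}_\CL$. It therefore remains to identify $\RM^{\pm\loc}_\CL$ with $\RM^\loc_{\sG^\circ,\mu_\pm}$.

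Using the moduli descriptions of Definitions~\ref{defn-naive}--\ref{defn-spin}, I would construct a canonical $\sG^\circ$-equivariant closed immersion
\[
	\iota\colon \RM^\pm_\CL \hookrightarrow \sFl_{\sG^\circ}
\]
into the partial affine flag variety attached to the parahoric group scheme $\sG^\circ$. Conditions LM1--LM3 encode a point of $\sFl_{\sG^\circ}$ via the lattice-chain modification $\Lambda\mapsto \pi\Lambda + \CF_\Lambda$, while LM4$\pm$ together with the self-duality LM2 isolates the component corresponding to the minuscule cocharacter $\mu_\pm$. On the generic fiber, $\iota$ therefore identifies $\OGr(n,V)_\pm$ with the closed minuscule Schubert variety for $\mu_\pm$ inside $\sFl_{\sG^\circ}\otimes F$. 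By the definition of $\RM^\loc_{\sG^\circ,\mu_\pm}$ in \cite[\S 21.4]{scholze2020berkeley}, together with its representability by a flat projective $\CO$-scheme in the present setting, $\RM^\loc_{\sG^\circ,\mu_\pm}$ is exactly the schematic closure of that same Schubert variety inside $\sFl_{\sG^\circ}$. Since $\iota$ realizes $\RM^{\pm\loc}_\CL$ as the schematic closure of the same generic fiber in the same ambient space, the two closed subschemes coincide, yielding the desired $\sG^\circ$-equivariant isomorphism $\RM^\pm_\CL \simeq \RM^\loc_{\sG^\circ,\mu_\pm}$.

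The principal obstacle is the construction of $\iota$ and the verification that the spin condition LM4$\pm$ selects the correct minuscule Schubert component on the generic fiber. Because the decomposition \eqref{Wdecomp} hinges on the involution $a$ from \cite[\S 7]{pappas2009local}, one must carefully track how $a$ interacts with the $G^\circ$-action on $\wedge^n_F V$, thereby pinning down the sign in $\mu_\pm$ and distinguishing the two connected components $\OGr(n,V)_\pm$. Once this structural identification is in place, the corollary follows formally from the flatness input of Theorem~\ref{mainthm}.
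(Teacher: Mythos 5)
Your first step is exactly the paper's: under hypothesis (i) or (ii), Theorem~\ref{mainthm} gives $\CO$-flatness of $\RM^\pm_\CL$, which by Definition~\ref{Mlocdefin} and the equivalence in Conjecture~\ref{introconj-pr} is precisely $\RM^\pm_\CL=\RM^{\pm\loc}_\CL$. At that point the paper simply invokes Theorem~\ref{yangthm}~(1), which already provides the isomorphism $\RM^{\pm\loc}_\CL\simeq\RM^\loc_{\sG^\circ,\mu_\pm}$ as a cited result from \cite[Proposition 3.6]{yangtopflat}. The corollary header ``cf.\ Theorem~\ref{yangthm}'' is signalling exactly this; the proof is a two-line concatenation of those two facts.

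Your second half instead attempts to re-derive Theorem~\ref{yangthm}~(1) from scratch, and here there is a genuine gap. You assert that ``by the definition of $\RM^\loc_{\sG^\circ,\mu_\pm}$ in \cite[\S 21.4]{scholze2020berkeley}, together with its representability by a flat projective $\CO$-scheme, $\RM^\loc_{\sG^\circ,\mu_\pm}$ is exactly the schematic closure of that same Schubert variety inside $\sFl_{\sG^\circ}$.'' This is not definitional. The Scholze--Weinstein local model is a v-sheaf over $\Spd\CO$; its representability by a scheme and its comparison with any Pappas--Zhu or ``lattice-chain modification'' style schematic closure inside a mixed-characteristic affine flag variety are theorems (precisely what \cite[Proposition 3.6]{yangtopflat} supplies in the present setting). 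You also implicitly conflate the mixed-characteristic object $\sFl_{\sG^\circ}$ needed for your map $\iota$ with the equal-characteristic $LG^{\flat\circ}/L^+\sG^{\flat\circ}_I$ which the paper only uses to stratify the special fiber. If you wish to argue the identification directly rather than cite Theorem~\ref{yangthm}~(1), you must (a) work in a Beilinson--Drinfeld style affine Grassmannian over $\CO$, (b) prove the v-sheaf local model agrees with the schematic closure there, and (c) then match that closure with $\RM^{\pm\loc}_\CL$ --- none of which follows from the cited definitions alone. As it stands, the passage ``is exactly the schematic closure'' is the missing theorem, not a corollary of the setup.
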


Theorem \ref{mainthm} has an application to PEL moduli spaces of type $D$. Let $(\bB,*,\bV,\pair{\ ,\ },\bh)$ be a PEL-datum of type $D$ as in \S \ref{sec-application}. It determines a reductive group $\bG$ over $\BQ$, whose base change to $\ol{\BQ}$ is isomorphic to $\GO_{2n,\ol{\BQ}}$. Suppose that $\bG_{\BQ_p}\simeq \GO_{2n,\BQ_p}$ is split. Then we can choose an open compact subgroup $\bK_p\sset \bG(\BQ_p)$ equal to $\sG_{\CL}(\BZ_p)$ for a self-dual chain $\CL$ (here $\CO=\BZ_p$ and $\pi_0=p$). Set $\bK=\bK_p\bK^p\sset \bG(\BA_f)$, where $\bK^p$ is a sufficiently small open compact subgroup of $\bG(\BA_f^p)$. 

We define a moduli functor $\CA_\bK^\pm$ of type $D$ parametrizing families of quadruples $$(A_\Lambda,\iota_\Lambda,\ol{\lambda}_\Lambda,\ol{\eta}_\Lambda)_{\Lambda\in \CL}$$ of abelian schemes with additional PEL data, subject to the spin condition (see \S \ref{sec-application} for more details). Then $\CA_\bK^\pm$ is representable by a quasi-projective scheme over $\BZ_p$, and the Shimura variety attached to the above PEL-datum with level $\bK$ is an open and closed subscheme of the generic fiber $\CA_{\bK}^\pm\otimes\BQ_p$.   Using the local model diagram, we conclude that $\CA^\pm_\bK$ is \etale locally isomorphic to the spin local model $\RM^\pm_\CL$.
\begin{corollary}[{Corollary \ref{coro-AKflat}}] \label{introcoro18}
    The (integral) moduli space $\CA^\pm_\bK$ of type $D$ is normal, Cohen--Macaulay, and flat over $\BZ_p$.
\end{corollary}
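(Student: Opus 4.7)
The plan is to transport properties along the standard local model diagram and reduce the corollary to a fiberwise statement for $\RM^\pm_\CL$. By the usual construction (cf.\ \cite{rapoport1996period}), attached to $\CA^\pm_\bK$ one has a diagram
\[
\CA^\pm_\bK \xleftarrow{\;\tilde\pi\;} \widetilde{\CA}^\pm_\bK \xrightarrow{\;\varphi\;} \RM^\pm_\CL,
\]
in which $\tilde\pi$ is a $\sG$-torsor and $\varphi$ is smooth and $\sG$-equivariant; consequently $\CA^\pm_\bK$ and $\RM^\pm_\CL$ are \etale-locally isomorphic. Since flatness, reducedness, normality, and Cohen--Macaulayness are all \etale-local on source and target, I would verify each of them for $\RM^\pm_\CL$ and then transfer back.

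Next, I would combine Theorem \ref{mainthm} (giving $\CO$-flatness and reducedness of the special fiber of $\RM^\pm_\CL$) with two inputs about the fibers. First, the generic fiber $\RM^\pm_\CL\otimes F \simeq \OGr(n,V)_\pm \simeq G^\circ/P_{\mu_\pm}$ is smooth, hence normal and Cohen--Macaulay. Second, because flatness forces $\RM^\pm_\CL = \RM^{\pm\loc}_\CL$, the special fiber is cut out as the $\mu_\pm$-admissible locus in the affine partial flag variety $\sFl_{\sG^\circ}$, and its underlying reduced structure is a finite union of affine Schubert varieties indexed by $\Adm(\mu_\pm)$; the structural results on such admissible unions (Pappas--Zhu, Zhu's coherence conjecture) show them to be normal and Cohen--Macaulay in residue characteristic $p>2$. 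Together with reducedness from Theorem \ref{mainthm}, this identifies the special fiber of $\RM^\pm_\CL$ as reduced, normal, and Cohen--Macaulay.

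The last step is to lift these fiberwise properties to $\RM^\pm_\CL$ itself. Cohen--Macaulayness follows from the standard fact that a flat morphism over a DVR with Cohen--Macaulay fibers has a Cohen--Macaulay total space (via the depth/dimension formulas). For normality I would apply Serre's criterion $R_1+S_2$: the $S_2$ condition is subsumed by Cohen--Macaulayness, while $R_1$ holds because every codimension-one point of $\RM^\pm_\CL$ is either a codimension-one point of the smooth generic fiber, or a generic point of the reduced special fiber whose local ring is a DVR by flatness. Transferring the resulting properties through the local model diagram yields the claim for $\CA^\pm_\bK$.

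The genuine obstacle is of course Theorem \ref{mainthm} itself; once flatness together with reducedness of the special fiber is available for $\RM^\pm_\CL$, the remainder of the argument is a formal combination of the local model diagram, well-known criteria over a DVR, and established geometric properties of affine Schubert varieties for the parahoric $\sG^\circ$.
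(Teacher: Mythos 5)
Your overall scaffolding — transport via the local model diagram, then establish the properties for $\RM^\pm_\CL$ — matches the paper's, but the way you obtain normality and Cohen--Macaulayness diverges. The paper is more economical: Theorem \ref{mainthm} yields $\RM^\pm_\CL = \RM^{\pm\loc}_\CL$, and Theorem \ref{yangthm}(1) already records that $\RM^{\pm\loc}_\CL \simeq \RM^\loc_{\sG^\circ_\CL,\mu_\pm}$ is normal, Cohen--Macaulay, $\CO$-flat with reduced special fiber, citing this structure theory of schematic local models as a black box; the local model diagram then transfers everything at once. You instead re-derive normality and CM of $\RM^\pm_\CL$ by a fiberwise argument and a lifting step. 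Your lifting logic is sound: flat with CM fibers gives CM total space, and Serre's criterion works because $S_2$ follows from CM while $R_1$ is verified at the two kinds of codimension-one points (codim-one points of the smooth generic fiber, and generic points of the reduced special fiber, whose local rings are DVRs by flatness plus reducedness). The trade-off is that you must separately input CM-ness of the admissible Schubert union, which is itself a deep result, so you are not really avoiding the black box, only repackaging it.

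One claim in your second paragraph is false and should be removed: the $\mu_\pm$-admissible union of affine Schubert varieties is reduced and Cohen--Macaulay, but it is \emph{not} normal in general. By Proposition \ref{prop-stratificationMpm}(3), for $0 < i < n$ the special fiber $\RM^\pm_{i,k}$ has two top-dimensional Schubert cells whose closures intersect along lower strata; a reducible variety whose components meet in codimension one is never normal. Fortunately your Serre-criterion argument for the normality of the \emph{total} $\CO$-scheme never uses normality of the special fiber — it only uses reducedness (for the DVR step) and CM (for $S_2$) — so deleting the erroneous normality claim leaves the argument intact. You should also make the CM claim for the special fiber a cited input rather than an aside, since a union of CM varieties need not be CM, and this step really does require the Frobenius-splitting / coherence-conjecture machinery (e.g., [FHLR25]).
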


\subsection{}
Let us now outline the proof of Theorem \ref{mainthm}. Recall that in \cite{yangtopflat}, we prove that the spin local model is topologically flat (see Theorem \ref{yangthm}). It follows that, to prove Conjecture \ref{introconj-pr}, it suffices to show that the special fiber $\RM^\pm_{\CL,k}$ is reduced.

\begin{defn}[{Definition \ref{defn-stdchain}}]
    For $0\leq i\leq n$, set \begin{flalign*}
		\Lambda_i\coloneqq \CO\pair{\pi_0\inverse e_1,\ldots,\pi_0\inverse e_i,e_{i+1}, \ldots, e_{2n} }. 
	\end{flalign*}
	For any non-empty subset $I\sset [0,n]$ (notation as in \S \ref{subsec-notation-intro}), define the associated standard self-dual lattice chain via $$\Lambda_I\coloneqq \cbra{\Lambda_\ell}_{\ell\in 2n\BZ\pm I},$$ where $\Lambda_{-i}\coloneqq \Lambda_i^\psi$ and $\Lambda_\ell\coloneqq \pi_0^{-d}\Lambda_{\pm i}$, for $i\in I$ and $\ell=2nd\pm i, d\in\BZ$.
\end{defn}
By the classification (Theorem \ref{introthm-conjpara}) of $G^\circ(F)$-conjugation classes of parahoric subgroups, we may assume that $\CL=\Lambda_I$ for some $I\sset[0,n]$. Using a similar idea of \Gortz \cite[\S 4.5]{gortz2001flatness}, we reduce to the case $I=\cbra{i}$ is a singleton, which we call the \dfn{pseudo-maximal case}. We write $\RM_i^\naive$ (resp. $\RM^\pm_i$) for $\RM^\naive_{\cbra{i}}$ (resp. $\RM^\pm_{\cbra{i}})$. Let $\sG_i\coloneqq \sG_{\Lambda_{\cbra{i}}}$ denote the group scheme as in \eqref{Gpara}, with neutral component $\sG^\circ_i$. Then $\sG^\circ_i$ acts on $\RM^\pm_i$. 

\begin{thm}\label{intro-chart}
	There exists an open affine subscheme $\RU_i^\pm$ of $\RM^\pm_i$ such that the following holds.
	\begin{enumerate}
		\item The $\sG^\circ_i$-translates of $\RU^\pm_i$ cover $\RM^\pm_i$;
		\item The special fiber $\RU^\pm_{i,k}$ is reduced, and is isomorphic to
			   \begin{flalign*}
         \Spec \RR^\pm_{i,k} \times_k \BA_k^{(n-2i')(n+2i'-1)/2}, \text{\ for\ } \RR^\pm_{i,k}\coloneqq \frac{k[X]}{(XH_{2i'}X^t, X^tH_{2i'}X,\wedge^{i'+1}X)+\RI^\mp},
    \end{flalign*}
    where $X$ is a $2i'\times 2i'$ matrix with $i'\coloneqq \min\cbra{i,n-i}$, $H_{2i'}$ denotes the anti-diagonal unit $2i'\times 2i'$ matrix, ($\wedge^{i'+1}X$) denotes the ideal generated by all $(i'+1)$-minors of $X$, and $\RI^\mp$ denotes the ideal generated by \begin{flalign*}
        [S:T](X)\mp \sgn(\sigma_S)\sgn(\sigma_T)[S^\perp:T^\perp](X)
    \end{flalign*}
    for all subsets $S,T\sset[1,2i']$ (in increasing order) of cardinality $i'$. (See Proposition \ref{lem-wedgeN} for notation when $2i\leq n$.)
	\end{enumerate}
\end{thm}

It is clear that Theorem \ref{intro-chart} implies that $\RM^\pm_{i,k}$ is reduced. Let us explain the proof of Theorem \ref{intro-chart}; see Proposition \ref{lem-coverworst} and Proposition \ref{lem-Rikreduced} for details.
Part (1) follows from the description of the stratification (Proposition \ref{prop-stratificationMpm}) of the special fiber $\RM^\pm_{i,k}$: we take a worst point $P_0$ in the lowest stratum, and construct $\RU^\pm_i$ by intersecting $\RM^\pm_i$ with the standard affine chart of the Grassmannian around $P_0$ (see \S \ref{worstsec}). The proof of Part (2) rests on a detailed analysis of the defining equations of $\RU^\pm_{i,k}$, together with the following theorem, whose Part (2) implies that $\RR^\pm_{i,k}$ is reduced upon setting $N=2i'$. 
\begin{thm}[{\S \ref{subsec47} - \S \ref{subsec48}}] \label{intro-2}
      Let $N\geq 1$ be an integer. Set $m\coloneqq \lfloor N/2\rfloor$. \begin{enumerate}
      	\item For $0\leq \ell\leq m$, set \begin{flalign*}
      	\sR(\ell)\coloneqq \frac{\BZ[1/2][X]}{(XH_NX^t, X^tH_NX,\wedge^{\ell+1}X)},
      \end{flalign*}
      where $X$ is an $N\times N$ matrix, $H_{N}$ denotes the anti-diagonal unit $N\times N$ matrix, ($\wedge^{\ell+1}X$) denotes the ideal generated by all $(\ell+1)$-minors of $X$.
      
      Then $\sR(\ell)$ is reduced and flat over $\BZ[1/2]$. Moreover, $\sR(m)\otimes_{\BZ[1/2]}k$ is reduced; and for $0\leq \ell<m$, $\sR(\ell)\otimes_{\BZ[1/2]}k$ is irreducible, normal and Cohen--Macaulay. 
      \item Suppose $N=2m$ is even. Set \begin{flalign*}
      	\sR^\pm\coloneqq \frac{\BZ[1/2][X]}{(XH_NX^t, X^tH_NX,\wedge^{m+1}X)+\RI^\mp},
      \end{flalign*}
       where $\RI^\mp$ is defined similarly as in Theorem \ref{intro-chart} (2).
       Then the ring $\sR^\pm$ is reduced, Cohen--Macaulay, and flat over $\BZ[1/2]$. Moreover, $\sR^\pm\otimes_{\BZ[1/2]}k$ is reduced, Cohen--Macaulay and consists of two irreducible components. 
      \end{enumerate}  
\end{thm}

Our proof of Theorem \ref{intro-2}, the most technical part of the paper, involves the representation theory of orthogonal groups over $\BC$ and a variant of Hironaka's lemma (see Proposition \ref{prop-reducedZ}). 
The theorem itself is also of independent interest. To explain its proof, let us introduce some notation (see \S \ref{subsec-tableauxbasics} for details). 

\begin{defn}\label{introdefn-tableaux}
   Let $N\geq 1$ be an integer. Define the ordered set \begin{flalign*}
    \sI\coloneqq \begin{cases}
    	\cbra{\ov{1} <1<\ov{2}<2<\cdots<\ov{m}<m} &\text{if $N=2m$;} \\ \cbra{\ov{1} <1<\ov{2}<2<\cdots<\ov{m}<m<0} &\text{if $N=2m+1$.}    
    \end{cases} 
\end{flalign*}
   \begin{enumerate}
       \item A \dfn{partition} $\lambda$ of a positive integer $d$ into $r$ parts is given by writing $d$ as a sum $d=\lambda_1+\lambda_2+\cdots+\lambda_r$ of positive integers where $\lambda_1\geq \lambda_2\geq \cdots\geq \lambda_r$. We sometimes write $\lambda=(\lambda_1\geq \lambda_2\geq \cdots\geq \lambda_r)$.
       \item A \dfn{tableau} of shape $\lambda$ (or a \dfn{$\lambda$-tableau}) is a left justified array with $r$ rows, where the $i$-th row consists of $\lambda_i$ entries from the set $\sI$. For example, \[T=\begin{pmatrix}
           \ov{1} &\ov{2}\\ \ov{2} &2\\ 2
       \end{pmatrix} \]  is a tableau of shape $\lambda=(2\geq 2\geq 1)$. 
            
            For a tableau $T$, denote by $T^i$ the $i$-th column of $T$.
       \item The \dfn{conjugate} $\lambda'$ of a partition $\lambda$ is the partition whose parts are the column lengths of any (equivalently, every) tableau of shape $\lambda$. We write $\lambda'=(\lambda_1'\geq\lambda_2' \geq\cdots\geq \lambda_{r'}')$ for some integer $r'$.
       \item A tableau $T$ is \dfn{$GL(N)$-standard} if it has at most $n$ rows, and if the entries in each column are strictly increasing downward, and the entries in each row are non-decreasing from the left to the right.
       \item A tableau $T$ is \dfn{$O(N)$-standard} if it is $GL(N)$-standard and satisfies conditions in Definition \ref{defn-onstandard} (2).
       \item A \dfn{bitableau} $[S:T]$ of shape $\lambda$ consists of two tableaux $S$ and $T$ of the same shape $\lambda$. For example, \[[S:T]=\left[\begin{matrix}
            1 &1\\ 2 &\ov{2}\\ 3
        \end{matrix}\ :\ \begin{matrix}
            \ov{1} &1\\ \ov{2} &2\\ 3
        \end{matrix} \right] \] is a bitableau of shape $(2\geq 2\geq 1)$.  
       \item A bitableau $[S:T]$ is called $O(N)$-standard if both $S$ and $T$ are $O(N)$-standard.
       \item Let $R$ be a commutative ring. Let $X$ be an $N\times N$ matrix with columns and rows indexed by $\sI$. For a bitableau \[[S:T]=\left[\begin{array}{c}
            a_1  \\ a_2\\ \vdots\\ a_r
       \end{array}: \begin{array}{c}
            b_1  \\ b_2\\ \vdots\\ b_r
       \end{array} \right] \]
       with one column, we use $[S:T](X)$ to denote the determinant of the $r\times r$ submatrix of $X$ formed by the rows of indices $a_1,\ldots, a_r$ and columns of indices $b_1,\ldots,b_r$, i.e. $$[S:T](X)\coloneqq \sum_{\tau\in S_r}\sgn(\tau)x_{a_1\tau(b_1)}\cdots x_{a_r\tau(b_r)}\in R[X], $$ where $S_r$ denotes the permutation group on the set $\cbra{b_1,\ldots,b_r}$.
       For a general bitableau $[S:T]$ of shape $\lambda$, define \[[S:T](X)\coloneqq \prod_{i=1}^{\lambda_1}[S^i:T^i](X).\]
       We say $[S:T](X)$ is the \dfn{bideterminant} in $R[X]$ attached to $[S:T]$.

       For a quotient ring $R'$ of $R[X]$ and a bitableau $[S:T]$, the bideterminant $[S:T](X)$ maps to an element in $R'$, which is still denoted by $[S:T](X)$ (or simply $[S:T]$).
   \end{enumerate}
\end{defn}

The combinatorics of partitions and tableaux play an important role in the representation theory of classical groups. For example, finite dimensional irreducible representations of complex orthogonal groups $\On$ are  naturally indexed by partitions $\lambda$ with $\lambda_1'+\lambda_2'\leq N$. Moreover, King--Welsh \cite{king1992construction} constructed an explicit $\BC$-basis for each irreducible $\On$-representation in terms of $O(N)$-standard tableaux.

We first explain how to show the following part of Theorem \ref{intro-2}: for $0\leq \ell\leq m$, the ring \begin{flalign*}
    \sR(\ell)=  \frac{\BZ[1/2][X]}{(XHX^t,X^tHX,\wedge^{\ell+1}X)},\  H\coloneqq H_N,
\end{flalign*}
is reduced and flat over $\BZ[1/2]$. The same strategy, with some modifications, yields the reducedness and flatness of $\sR^\pm$.  
Denote \begin{flalign*}
    J\coloneqq J_N= \begin{pmatrix}
0 & 1 &        &        &        &        \\
1 & 0 &        &        &        &        \\
  &   & 0 & 1  &        &        \\
  &   & 1 & 0  &        &       \\
  &   &   &    & \ddots &       \\
  &   &   &    &  &0 & 1   \\
  &   &   &    &  &1 & 0   
\end{pmatrix},
\end{flalign*}
which is an $N\times N$ matrix consisting of $m$ diagonal blocks of $\begin{psmallmatrix}
    0 &1\\ 1 &0
\end{psmallmatrix}$. 
Then $\sR(\ell)$ is isomorphic to (Lemma \ref{twoRisom})  \begin{flalign*}
    \CR(\ell)\coloneqq {}_J\CR(\ell)=\frac{\BZ[1/2][X]}{(XJX^t,X^tJX,\wedge^{\ell+1}X)}.
\end{flalign*}

Denote by $\CR(\ell)_{\red}$ the associated reduced ring of $\CR(\ell)$. Adapting the straightening method of \cite{cliff2008basis}, we find generators for $\CR(\ell)$ (and hence for $\CR(\ell)_{\red}$) as a $\BZ[1/2]$-module via an explicit set of $O(N)$-standard bideterminants (see Definition \ref{defn-onstandard}). More precisely, there exists a surjection of $\BZ[1/2]$-modules \begin{flalign*}
    \varphi\colon \bigoplus_{[S:T]\in\CC(\ell)} \BZ[1/2]e_{[S,T]}\twoheadrightarrow \CR(\ell)\twoheadrightarrow\CR(\ell)_{\red},
\end{flalign*}
where $\tcbra{e_{[S,T]}}$ are basis elements indexed by \begin{flalign*}
    \CC(\ell)\coloneqq \cbra{O(N)\text{-standard bitableaux } [S:T]\ \vline\ \Centerstack{\text{the length of}\\ \text{the first column of $S$ is $\leq \ell$}} };
\end{flalign*}  see Corollary \ref{coro-span}. The image of each $e_{[S,T]}$ is the bideterminant $[S:T](X)$ in $\CR(\ell)$ or $\CR(\ell)_{\red}$. Thus, to show that $\CR(\ell)=\CR(\ell)_{\red}$ and that $\CR(\ell)$ is $\BZ[1/2]$-flat, it is enough to prove that $\ker(\varphi)\otimes_{\BZ[1/2]}\BC=0$. Namely, we need to show that the set $\varphi(\CC(\ell))$ is $\BC$-linearly independent in $\CR(\ell)_{\red,\BC}$. 

Denote by $\On$ the orthogonal group over $\BC$ attached to the symmetric matrix $J$. Set $O(N)\coloneqq \On(\BC)$. Note that the $\BC$-algebra $\CR(\ell)_{\BC}$ and $\CR(\ell)_{\red,\BC}$ carry a natural $O(N)$-bimodule structure via left/right multiplication on the matrix variable $X$.
Let $\wt{<}$ be a total order on the set of partitions. For each partition $\lambda$, we write $$A(\wt{\leq}\lambda)\sset \CR(\ell)_{\red,\BC} \ (\resp A(\wt{<}\lambda))$$ for the $\BC$-span of bideterminants in $\CC(\ell)$ of shape $\wt{\leq}\lambda$ (resp. $\wt{<}\lambda$). Then $A(\wt{\leq}\lambda)$ and $A(\wt{<}\lambda)$ are both $O(N)$-stable subspaces of $\CR(\ell)_{\red,\BC}$.

\begin{prop}[{Lemma \ref{lem-phiiso}, Corollary \ref{coro-Cbasis}}] \label{introprop}
    There exists a total order $\wt{<}$ such that, for every partition $\lambda$, there is an isomorphism of $O(N)$-bimodules 
    \begin{flalign*}
        \Phi(\lambda)\colon L(\lambda)\simto A(\wt{\leq} \lambda)/A(\wt{<}\lambda),
    \end{flalign*}
    where $L(\lambda)$ is an irreducible $O(N)$-bimodule, and the image of $\Phi(\lambda)$ has a $\BC$-basis consisting of bideterminants in $\CC(\ell)$ of shape $\lambda$.
\end{prop}

Now suppose that we have a linear relation in $\CR(\ell)_{\red,\BC}$: \begin{flalign*}
    c_1[S_1:T_1]+\cdots+ c_r[S_r:T_r] =0,
\end{flalign*}
where $c_i\in \BC$ and $[S_i:T_i]\in \CC(\ell)$. Then we choose a sufficiently large $\lambda$ with respect to the total order $\wt{<}$ in Proposition \ref{introprop} such that $[S_i:T_i]\in A(\wt{\leq}\lambda)$ for $1\leq i\leq r$. Since the images of $[S_i:T_i]$ in $A(\wt{\leq} \lambda)/A(\wt{<}\lambda)$ are linearly independent by Proposition \ref{introprop}, we obtain that $c_i=0$ for $1\leq i\leq r$. This shows that $\ker(\varphi_\BC)=0$, and hence, $\CR(\ell)$ is reduced and $\BZ[1/2]$-flat; cf. the proof of Corollary \ref{coro-Cbasis}.

As a byproduct of Proposition \ref{introprop}, we obtain a new basis of an irreducible $O(N)$-representation in terms of bideterminants. This is an orthogonal analogue of the result in \cite{de1979symplectic} for symplectic groups, and is of independent interest. 

To complete the proof of Theorem \ref{intro-2}, we proceed in the following steps. 
\begin{enumerate}
	\item We prove that the ring $\CR(\ell)$ (resp. $\CR^\pm$) is generically smooth of relative dimension $\ell(2N-2\ell-1)$ (resp. $m(2m-1)$) over $\BZ[1/2]$; see Proposition \ref{prop-Zlreduced} and Proposition \ref{prop-zjgensm}.
	\item We prove that $\CR(\ell)\otimes k$ for $0\leq\ell<m$ is irreducible and that  its reduction is a normal and Cohen--Macaulay domain. We also show that $\CR^\pm\otimes k$ consists of two irreducible components whose intersection is irreducible of codimension one. Moreover, these two irreducible components and their intersection are Cohen--Macaulay; see Proposition \ref{prop-zsred}. The Cohen--Macaulayness is deduced by relating these schemes to affine Schubert varieties. 
	\item We conclude that $\CR(\ell)\otimes k$ and $\CR^\pm\otimes k$ are reduced by applying a variant of Hironaka's lemma; see Proposition \ref{prop-reducedZ} and Corollary \ref{prop-Zk+red}. Some ideas in this step are inspired by \cite{yzz26}.
\end{enumerate}

\subsection{Organization}
We now give an overview of the paper. 

In \S \ref{sec-topoflat}, we review results in \cite{yangtopflat} that are used in this paper. The topological flatness (Theorem \ref{yangthm} and Corollary \ref{coro-ifreduced}) of the spin local models $\RM^\pm_I$ allows us to reduce Conjecture \ref{introconj-pr} to proving that the special fiber $\RM^\pm_{I,k}$ is reduced.  

In \S \ref{sec-coordrings}, we focus on the case $I=\cbra{i}$ is a singleton. By results in \S \ref{sec-topoflat}, we obtain an explicit stratification of $\RM^\pm_{i,k}$ (Proposition \ref{prop-stratificationMpm}), compatible with the decomposition into affine Schubert cells. Let $P_0$ (see \eqref{P0}) denote a closed point in the unique closed stratum of $\RM^\pm_{i,k}$. We call $P_0$ a \dfn{worst point} of $\RM^\pm_{i,k}$. We then construct an affine open subscheme $\RU^\pm_i\sset \RM^\pm_i$ containing the worst point, and show that $\RU^\pm_{i,k}$ is a closed subscheme of a product of $\Spec \RR^\pm_{i,k}$ and an affine space, where $\RR^\pm_{i,k}$ is an explicitly matrix-defined ring. 

In \S \ref{sec-flatness}, we prove our main technical result (Theorem \ref{intro-2}). Using straightening methods in \cite{cliff2008basis} and the representation theory of complex orthogonal groups in \cite{king1992construction}, we prove a class of rings defined by matrix equations (including $\RR^\pm_{i,k}$) is reduced. The proof relies on a variant of Hironaka's lemma, which we prove in Proposition \ref{prop-reducedZ}. As an application, we deduce that $\RU^\pm_{i,k}$ (and hence $\RM^\pm_{i,k}$) is reduced. Along the way, we also obtain an explicit $k$-basis of $\RR^\pm_{i,k}$ in terms of $O(N)$-standard bideterminants. 

In \S \ref{spinflat}, we prove the flatness result Theorem \ref{mainthm}. We first verify that the closed immersion $\RU^\pm_{i,k}\hookrightarrow \Spec \RR^\pm_{i,k}$ is in fact an isomorphism, using that $\RR^\pm_{i,k}$ is reduced. By Theorem \ref{intro-2}, we conclude Theorem \ref{mainthm} in the pseudo-maximal case $I=\cbra{i}$. By \Gortz's argument in \cite[\S 4.5]{gortz2001flatness}, this implies Theorem \ref{mainthm} for the general parahoric case.      

Finally in \S \ref{sec-application}, we apply Theorem \ref{mainthm} to moduli spaces of type $D$. Starting from the construction of Rapoport--Zink in \cite{rapoport1996period}, we impose the spin condition to modify the moduli problem. Using the local model diagram, we prove that the modified moduli space is flat.

 \subsection*{Acknowledgements}
It is a pleasure to thank M. Rapoport and Y. Luo for sharing an unpublished note of B. Smithling. Some of the ideas developed here are due to Smithling. I am also grateful to Y. Luo, G. Pappas, M. Rapoport, I. Zachos and Z. Zhao for their comments and suggestions, and to X. Shen for pointing out the reference \cite{yu2021reduction}. This work is partially supported by the Shuimu Tsinghua Scholar Program of Tsinghua University.

\subsection{Notation} \label{subsec-notation-intro}
Throughout the paper, $(F,\CO,\pi_0, k)$ denotes a complete discretely valued field with ring of integers $\CO$ , uniformizer $\pi_0$, and residue field $k$ of characteristic $p>2$. Denote by $\ol{k}$ an algebraic closure of $k$. 

For integers $n_2\geq n_1$, denote $[n_1,n_2]\coloneqq \cbra{n_1,\ldots,n_2}$. For $i\in [1,2n]$, define $i^*\coloneqq 2n+1-i$.  For a subset $E\sset [1,2n]$ of cardinality $n$, set $E^*\coloneqq \cbra{i^*\ |\ i\in E}$ and $E^\perp\coloneqq (E^*)^c$ (the complement of $E^*$ in $[1,2n]$). Denote by $\Sigma E$ the sum of the entries in $E$. 

The expression $(a^{(r)},b^{(s)},\ldots)$ denotes the tuple with $a$ repeated $r$ times, followed by $b$ repeated $s$ times, and so on. 

For any real number $x$, denote by $\lfloor x\rfloor$ (resp. $\lceil x\rceil$) for the greatest (resp. smallest) integer $\leq x$ (resp. $\geq x$).

For a ring $R$ and a matrix $X$, we write $R[X]$ for the polynomial ring over $R$ whose variables are entries of $X$.

For a scheme $\CX$ over $\CO$, we let $\CX_k$ or $\CX\otimes k$ denote the special fiber $\CX\otimes_\CO k$, and $\CX_F$ or $\CX\otimes F$ denote the generic fiber $\CX\otimes_\CO F$.

\section{Preliminaries}\label{sec-topoflat}
In this section, we review the main results proven in \cite{yangtopflat}, which reduce the proof of Conjecture \ref{introconj-pr} to showing that the special fiber $\RM^\pm_{\CL,k}$ is reduced. We follow the notation in \S \ref{subsec-notation-intro}.


\begin{defn}\label{defn-stdchain}
    For $0\leq i\leq n$, set \begin{flalign*}
		\Lambda_i\coloneqq \CO\pair{\pi_0\inverse e_1,\ldots,\pi_0\inverse e_i,e_{i+1}, \ldots, e_{2n} }. 
	\end{flalign*} 
	For any non-empty subset $I\sset [0,n]$, define the associated standard self-dual lattice chain via $$\Lambda_I\coloneqq \cbra{\Lambda_\ell}_{\ell\in 2n\BZ\pm I},$$ where $\Lambda_{-i}\coloneqq \Lambda_i^\psi$ and $\Lambda_\ell\coloneqq \pi_0^{-d}\Lambda_{\pm i}$, for $i\in I$ and $\ell=2nd\pm i, d\in\BZ$.
\end{defn}
	
\begin{thm}[{\cite[Theorem 1.2]{yangtopflat}}] \label{introthm-conjpara}
	Let $I\sset [0,n]$ be non-empty. 
	Then the subgroup \begin{flalign*}
		P_I^\circ\coloneqq \cbra{g\in G(F)\ |\ g\Lambda_i=\Lambda_i, i\in I} \cap\ker\kappa,
	\end{flalign*}
	where $\kappa\colon G^\circ(F)\twoheadrightarrow \pi_1(G)$ denotes the Kottwitz map, is a parahoric subgroup of $G(F)$. 
	
	Furthermore, any parahoric subgroup of $G(F)$ is $G^\circ(F)$-conjugate to a subgroup $P_I^\circ$ for some (not necessarily unique) $I\sset [0,n]$. In particular, any self-dual lattice chain $\CL$ is $G^\circ(F)$-conjugate to the standard self-dual lattice chain $\Lambda_I$ for some $I\sset [0,n]$. 
\end{thm}
If $\CL=g\Lambda_I$ for some $g\in G^\circ(F)$, then we have $\RM^\naive_\CL\simeq \RM^\pm_{\Lambda_I}$ and $\RM^\pm_\CL\simeq \RM^\pm_{\Lambda_I}$ (see \cite[Remark 3.3]{yangtopflat}). 
If furthermore $I=\cbra{i}\sset [0,n]$, then we say $\CL$ is \dfn{pseudo-maximal}. Set \begin{flalign*}
	\RM_I^\naive\coloneqq \RM^\naive_{\Lambda_I} \text{\ and\ } \RM^\pm_I\coloneqq \RM^\pm_{\Lambda_I}.
\end{flalign*} We also write $\RM^\naive_i$ (resp. $\RM^\pm_i$) for $\RM_{\cbra{i}}^\naive$ (resp. $\RM^\pm_{\cbra{i}}$).

For an $\CO$-algebra $R$, we will use $(\CF_\ell)_{\ell\in 2n\BZ\pm I}$, where $\CF_\ell\sset \Lambda_\ell\otimes_\CO R$, to denote an $R$-point of $\RM^\naive_I$ or $\RM^\pm_I$.

Conjecture \ref{introconj-pr} is then equivalent to the following.
\begin{conj}\label{conjPR1}
	For any non-empty subset $I\sset[0,n]$, the spin local model $\RM_I^\pm$ is flat over $\CO$. 
\end{conj}

Let $\sG_I$ be the group scheme of similitude automorphisms of $\Lambda_I$. Then $\sG^\circ_I$ is the parahoric group scheme with $\sG_I^\circ(\CO)=P_I^\circ$.  
 Recall that given $\sG^\circ_I$ and the minuscule cocharacter $\mu_\pm$ in \eqref{cochar}, the associated \dfn{schematic local model} $\RM^\loc_{\sG^\circ,\mu_\pm}$ represents the corresponding v-sheaf local model in the sense of Scholze--Weinstein \cite[\S 21.4]{scholze2020berkeley}. Let $\RM^{\pm\loc}_I$ denote the schematic closure of $\RM^\pm_{I,F}$ in $\RM^\pm_I$. 
 
\begin{thm}\label{yangthm}
 	Let $I\sset [0,n]$ be non-empty. \begin{enumerate}
 		\item (\cite[Proposition 3.6]{yangtopflat}) The scheme $\RM^{\pm\loc}_I$ is isomorphic to $\RM^\loc_{\sG^\circ_I,\mu_\pm}$. In particular, $\RM^{\pm\loc}_I$ is $\CO$-flat of relative dimension $n(n-1)/2$, normal, and Cohen--Macaulay with reduced special fiber. 
 		\item (\cite[Theorem 1.3]{yangtopflat}) The spin local model $\RM^\pm_I$ is topologically flat over $\CO$, i.e., the generic fiber $\RM_{I,F}^\pm$ is dense in $\RM^\pm_I$ (with respect to the Zariski topology).
 	\end{enumerate} 
\end{thm}

\begin{corollary}\label{coro-ifreduced}
	If the special fiber $\RM^\pm_{I,k}$ is reduced, then Conjecture \ref{introconj-pr} is true.
\end{corollary}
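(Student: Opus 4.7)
The strategy is to show that the closed immersion $\iota\colon \RM^{\pm\loc}_I \hookrightarrow \RM^\pm_I$ is an isomorphism of schemes; then the flatness of $\RM^\pm_I$ follows immediately from the flatness of $\RM^{\pm\loc}_I$ established in Theorem \ref{yangthm}(1), confirming Conjecture \ref{introconj-pr}.

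First I would check that $\iota$ is an isomorphism on special fibers. By Theorem \ref{yangthm}(2), the generic fiber $\RM^\pm_{I,F}$ is topologically dense in $\RM^\pm_I$, so $\RM^\pm_I$ and $\RM^{\pm\loc}_I$ share the same underlying topological space; intersecting with $V(\pi)$, the same is true for the special fibers $\RM^\pm_{I,k}$ and $\RM^{\pm\loc}_{I,k}$. Both special fibers are reduced: the first by the standing hypothesis of the corollary, the second by Theorem \ref{yangthm}(1). A closed immersion of reduced schemes that is a homeomorphism on underlying spaces is an isomorphism, so $\iota_k$ is an isomorphism of schemes.

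Next I would promote the isomorphism on special fibers to an isomorphism of $\iota$ itself using flatness. Let $\mathcal{J}$ denote the ideal sheaf of $\iota$. Since $\RM^{\pm\loc}_I$ is $\CO$-flat, tensoring the short exact sequence
\[
0 \longrightarrow \mathcal{J} \longrightarrow \mathcal{O}_{\RM^\pm_I} \longrightarrow \iota_\ast\mathcal{O}_{\RM^{\pm\loc}_I} \longrightarrow 0
\]
with $k$ over $\CO$ remains short exact (the relevant $\mathrm{Tor}_1^\CO$ vanishes), and combined with $\iota_k$ being an isomorphism, this gives $\mathcal{J}/\pi\mathcal{J} = 0$. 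On the other hand, $\mathcal{J}$ vanishes on the generic fiber by construction of the schematic closure, so its support lies in the special fiber, where Nakayama's lemma applied at each stalk (whose maximal ideal contains $\pi$) forces $\mathcal{J}=0$. Hence $\iota$ is an isomorphism, as desired. I do not anticipate any serious obstacle; the entire argument is a formal consequence of the two halves of Theorem \ref{yangthm} together with a standard flatness-plus-Nakayama bookkeeping step.
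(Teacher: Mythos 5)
Your proof is correct. It is the same overall strategy as the paper's one-line proof: reduce Conjecture~\ref{introconj-pr} to showing the closed immersion $\iota\colon \RM^{\pm\loc}_I \hookrightarrow \RM^\pm_I$ is an isomorphism, using Theorem~\ref{yangthm}(2) for the topological statement and the reducedness hypothesis for the scheme-theoretic upgrade. Where you and the paper's proof differ is in how this upgrade is carried out: you pass to special fibers, establish $\iota_k$ is an isomorphism (only reducedness of the target $\RM^\pm_{I,k}$ is actually needed here), then deduce $\mathcal{J}=0$ via the $\Tor_1$ vanishing from $\CO$-flatness of $\RM^{\pm\loc}_I$ followed by Nakayama at stalks over the special fiber; the paper's terse proof is most naturally read as first observing that $\RM^\pm_I$ itself is reduced (density of the smooth generic fiber forces $\pi$-torsion into the nilradical, and both fibers are reduced, so the nilradical is trapped between $\pi\cdot(-)$ and $\pi^\infty$-torsion and hence vanishes), after which a closed immersion inducing a homeomorphism into a reduced scheme is automatically an isomorphism. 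Both routes are standard in the local-model literature; yours, being the Görtz-style ideal-sheaf computation, is more explicit and needs flatness of $\RM^{\pm\loc}_I$, while the paper's avoids it. No gaps.
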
	
\begin{proof}
	By assumption and Theorem \ref{yangthm} (2), the closed immersion $\RM^{\pm\loc}_I\hookrightarrow \RM^\pm_I$ is an isomorphism. Hence, $\RM^\pm_I$ is flat over $\CO$.
\end{proof}

Recall that over the function field $k((t))$, we \cite{yangtopflat} defined\footnote{In \loccit, we used the notation $G$ and $\CP_I$.} the equal-characteristic analogues $G^\flat$ and $\sG_I^\flat$ corresponding to $G$ and $\sG_I$, respectively.  By \cite[Corollary 3.22]{yangtopflat}, we have a closed embedding \begin{flalign*}
	\RM_{I,k}^\pm\hookrightarrow LG^{\flat\circ}/L^+\sG_I^{\flat\circ},
\end{flalign*}
identifying $\RM^\pm_{I,k}$ with a union of Schubert varieties in the affine flag variety $LG^{\flat\circ}/L^+\sG_I^{\flat\circ}$.
By construction, for each $i\in I$, we have a natural $L^+\sG_I^{\flat\circ}$-equivariant commutative diagram \begin{flalign*}
	 \xymatrix{
	    \RM^\pm_{I,k}\ar@{^{(}->}[r]\ar[d] &LG^{\flat\circ}/L^+\sG_I^{\flat\circ}\ar[d]^{\rho_i}\\ \RM^\pm_{i,k}\ar@{^{(}->}[r] &LG^{\flat\circ}/L^+\sG_i^{\flat\circ},
	 }
\end{flalign*} 
and we have (cf. \cite[Lemma 4.15]{yangtopflat}) \begin{flalign}
	\RM^\pm_{I,k} = \bigcap_{i\in I}\rho_i\inverse(\RM^\pm_{i,k}) \label{MIintersec}
\end{flalign}
as a schematic intersection. Here, $\rho_i\inverse(\RM^\pm_{i,k})$ is the pullback of $\RM^\pm_{i,k}$ along $\rho_i$.

By construction, the naive local model $\RM^\naive_i$ is isomorphic to the projective $\CO$-scheme representing the functor \begin{flalign*}
	\RM^\naive_i\colon \Sch_{/\CO}^\op \ra \Sets
\end{flalign*}
sending an $\CO$-scheme $S$ to the set of pairs $(\CF,\CG)$ of $\CO_S$-modules such that \begin{enumerate}
	\item [LM1.] Zariski locally on $S$, the $\CO_S$-module $\CF_i$ (resp. $\CF_{-i}$) is a direct summand of $\Lambda_{i,S}$ (resp. $\Lambda_{-i,S}$) of rank $n$;
	\item [LM2.] the perfect pairing $$\psi\otimes 1\colon \Lambda_{i,S}\times\Lambda_{-i,S} \ra \CO_S$$ induced by $\psi$ satisfies $(\psi\otimes 1)(\CF_i,\CF_{-i})=0$;
	\item [LM3.] the natural maps \begin{flalign*}
		  \Lambda_{-i,S}\xrightarrow{\iota_1} \Lambda_{i,S}\xrightarrow{\iota_2} \pi_0\inverse\Lambda_{-i,S}
	\end{flalign*}
	induced by the inclusions $\Lambda_{-i}\sset \Lambda_i\sset \pi_0\inverse\Lambda_{-i}$ satisfy \begin{flalign*}
		\text{$\iota_1(\CF_{-i})\sset \CF_i$ and $\iota_2(\CF_i)\sset \pi_0\inverse\CF_{-i}$.}
	\end{flalign*} 
\end{enumerate}
\begin{remark}
	It follows from condition LM2 that a point $(\CF_i,\CF_{-i})$ in $\RM^\naive_i$ is uniquely determined by $\CF_i$ or $\CF_{-i}$.
\end{remark}

\begin{defn}[{cf. \cite[Definition 1.7]{yangtopflat}}]  \label{introdefn-Ml}
    Let $\iota_2\colon \Lambda_i\ra \pi_0\inverse\Lambda_{-i}$ denote the natural inclusion map (and its base change). 
    For an integer $\ell$, denote by $$S_i(\ell)\sset \RM^\naive_{i,k}$$ the closed subscheme such that for any $k$-algebra $R$, $(\CF_i,\CF_{-i})\in \RM^\naive_{i,k}(R)$ lies in $S_i(\ell)(R)$ if and only if $\wedge^{\ell+1}(\iota_2: \CF_i\ra \pi_0\inverse\CF_{-i})=0$. 
    
    Let $\RM^\pm_i(\ell)\sset \RM^\pm_{i,k}$ denote the locus where $\iota_2(\CF_i)$ has rank $\ell$. Then $\RM^\pm_i(\ell)$ is a (reduced) locally closed subscheme of $\RM^\pm_{i,k}$.
\end{defn}

\begin{prop} \label{prop-stratificationMpm}
	Suppose that $I=\cbra{i}\sset [0,n]$. 
	\begin{enumerate}
	    \item The underlying topological space of $\RM^\naive_i$ is the union of those of $\RM^+_i$ and $\RM^-_i$.
		\item If $i=0$ or $n$, then $\RM^\pm_i$ is isomorphic to a connected component of the orthogonal Grassmannian $\OGr(n,2n)$ over $\CO$. In particular, $\RM^\pm_i$ is irreducible and smooth of relative dimension $n(n-1)/2$.
		\item If $i\neq 0,n$, then there exists a stratification of the reduced special fiber \begin{flalign}
        (\RM^\pm_{i,k})_\red=\coprod_{\ell=\max\cbra{0,2i-n}}^i \RM^\pm_i(\ell), \label{stratification}
    \end{flalign}
    where the top stratum $\RM^\pm_i(i)$ decomposes into exactly two Schubert cells (of dimension $n(n-1)/2$), and each lower stratum $\RM^\pm_i(\ell)$ for $\max\cbra{0,2i-n}\leq \ell<i$ is a single Schubert cell. 
    
    Moreover, as topological subspaces of $\RM^\naive_{i,k}$, $S_i(i)$ is the union of $\RM^+_i$ and $\RM^-_i$; and for $\max\cbra{0,2i-n}\leq \ell<i$, $S_i(\ell)_\red$ is a Schubert variety in $\RM^\pm_{i,k}$. 
	\end{enumerate} 
\end{prop}
\begin{proof}
	See \cite[Remark 3.8, Corollary 4.11, Proposition 4.13]{yangtopflat}. 
\end{proof}

\section{Local coordinate rings of spin local models}\label{sec-coordrings}

In this section, we retain the notation of \S \ref{sec-introduction} and \S \ref{sec-topoflat}. We focus on the pseudo-maximal parahoric case $I=\cbra{i}\sset [0,n]$.  We will provide an explicit description of the coordinate ring of an affine neighborhood $\RU^\naive_i$ containing the worst point in $\RM^\naive_i$. We give an explicit matrix-defined ring $\RR^\pm_{i,k}$ such that $\RU^\pm_{i,k}=\RU^\naive_{i,k}\cap \RM^\pm_{i}$ is (isomorphic to) a closed subscheme of a product of $\Spec \RR^\pm_{i,k}$ and an affine space.

\subsection{The worst point of $\RM^\pm_i$}  \label{worstsec}

Fix an ordered $\CO$-basis $(f_j)_{j=1}^{2n}$ of $\Lambda_i$ \begin{flalign}
    \pi_0\inverse e_1,\ldots,\pi_0\inverse e_i,e_{i+1},\ldots,e_{2n},  \label{basis1}
\end{flalign}  
and an ordered $\CO$-basis $(g_j)_{j=1}^{2n}$ of $\Lambda_{-i}$ \begin{flalign}
    e_1,\ldots,e_{n-i},\pi_0 e_{n-i+1},\ldots,\pi_0 e_{2n}.  \label{basis2}
\end{flalign}   
By abuse of notation, we also use $f_j$ (resp. $g_j$) to denote the $R$-basis of the base change $\Lambda_i\otimes R$ (resp. $\Lambda_{-i}\otimes R)$ for an $\CO$-algebra $R$. 

Set \begin{flalign*}
	\CF_{i,0} &\coloneqq k\pair{f_{i+1},\ldots,f_{n+i}}\sset \Lambda_i\otimes k, \\ \CF_{-i,0} &\coloneqq k\pair{g_1,\ldots,g_{n-i},g_{2n-i+1},\ldots,g_{2n}}\sset\Lambda_{-i}\otimes k.
\end{flalign*}
By Definition \ref{introdefn-Ml} and Proposition \ref{prop-stratificationMpm},  the pair \begin{flalign}
	P_0\coloneqq (\CF_{i,0},\CF_{-i,0}) \label{P0}
\end{flalign}  defines a point in the lowest stratum $\RM^\pm_i(\ell_0)$ for $\ell_0\coloneqq \max\cbra{0,2i-n}$.
Since each stratum in the decomposition \eqref{stratification} is stable under the action of the parahoric group scheme $\sG^\circ_i$, $\RM^\pm(\ell_0)$ is the unique closed $\sG^\circ_i$-orbit. Then we obtain the following proposition.
\begin{prop}\label{lem-coverworst}
    The $\sG^\circ_i$-translates of any non-empty open affine neighborhood of $P_0$ in $\RM^\pm_i$ cover the whole scheme $\RM^\pm_i$. 
\end{prop} 
We will call $P_0$ the \dfn{worst point} of $\RM^\pm_i$. 

By construction, we can view $\RM^\naive_i$ as a closed subscheme of $\Gr(n,\Lambda_i)\times \Gr(n,\Lambda_{-i})$.
The standard open affine neighborhood $\RU^{\mathrm{Gr}}_i$ in $\Gr(n,\Lambda_i)\times\Gr(n,\Lambda_{-i})$ containing $P_0$ is the $\CO$-scheme of two $2n\times n$ matrices \[\begin{pmatrix}
    M\\ I_n\\ M'
\end{pmatrix} \text{\ and\ } \begin{pmatrix}
    0 & &I_{n-i}\\ &N  \\ I_i & &0
\end{pmatrix},  \]
where $M$ is of size $i\times n$, $M'$ is of size $(n-i)\times n$, and $N$ is of size $n\times n$. The point $P_0$ corresponds to the closed point defined by $M=M'=N=\pi_0=0$. Define \[\RU_i^\pm\text{\ and\ } \RU_i^\naive \] by intersecting $\RU_i^\mathrm{Gr}$ with $\RM^\pm_i$ and $\RM^\naive_i$, respectively. 

\begin{corollary}\label{coro-Ureduced}
	If $\RU^\pm_{i,k}$ is reduced, then $\RM^\pm_{i,k}$ is reduced.
\end{corollary}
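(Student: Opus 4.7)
The plan is to deduce the reducedness of $\RM^\pm_{i,k}$ from that of $\RU^\pm_{i,k}$ by exploiting the action of the smooth parahoric group scheme $\sG^\circ_i$ on $\RM^\pm_i$ together with the covering property of Proposition \ref{lem-coverworst}.

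First, I would consider the restriction of the action morphism to the special fiber,
\[
a_k\colon \sG^\circ_{i,k} \times_k \RU^\pm_{i,k} \longrightarrow \RM^\pm_{i,k}.
\]
The standard shear trick, rewriting $(g,x)\mapsto g\cdot x$ as the composition of the automorphism $(g,x)\mapsto (g,g\cdot x)$ of $\sG^\circ_{i,k}\times_k \RM^\pm_{i,k}$ with the second projection, shows that the action of the smooth $k$-group $\sG^\circ_{i,k}$ on $\RM^\pm_{i,k}$ is a smooth morphism. Restricting to the open subscheme $\sG^\circ_{i,k}\times_k \RU^\pm_{i,k}$ preserves smoothness, and Proposition \ref{lem-coverworst} (applied after base change to $k$) implies that $a_k$ is surjective.

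Next, I would verify that the source of $a_k$ is reduced. Since $\sG^\circ_i$ is smooth over $\CO$ (cf.\ \eqref{Gpara}), its special fiber $\sG^\circ_{i,k}$ is smooth, hence geometrically reduced, over $k$. The fibre product over $k$ of a geometrically reduced scheme with a reduced scheme is again reduced (cf.\ \cite{stacks-project}), so $\sG^\circ_{i,k}\times_k \RU^\pm_{i,k}$ is reduced by the hypothesis on $\RU^\pm_{i,k}$.

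Finally, a smooth surjective morphism is faithfully flat, and reducedness descends along faithfully flat morphisms: if $f\colon X\to Y$ is faithfully flat with $X$ reduced, then for each $y\in Y$ there exists $x\in X$ with $f(x)=y$, and the resulting map $\CO_{Y,y}\hookrightarrow \CO_{X,x}$ is injective, forcing every nilpotent of $\CO_{Y,y}$ to vanish. Applying this to $a_k$ yields the desired reducedness of $\RM^\pm_{i,k}$. The argument is essentially formal once Proposition \ref{lem-coverworst} is in hand; the only point requiring care is the smoothness of the action morphism, which is standard.
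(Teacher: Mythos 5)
Your argument is correct and is the standard way of making precise the paper's one-line proof, which simply cites Proposition \ref{lem-coverworst}. Using the smooth, surjective action map $\sG^\circ_{i,k}\times_k\RU^\pm_{i,k}\to\RM^\pm_{i,k}$ together with faithfully flat descent of reducedness is exactly what ``follows easily'' means here; in particular, routing the argument through the smooth action map neatly avoids any need to verify that $\RU^\pm_{i,k}$ is \emph{geometrically} reduced, which a naive ``translate-and-cover over $\ol{k}$'' argument would otherwise require.
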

\begin{proof}
	This follows easily from Proposition \ref{lem-coverworst}.
\end{proof}

\subsection{The coordinate rings of $\RU_i^\pm$ and $\RU_i^\naive$}
We will analyze the above affine open neighborhoods around $P_0$ in detail. By Proposition \ref{prop-stratificationMpm} (4), we may assume that $2i\leq n$. We divide the matrices $M$, $M'$, and $N$ into blocks:
\begin{flalign*}
    &\begin{array}{c@{}c@{}c@{}c}
         M'= &\left(\begin{array}{ccc}
              M_9 &M_6 &M_3  \\
              M_8 &M_5 &M_2 
         \end{array}\right) &\begin{array}{c}
              \raisebox{-0.5ex}{\mbox{\tiny $n-2i$}}\vspace{5pt}  \\
              \raisebox{0.5ex}{\mbox{\tiny $i$}}
         \end{array}  \\
         &\begin{array}{ccc}
            \raisebox{0.5ex}{\mbox{\tiny $n-2i$}} &\raisebox{0.5ex}{\mbox{\ \tiny $i$}} &\raisebox{0.5ex}{\mbox{\quad \tiny $i$}}  
         \end{array} 
    \end{array}, \qquad  
    \begin{array}{c@{}c@{}c@{}c}
         M= &\left(\begin{array}{ccc}
              M_7 &M_4 &M_1  
         \end{array}\right) &\begin{array}{c}
              \raisebox{0ex}{\mbox{\tiny $n-i$}}
         \end{array}  \\
         &\begin{array}{ccc}
             \raisebox{0ex}{\mbox{\tiny $n-2i$}} &\raisebox{0ex}{\mbox{\ \tiny $i$}} &\raisebox{0ex}{\mbox{\quad \tiny $i$}}  
         \end{array} 
    \end{array}, \\
    &\hspace{3cm}
    \begin{array}{c@{}c@{}c@{}c}
          N= &\left(\begin{array}{ccc}
               N_1 &N_2 &N_3  \\
               N_4 &N_5 &N_6 \\
               N_7 &N_8 &N_9
          \end{array}\right) &\begin{array}{c}
               \raisebox{-0.5ex}{\mbox{\tiny $i$}}\vspace{5pt}  \\
               \raisebox{0.5ex}{\mbox{\tiny $i$}} \\
               \raisebox{0.5ex}{\mbox{\tiny $n-2i$}}
          \end{array}  \\
          &\begin{array}{ccc}
              \raisebox{0.5ex}{\mbox{\ \tiny $i$}} &\raisebox{0.5ex}{\mbox{\quad \tiny $i$}} &\raisebox{0.5ex}{\ \mbox{\tiny $n-2i$}}  
          \end{array} 
     \end{array}.
\end{flalign*}

\subsubsection{Condition LM2} \label{subsubsec-lm2}
With respect to the bases \eqref{basis1} and \eqref{basis2}, the matrix corresponding to the symmetric pairing $\psi$ is $H_{2n}$, the anti-diagonal unit matrix of size $2n\times 2n$. Condition LM2 translates to 
\begin{flalign*}
    \begin{pmatrix}
    M\\ I_n\\ M'
\end{pmatrix}^t H_{2n}  \begin{pmatrix}
    0 & &I_{n-i}\\ &N  \\ I_i & &0
\end{pmatrix} = 0.
\end{flalign*}
This is equivalent to 
\begin{flalign*}
    \begin{pmatrix}
        M_7^tH_{i}+H_{n-2i}N_7 &M_8^tH_{i}+H_{n-2i}N_8 &H_{n-2i}N_9+M_9^tH_{n-2i}\\ M_4^tH_{i}+H_{i}N_4 &M_5^tH_{i}+H_{i}N_5 &H_{i}N_6+M_6^tH_{n-2i}\\ 
        M_1^tH_{i}+H_{i}N_1 &M_2^tH_{i}+H_{i}N_2 &H_{i}N_3+M_3^tH_{n-2i}
    \end{pmatrix}=0.
\end{flalign*}
In particular, the block matrices from $M_1$ to $M_9$ are determined by block matrices from $N_1$ to $N_9$.

\subsubsection{Condition LM3} \label{subsubsec-LM3}
With respect to the bases \eqref{basis1} and \eqref{basis2}, the inclusion map $\Lambda_{-i}\hookrightarrow \Lambda_i$ and $\Lambda_i\hookrightarrow \pi_0\inverse\Lambda_{-i}$ are given by 
\begin{flalign*}
    \iota_1\coloneqq \begin{pmatrix}
        \pi_0 I_i &0 &0\\ 
        0 &I_{2n-2i} & 0\\ 
        0 &0 &\pi_0 I_{i}
    \end{pmatrix}
    \text{\ and\ }
    \iota_2 \coloneqq \begin{pmatrix}
        I_i &0 &0\\ 
        0 &\pi_0 I_{2n-2i} &0\\
        0 &0 &I_i
    \end{pmatrix}
\end{flalign*}
respectively. The condition that the natural map $\Lambda_{i}\otimes_\CO R\ra \pi_0\inverse\Lambda_{-i}\otimes_\CO R$ sends $\CF_{{i}}$ to $\pi_0\inverse\CF_{-i}$ is equivalent to that there exists an $n\times n$ matrix $A_2$ such that \begin{flalign*}
	\iota_2\begin{pmatrix}
    M\\ I_n\\ M'
\end{pmatrix} = \begin{pmatrix}
    0 & &I_{n-i}\\ &N  \\ I_i & &0
\end{pmatrix}A_2.
\end{flalign*}
This amounts to \begin{flalign*}
	&A_2 = \begin{pmatrix}
		M_8 &M_5 &M_2\\ M_7 &M_4 &M_1\\ \pi_0 I_{n-2i} &0 &0
	\end{pmatrix}, \\ 
	&N_1M_8+N_2M_7+\pi_0 N_3=0,\\
	&N_1M_5+N_2M_4-\pi_0 I_i=0,\\
	&N_1M_2+N_2M_1=0, \\
	&N_4M_8+N_5M_7+\pi_0 N_6=0,\\
	&N_4M_2+N_5M_1-\pi_0 I_i=0,\\
	&N_7M_8+N_8M_7+\pi_0 N_9-\pi_0 M_9=0,\\
	&N_7M_5+N_8M_4-\pi_0 M_6=0,\\
	&N_7M_2+N_8M_1-\pi_0 M_3=0.
\end{flalign*}

Similarly, the condition that the natural map $\Lambda_{-i}\otimes_\CO R\ra \Lambda_{i}\otimes_\CO R$ sends $\CF_{-i}$ to $\CF_{{i}}$ is equivalent to that there exists an $n\times n$ matrix $A_1$ such that \begin{flalign*}
	\iota_1\begin{pmatrix}
    0 & &I_{n-i}\\ &N  \\ I_i & &0
\end{pmatrix}= \begin{pmatrix}
    M\\ I_n\\ M'
\end{pmatrix} A_1.
\end{flalign*}
This amounts to \begin{flalign*}
	&A_1=\begin{pmatrix}
		0 &0 &I_{n-2i}\\ N_1 &N_2 &N_3\\ N_4 &N_5 &N_6
	\end{pmatrix},\\ 
	&M_4N_1+M_1N_4=0,\\
	&M_4N_2+M_1N_5-\pi_0 I_i=0,\\
	&M_7+M_4N_3+M_1N_6=0,\\
	&M_6N_1+M_3N_4-N_7=0,\\
	&M_9+M_6N_3+M_3N_6-N_9=0,\\
	&M_5N_1+M_2N_4-\pi_0 I_i=0,\\
	&M_5N_2+M_2N_5=0,\\
	&M_8+M_5N_3+M_2N_6=0.
\end{flalign*}
    
We obtain that, under Condition LM2, equations for Condition LM3 are the following. 
\begin{flalign*}
	&N_1H_iN_5^t+N_2H_iN_4^t+\pi_0 H_i=0,\\
	&N_1H_iN_2^t+N_2H_iN_1^t=0,\\ 
	&N_4H_iN_2^t +N_5H_iN_1^t+\pi_0 H_i=0,\\
	&N_4^tH_iN_1+N_1^tH_iN_4=0,\\
	&N_4^tH_iN_2+N_1^tH_iN_5+\pi_0 H_i=0,\\
	&N_5^tH_iN_2+N_2^tH_iN_5=0,\\
	&N_6^tH_iN_1+N_3^tH_iN_4+H_{n-2i}N_7=0,\\
	&N_3^tH_iN_5+N_6^tH_iN_2+H_{n-2i}N_8=0, \\
	&N_9^tH_{n-2i}+H_{n-2i}N_9+N_6^tH_iN_3+N_3^tH_iN_6=0.
\end{flalign*}
Set \begin{flalign*}
	X=\begin{pmatrix}
		N_1 &N_2\\ N_4 &N_5
	\end{pmatrix}.
\end{flalign*}
Then the first six equations are equivalent to \begin{flalign*}
	XH_{2i}X^t+\pi_0 H_{2i}=X^tH_{2i}X+\pi_0 H_{2i}=0.
\end{flalign*}
We also obtain that the block matrices $N_7$ and $N_8$ are determined by $X$ and the block matrices $N_3,N_6$.

 \setcounter{equation}{2}
 \renewcommand{\theequation}{\thesection.\arabic{equation}}
\subsubsection{Condition LM4} \label{subsubsecLM4}
We will use the following notations:  
\begin{itemize}
        \item Set $\CB\coloneqq \cbra{S\sset [1,2n]\ |\ \# S=n }$.
	\item For an integer $i$, we write $i^*\coloneqq 2n+1-i$. For $S\in\CB$, we write $$S^*\coloneqq \cbra{i^*\ |\ i\in S},\quad S^\perp\coloneqq [1,2n]\backslash S^*.$$
	\item  Let $\sigma_S$ be the permutation on $[1,2n]$ sending $[1,n]$ to $S$ in increasing order and sending $[n+1,2n]$ to $[1,2n]\backslash S$ in increasing order. 
    
    Denote by $\sgn(\sigma_S)\in\cbra{\pm 1}$ the sign of $\sigma_S$. By \cite[Lemma 2.8]{smithling2015moduli}, we have \begin{flalign}
	        \sgn(\sigma_S)=(-1)^{\Sigma S+\lceil n/2\rceil},  \label{eq-SigmaS}
	    \end{flalign}
        see the notation in \S \ref{subsec-notation-intro}.
	\item Set $W\coloneqq \wedge_F^{n}V$. For $S=\tcbra{i_1<\cdots<i_n} \sset\cbra{1,\ldots,2n}$ of cardinality $n$, we write $$e_S\coloneqq e_{i_1}\wedge\cdots\wedge e_{i_n}\in W.$$
	    Note that $(e_S)_{S\in\CB}$ is an $F$-basis of $W$. 
        In the decomposition $W=W_+\oplus W_-$, by \cite[Lemma 7.2]{pappas2009local}, we have
	$$W_{\pm 1}= \Span_F\cbra{e_S\pm\sgn(\sigma_S)e_{S^\perp}\ |\ S\in \CB}.$$ 
         \item For any $\CO$-lattice $\Lambda$ in $V$, set $$W(\Lambda)\coloneqq \wedge^n_\CO\rbra{\Lambda},\ W(\Lambda)_{\pm 1}\coloneqq W_{\pm 1}\cap W(\Lambda).$$ Then $W(\Lambda)$ (resp. $W(\Lambda)_{\pm 1}$) is an $\CO$-lattice in $W$ (resp. $W_{\pm 1}$).	
      \item Denote by $g_1,\ldots,g_{2n}$ the ordered $\CO$-basis $$e_1,\ldots, e_i,\pi_0 e_{i+1},\cdots,\pi_0 e_{2n}$$ of $\Lambda_{-i}$. Then $(g_S)_{S\in\CB}$ forms an $\CO$-basis of $W(\Lambda_{-i})$.
\end{itemize} 

Set $$f_S\coloneqq e_S\pm \sgn(\sigma_S)e_{S^\perp}. $$ Then 
\begin{flalign}
    f_{S^\perp}=\pm\sgn(\sigma_S)f_S.  \label{gSSperp}
\end{flalign}  
We have $f_S=\pi_0^{-d_S}g_S\pm \sgn(\sigma_S)\pi_0^{-d_{S^\perp}}g_{S^\perp}$. Here, for a subset $S$ of $\sbra{1,2n}$, we define $$d_S\coloneqq \# \rbra{S\cap \sbra{2n-i+1,2n}}.$$ Depending on whether $d_S$ or $d_{S^\perp}$ is larger, a suitable multiple of $f_S$ forms an $\CO$-basis of $W(\Lambda_{-i})_\pm$. The basis is given as follows: set
\[h_S\coloneqq \begin{cases}
    \pi_0^{d_S}f_S=g_S\pm \sgn(\sigma_S)\pi_0^{d_S-d_{S^\perp}}g_{S^\perp} \quad  &\text{if $d_S\geq d_{S^\perp}$},\\ \pi_0^{d_{S^\perp}}f_S=\pi_0^{d_{S^\perp}-d_S}g_S\pm \sgn(\sigma_S)g_{S^\perp} &\text{otherwise}.
\end{cases} \] 
We write $S\preccurlyeq S^\perp$ if 
\begin{itemize}
    \item $S\neq S^\perp$, and $S$ is less than $S^\perp$ in lexicographic order\footnote{By convention, elements in $S$ or $S^\perp$ are listed in increasing order.}, or
    \item $S={S^\perp}$, and $\sgn(\sigma_S)=1$ (for the ``$+$" condition) with $\sgn(\sigma_S)=-1$ (for the ``$-$" condition).
\end{itemize}

\begin{lem}
    Set $\CB_0\coloneqq \cbra{S\in\CB\ |\ S\preccurlyeq S^\perp}$. Then \[\cbra{h_S\ |\ S\in\CB_0} \]
    forms an $\CO$-basis of $W(\Lambda_{-i})_\pm$.
\end{lem}
\begin{proof}
    Note that the role of $\CB_0$ is to pick exactly one element from the pair $\cbra{S,S^\perp}$, subject to the condition that $f_S\neq 0$. Hence, the set $$\cbra{f_S\ |\ S\in\CB_0}$$ is an $F$-basis of $W_\pm$.  Observe that an element \[\sum_{S\in\CB_0}c_Sf_S\in W_\pm, \text{\ where $c_S\in F$,} \]  lies in $W(\Lambda_{-i})$ if and only if $c_Sf_S\in W(\Lambda_{-i})$ for all $S\in\CB_0$. The condition that $c_Sf_S\in W(\Lambda_{-i})$ is equivalent to the existence of some $c_S'\in\CO$ such that $c_Sf_S=c_S'h_S$. Hence, the set $\cbra{h_S\ |\ S\in\CB_0}$ forms an $\CO$-basis of $W(\Lambda_{-i})_\pm=W(\Lambda_{-i})\cap W_\pm$. 
\end{proof}

Note that the line $\wedge^n_\CO\CF_{\Lambda_{-i}}$ is generated by \[\sum_{S\in\CB} a_Sg_S,\] where $a_S$ denotes the determinant of the $n\times n$ submatrix formed by selecting the rows indexed by $S$ (in increasing order) from the matrix \[\begin{pmatrix}
    0 & &I_{n-i}\\ &N  \\ I_i & &0
\end{pmatrix}.\]   Then condition LM4 translates to 
\begin{flalign}
    \sum_{S\in\CB}a_Sg_S= \sum_{S\in\CB_0}c_Sh_S  \label{eq38}
\end{flalign}
for some $c_S\in \CO$. By comparing the coefficients, we obtain that \eqref{eq38} is equivalent to 
\begin{flalign*}
    \text{for all $S\in\CB$ with $d_S\leq d_{S^\perp}$, $a_S=\pm\sgn(\sigma_S)\pi_0^{d_{S^\perp}-d_S}a_{S^\perp}$ }.
\end{flalign*}
Over the special fiber $\RM^\naive_{i,k}$, the condition simplifies to 
\begin{flalign*}
    &\text{for $S\in\CB$ with $d_S=d_{S^\perp}$, $a_S=\pm\sgn(\sigma_S)a_{S^\perp}$, and }\\ &\text{for $S\in\CB$ with $d_S<d_{S^\perp}$, $a_S=0$}.
\end{flalign*}
Note that we have \begin{equation}
    \begin{split}
        d_S\leq d_{S^\perp} &\Longleftrightarrow d_S+d_{S^*}\leq i\\ &\Longleftrightarrow \#\rbra{S\cap\sbra{2n-i+1,2n} } + \#\rbra{S\cap \sbra{1,i } }\leq i\\ &\Longleftrightarrow \#\rbra{S\cap \sbra{i+1, 2n-i} }\geq n-i.
    \end{split}  \label{36eq}
\end{equation}
Recall that  \begin{flalign*}
    X\coloneqq \begin{pmatrix}
        N_1 &N_2\\ N_4 &N_5
    \end{pmatrix},
\end{flalign*}
which is a $2i\times 2i$ matrix.

\begin{prop}\label{lem-wedgeN-1}
    The equality $a_S=0$ over $\RU^\pm_{i,k}$ for $S\in\CB$ with $d_S<d_{S^\perp}$ implies that 
    \begin{flalign}
    	\wedge^{i+1}X=0.  \label{eq-wed}
    \end{flalign}
\end{prop}
\begin{proof}
    Since $a_S$ is a minor of the matrix \begin{flalign*}
    	\begin{pmatrix}
    		0 &I_i &0\\ 0 &0 &I_{n-2i}\\ N_1 &N_2 &N_3\\ N_4 &N_5 &N_6\\ N_7 &N_8 &N_9\\ I_{i} &0 &0
    	\end{pmatrix},
    \end{flalign*}
    any minor of $X$ is equal to $a_S$ or $-a_S$ for some $S\in\CB$. Let $S_1$ be the set of row indices of a $(i+1)\times(i+1)$ minor $D$ of $X$. Then we can pick an $S\in \CB$ such that \begin{flalign}
    	S\cap [i+1,2n-i]=S_1\cup [i+1,n-i]  \label{Sint}
    \end{flalign} and $D=\pm a_S$. By \eqref{Sint}, we have \begin{flalign*}
    	\# (S\cap [i+1,2n-i])\geq n-i+1.
    \end{flalign*}
    Hence, by \eqref{36eq}, we have $d_S< d_{S^\perp}$, and the minor $D$ equals zero over $\RU^\pm_{i,k}$. 
\end{proof}

Note that for $[1,2i]$, we have similar definitions as in the beginning of \S \ref{subsubsecLM4}. For example, for a subset $U\sset [1,2i]$ of cardinality $i$, we denote by $\sigma_U$ the permutation of $[1,2i]$ sending $[1,i]$ to $U$ in increasing order and sending $[i+1,2i]$ to the complement $U^c$ in increasing order.

\begin{prop}\label{lem-wedgeN}
    The equality $a_S=\pm\sgn(\sigma_S)a_{S^\perp}$ for $S\in\CB$ with $d_S=d_{S^\perp}$ implies that
    \begin{flalign}
        [U:U'](X)=\pm \sgn(\sigma_U)\sgn(\sigma_{U'})[U^\perp:U^{'\perp}](X)  \label{eq-wedsgn}
    \end{flalign}
    for any subsets $U$ and $U'$ in $[1,2i]$ of cardinality $i$, where $[U:U'](X)$ denotes the minor of $X$ whose rows (resp. columns) are given by $U$ (resp. $U'$) in increasing order. 
\end{prop}
\begin{proof}
    For $S\in \CB$, set \[S_1\coloneqq S\cap [1,i], S_2\coloneqq S\cap [n-i+1,n+i], S_3\coloneqq S\cap \sbra{i^*,2n}. \]
    Denote $r_i\coloneqq \# S_i$ for $i=1,2,3$. Suppose that $S$ contains $\sbra{i+1,n-i}$ and $r_2=i$. Then the condition $d_S=d_{S^\perp}$ implies that  \begin{flalign*}
    	S=S_1\sqcup S_2\sqcup S_3\sqcup\sbra{i+1,n-i} \text{\ and\ } r_1+r_3=i.
    \end{flalign*} 
    Define \[U\coloneqq S_2-(n-i)=\cbra{x-(n-i)\ |\ x\in S_2}\sset [1,2i] .\]
    Define subsets $T_1\sset \sbra{1,i}$ and $T_3\sset \sbra{i+1,2i}$ via \[\sbra{1,i}\backslash T_1=S_3-(2n-i) \text{\ and\ } \sbra{i+1,2i}\backslash T_3=S_1+i. \]
    Here, elements in $T_1$ and $T_3$ are listed in increasing order. We have $\# T_1=r_1$ and $\# T_3=r_3$. Set \[U'\coloneqq T_1\cup T_3\sset \sbra{1,2i}.\] Then $U'$ is a subset of cardinality $i$.
    Using Laplace expansion, we obtain that \[a_S=(-1)^{(n-i)r_1+\frac{r_1(r_1+1)}{2}+\frac{r_3(r_3+1)}{2}+\Sigma S_1+\Sigma S_3}[U:U'](X). \]
    Similarly, we have \[a_{S^\perp}=(-1)^{(n-i)r_1+\frac{r_1(r_1+1)}{2}+\frac{r_3(r_3+1)}{2}+\Sigma S_1^\perp+\Sigma S_3^\perp}[U^\perp: U^{'\perp}](X), \]
    where $S_1^\perp\coloneqq S^\perp\cap\sbra{1,i}$ and $S_3^\perp\coloneqq S^\perp\cap\sbra{i^*,2n}$.
    Note that $\Sigma S_1+\Sigma S_3=\Sigma S_1^\perp +\Sigma S_3^\perp$, and that $\sgn(\sigma_S)=(-1)^{\Sigma S+\lceil n/2\rceil}$ by \eqref{eq-SigmaS}.
    Hence, the equality $a_S=\pm \sgn(\sigma_S)a_{S^\perp}$ is equivalent to \begin{flalign}
        [U:U'](X)=\pm (-1)^{\lceil n/2\rceil +\Sigma S} [U^\perp:U^{'\perp}](X). \label{2.7}
    \end{flalign}
    By construction, we have \begin{flalign*}
        \Sigma S &=\Sigma S_1+\Sigma S_2+\Sigma S_3 +\sum_{j=i+1}^{n-i}j =\frac{(n+1)n}{2}+2n(i-r_1)+\Sigma U-\Sigma U'.
    \end{flalign*}
    Since $\lceil n/2\rceil\equiv n(n+1)/2 (\mod 2)$,
    it follows that \eqref{2.7} is equivalent to \begin{flalign}
        [U:U'](X)=\pm (-1)^{\Sigma U+\Sigma U'}[U^\perp:U^{'\perp}](X).  \label{312}
    \end{flalign}
    As in \eqref{eq-SigmaS} (see \cite[Lemma 2.8]{smithling2015moduli}), we have \[\sgn(\sigma_U)=(-1)^{\Sigma U+\lceil i/2\rceil }. \]
    Thus, the equality \eqref{312} is also equivalent to \begin{flalign*}
        [U:U'](X)=\pm\sgn(\sigma_U)\sgn(\sigma_{U'})[U^\perp:U^{'\perp}](X). 
    \end{flalign*}
    This proves the proposition.
\end{proof}

\begin{remark}\label{rmk-closed}
   \begin{enumerate}
   	\item By Proposition \ref{lem-wedgeN-1} and \ref{lem-wedgeN}, the special fiber $\RU^\pm_{i,k}$ is a closed subscheme\footnote{After proving in \S \ref{spinflat} that $\RR^\pm_{i,k}$ is reduced, we obtain a posteriori that this closed immersion is in fact an isomorphism, see Proposition \ref{lem-Rikreduced}. } of the scheme defined by the equations \eqref{eq-wed} and \eqref{eq-wedsgn}.
   	\item In \S \ref{subsubsecLM4}, we considered the spin condition (LM4$\pm$) only for $\CF_{-i}\sset \Lambda_{-i}\otimes R$. In fact, one can check that if $\CF_{-i}$ satisfies (LM4$\pm$), then so does $\CF_i$; cf. \cite[Proposition 2.4.3]{luo}. 
   \end{enumerate}
\end{remark}

\subsubsection{Defining equations} \label{subsubsec-equations}
Recall that \[X= \begin{pmatrix}
    N_1 &N_2\\ N_4 &N_5
 \end{pmatrix}, \] which is a matrix of size $(2i)\times (2i)$.
By results from \S \ref{subsubsec-lm2} and \S \ref{subsubsec-LM3}, we obtain that the coordinate ring of $\RU_i^\naive$ is isomorphic to
\begin{flalign*}
     \rR^\naive_i &\coloneqq \frac{\CO[N_3,N_6,N_9,X]}{\left(\Centerstack[l]{$(H_{n-2i}N_9+N_9^tH_{n-2i}+N_6^tH_iN_3+N_3^tH_iN_6$,\\ $XH_{2i}X^t+\pi_0 H_{2i}, X^tH_{2i}X+\pi_0 H_{2i}$  } \right) }.
 \end{flalign*}
 The equation $H_{n-2i}N_9+N_9^tH_{n-2i}+N_6^tH_iN_3+N_3^tH_iN_6=0$ implies that $N_9$ is determined by $N_3$, $N_6$, and the entries lying (strictly) above the secondary diagonal of $N_9$.
 We obtain that $\RU^\naive_i$ is isomorphic to 
 \begin{flalign*}
      \Spec \frac{\CO[X]}{(XH_{2i}X^t+\pi_0 H_{2i}, X^tH_{2i}X+\pi_0 H_{2i})} \times_\CO  \BA_\CO^{(n-2i)(n+2i-1)/2},
 \end{flalign*}
 where $\times_\CO$ denotes the fiber product over $\Spec\CO$.
 
 By results in \S \ref{subsubsecLM4},  the special fiber $\RU^\pm_{i,k}$ admits a closed immersion (see Remark \ref{rmk-closed}) into
 \begin{flalign}
     \Spec \RR^\pm_{i,k}\times_k\BA^{(n-2i)(n+2i-1)/2}_k, \text{\ for\ }   
      \rR^\pm_{i,k} \coloneqq \frac{k[X]}{(XH_{2i}X^t, X^tH_{2i}X, \wedge^{i+1}X,\RI^\mp)}.   \label{Rikequation}
 \end{flalign}
 Here, $\RI^\mp$ denotes the ideal generated by $$[S:T](X)\mp \sgn(\sigma_S)\sgn(\sigma_T)[S^\perp:T^\perp](X)$$ 
for all subsets $S,T\sset [1,2i]$ (in increasing order) of cardinality $i$. 

\begin{prop}\label{prop-schulocal}
	For $0\leq \ell\leq i$, there exists an isomorphism \begin{flalign*}
		\RU^\naive_{i,k}\cap S_i(\ell)\simeq \Spec \frac{k[X]}{(XH_{2i}X^t,X^tH_{2i}X,\wedge^{\ell+1}X)}\times_k\BA^{(n-2i)(n+2i-1)/2}_k,
	\end{flalign*}
	where $S_i(\ell)$ is defined in Definition \ref{introdefn-Ml}.
\end{prop}
\begin{proof}
	It remains to show that the condition $\wedge^{\ell+1}(\iota_2:\CF_i\ra \pi_0\inverse\CF_{-i})=0$ in Definition \ref{introdefn-Ml} amounts to $\wedge^{\ell+1}X=0$. By results in \S \ref{subsubsec-LM3}, the matrix corresponding to $\iota_2$ is given by (over $k$) 
    \begin{flalign*}
    	A_2 = \begin{pmatrix}
		M_8 &M_5 &M_2\\ M_7 &M_4 &M_1\\ 0 &0 &0
	\end{pmatrix}.
    \end{flalign*}
    By equations in \S \ref{subsubsec-LM3}, we have \begin{flalign*}
    	\begin{pmatrix}
    		M_8\\ M_7
    	\end{pmatrix} = \begin{pmatrix}
    		M_5 &M_2\\ M_4 &M_1
    	\end{pmatrix}\begin{pmatrix}
    		N_3\\ N_6
    	\end{pmatrix}.
    \end{flalign*}
    Hence, the rank of $A_2$ is equal to that of $\begin{psmallmatrix}
    	M_5 &M_2\\ M_4 &M_1
    \end{psmallmatrix}$, which also equals  $\rk X=\rk \begin{psmallmatrix}
    	N_1 &N_2\\ N_4 &N_5
    \end{psmallmatrix}$ by \S \ref{subsubsec-lm2}.  
\end{proof}

\begin{corollary}\label{coro-redCM}
	Denote $$R(\ell)\coloneqq \frac{k[X]}{(XH_{2i}X^t,X^tH_{2i}X,\wedge^{\ell+1}X)}.$$
	For $0\leq \ell<i$, the ring $R(\ell)_\red$ is a normal domain and Cohen--Macaulay. For $\ell=i$, $R(i)$ has exactly four minimal primes $\fq_d$ ($d=1,2,3,4$), for which the quotient $R(i)/\fq_d$ is a normal domain and Cohen--Macaulay.
\end{corollary}
\begin{proof}
	By Proposition \ref{prop-stratificationMpm} (3), $S_i(i)$ has exactly four irreducible components,  and 
	For $0\leq\ell<i$, $S_i(\ell)_\red$ is a Schubert variety by Proposition \ref{prop-stratificationMpm} (3). Hence, $S_i(\ell)_\red$ (resp. each irreducible component of $S_i(i)$) is irreducible, normal and Cohen--Macaulay; cf. \cite[Theorem 0.3]{PR08}. Then the corollary follows from Proposition \ref{prop-schulocal}. 
\end{proof}

\section{Reducedness of a class of matrix-defined rings} \label{sec-flatness}
In this section, we prove that a class of matrix-defined rings is reduced, completing the proof of Theorem \ref{intro-2}. Although the statement is purely commutative algebra, our argument relies on the representation theory of complex orthogonal groups. This section is self-contained and of independent interest. 

\subsection{} \label{subsec-Rnotation}
Let $N\geq 1$ be an integer\footnote{For applications, we take $N=2i$ for some integer $0\leq i\leq n$. We also include the case of odd $N$ for future use.}. Set $m=\lfloor N/2\rfloor$. Denote \begin{flalign}
	J=J_N\coloneqq \begin{pmatrix}
0 & 1 &        &        &        &        \\
1 & 0 &        &        &        &        \\
  &   & 0 & 1  &        &        \\
  &   & 1 & 0  &        &       \\
  &   &   &    & \ddots &       \\
  &   &   &    &  &0 & 1   \\
  &   &   &    &  &1 & 0   
\end{pmatrix} \text{\ resp.\ }\begin{pmatrix}
0 & 1 &        &        &        &        \\
1 & 0 &        &        &        &        \\
  &   & 0 & 1  &        &        \\
  &   & 1 & 0  &        &       \\
  &   &   &    & \ddots &       \\
  &   &   &    &  &0 & 1   \\
  &   &   &    &  &1 & 0\\ & & & & & & &1   
\end{pmatrix}    \label{Jmatrix}
\end{flalign} 
if $N=2m$ is even, resp. $N=2m+1$ is odd.
Define the ordered set \begin{flalign}
    \sI\coloneqq \begin{cases}
    	\cbra{\ov{1} <1<\ov{2}<2<\cdots<\ov{m}<m} &\text{if $N=2m$;} \\ \cbra{\ov{1} <1<\ov{2}<2<\cdots<\ov{m}<m<0} &\text{if $N=2m+1$.}    
    \end{cases}    \label{eq-sI}
\end{flalign} 
Set $\bar 0\coloneqq 0$ and $\bar\barj\coloneqq j$ for $j\in\sI$. Then the bar operation $\bar{\ }$ is an involution on $\sI$. 
\begin{defn}
	Suppose that $N=2m$. Let $\sB$ denote the set of subsets of $\sI$ of cardinality $m$.  For a subset $U=\cbra{j_1<j_2<\cdots<j_m}\sset \sI$ in $\sB$, we write\footnote{Via a chosen bijection between $\sI$ and $[1,2m]$ (see \S \ref{subsec-Rnotation}), the operation $\perp$ used in \S \ref{subsubsecLM4} agrees with the one used here; we thus keep the same symbol by abuse of notation.} $$U^\perp\coloneqq (\bar U)^c=\cbra{k_1<\cdots<k_m}$$ for the complement of $\bar U=\cbra{\barj_1,\ldots,\barj_m}$. Denote by $\tau_U$ the permutation of $\sI$ which rearranges the sequence $$j_1,j_2,\ldots,j_m,\bar k_1,\bar k_2\ldots,\bar k_m$$ in increasing order from left to right.
\end{defn}

\begin{defn} \label{defn-RNJ}
	Let $R$ be a $\BZ[1/2]$-algebra. Let $X$ be a generic $N\times N$ matrix.
	\begin{enumerate}
		\item For $0\leq \ell\leq m$, set \begin{flalign*}
    {}_J\CR_R(\ell)\coloneqq \frac{R[X]}{(XJX^t,X^tJX,\wedge^{\ell+1}X)}. 
\end{flalign*}
        \item Suppose $N=2m$ is even. Set \begin{flalign*}
    {}_J\CR^\pm_R \coloneqq \frac{R[X]}{(XJX^t,X^tJX,\wedge^{m+1}X)+{}_J\CI^\mp}, 
\end{flalign*}
where ${}_J\CI^\mp$ denotes the ideal generated by $$[U:U'](X)\mp \sgn(\tau_U)\sgn(\tau_{U'})[U^\perp:U^{'\perp}](X)$$ for all $U,U'\in\sB$. By definition, ${}_J\CR_R^\pm$ is a quotient ring of ${}_J\CR_R(m)$. 
        \item Suppose $N=2m$. Set \begin{flalign*}
			{}_J\CR_{R,1}^+\coloneqq {}_J\CR^+_R/\fp_1={}_J\CR_R(m)/\fp_1, \quad {}_J\CR_{R,2}^+\coloneqq {}_J\CR_R^+/\fp_2={}_J\CR_R(m)/\fp_2,
		\end{flalign*}
		where $\fp_1$ denotes the ideal generated by \begin{flalign*}
			[U:U'](X) - \sgn(\tau_U)[U^\perp:U'](X),\quad [U:U'](X)-\sgn(\tau_{U'})[U:{U'}^\perp](X)
		\end{flalign*}
		for all $U,U'\in \sB$; and $\fp_2$ denotes the ideal generated by \[ [U:U'](X) + \sgn(\tau_U)[U^\perp:U'](X),\quad [U:U'](X)+\sgn(\tau_{U'})[U:{U'}^\perp](X)\]
		for all $U,U'\in\sB$.
		
		Clearly, ${}_J\CR_{R,j}^+$ is a quotient ring of ${}_J\CR_R^+$ for $j=1,2$.
	\end{enumerate} 
	
For simplicity, we will write $\CR_R(\ell)$ for ${}_J\CR_R(\ell)$, and write $\CR^\pm_R$ for ${}_J\CR^\pm_R$, and write $\CR^+_{R,j}$ for ${}_J\CR^+_{R,j}$, $j=1,2$. If the base ring $R$ is clear, we further abbreviate to $\CR(\ell)$, $\CR^\pm$ or $\CR^+_j$. 
\end{defn}

\subsection{$SO(N)$-action on $\CR^\pm$ }
Denote by $\GL(N)$ the general linear group over $R$. Then $\GL(N)$ naturally acts on a free $R$-module of rank $N$ with a basis indexed by $\sI$.  Denote by $\RO(N)=\RO(J)$ (resp. $\SO(N)$) the (resp. special) orthogonal group over $R$ attached to the symmetric matrix $J$. We write $GL(N)$, $O(N)$ and $SO(N)$ for the group of $R$-points of the corresponding algebraic groups.  

Note that the polynomial ring $R[X]$ of $N^2$ variables has a natural structure of a $GL(N)$-bimodule: for $g_1,g_2\in GL(N)$ and $f(X)\in R[X]$, define \[g_2f(X)g_1\coloneqq f(g_1Xg_2). \]
By construction, this $GL(N)$-bimodule structure on $R[X]$ induces an $O(N)$-bimodule structure on $\CR(\ell)$ for $0\leq \ell\leq m$.

In the rest of the paper, we will use the following facts about minors in linear algebra. 
\begin{lemma}\label{lem-linearalg}
    \begin{enumerate}
        \item ({\cite[Theorem 2.4.1]{prasolov1994problems}}) Let $A$ be a $d\times d$ matrix for a positive integer $d$. Fix a subset $F=\cbra{i_1,\ldots,i_p}\sset D\coloneqq \cbra{1,2,\ldots,d}$. The Laplace expansion for $F$ is \[\det(A) = \sum_{\overset{U\sset D}{|U|=p}}\varepsilon(F,U) [F:U](A)[D\backslash F:D\backslash U](A), \]
        where $\varepsilon(F,U)\in\cbra{\pm 1}$ depends on the orders of the elements in $F,U$ and their complements $D\backslash F$, $D\backslash U$. 
        \item ({\cite[Theorem 2.3]{prasolov1994problems}}) Let $A$ (resp. $B$) be a $d\times e$ (resp. $e\times d$) matrix. Let $S$ and $S'$ be two subsets of $D=\cbra{1,2,\ldots,d}$ of cardinality $p\leq \min\cbra{d,e}$. Fix an order of the elements in $S$ (resp. $S'$). The Binet-Cauchy formula is \[[S:S'](AB) = \sum_{\overset{U\sset D}{|U|=p}}[S:U](A)[U:S'](B), \]
        where $U$ is written in increasing order. 
        \item \cite[Lemma A.1(e)]{caracciolo2013algebraic} Let $A$ be an invertible matrix of size $d\times d$. Let $S$ and $S'$ be two subsets of $D=\cbra{1,2,\ldots,d}$ of cardinality $p$, where elements are written in increasing order. The Jacobi's identity on minors is \[[S:S'](A)= \det(A)\inverse(-1)^{\Sigma S+\Sigma S'}[D\backslash S':D\backslash S](A\inverse),  \] 
        where $\Sigma S$ and $\Sigma S'$ denote the sum of elements in $S$ and $S'$ respectively. 
        
        In particular, if $d$ is even and $p=d/2$, then,
        by \eqref{eq-SigmaS}, the above identity can also be expressed as \begin{flalign}
        	[S:S'](A)= \det(A)\inverse\sgn(\sigma_S)\sgn(\sigma_{S'})[D\backslash S':D\backslash S](A\inverse).  \label{42}
        \end{flalign} 
    \end{enumerate}
\end{lemma}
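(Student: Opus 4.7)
All three identities are classical results in the determinantal calculus, so my strategy is to point out the short self-contained derivations of each and note that they also appear in the cited references. Throughout one writes $\det A = \sum_{\sigma\in S_d}\mathrm{sgn}(\sigma)\prod_i A_{i,\sigma(i)}$ and exploits multilinearity of the determinant in rows and columns.

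For part (1), the Laplace expansion, the plan is to partition $S_d$ by the image $\sigma(F)$. For each $p$-subset $U\subset D$, those $\sigma$ with $\sigma(F)=U$ factor uniquely (up to a shuffle sign) as a permutation of $F\to U$ followed by a permutation of $D\setminus F\to D\setminus U$. Collecting terms gives exactly the bideterminant sum, with the sign $\varepsilon(F,U)\in\{\pm1\}$ recording the shuffle that re-sorts $(F,D\setminus F)$ and $(U,D\setminus U)$ into the natural order; this is the argument in \cite[Thm.~2.4.1]{prasolov1994problems}.

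For part (2), the Binet--Cauchy formula, I would expand $(AB)_{ij}=\sum_{k}A_{ik}B_{kj}$ and pull the sums outside the $p\times p$ determinant $[S:S'](AB)$ using multilinearity in the columns. The result is a sum over all tuples $(k_1,\dots,k_p)$, and the submatrix of $A$ indexed by a tuple with a repeated column has zero determinant. Hence only strictly increasing $p$-tuples (equivalently, $p$-subsets $U\subset D$) survive, and grouping permutations on the selected columns recovers the stated product of bideterminants, matching \cite[Thm.~2.3]{prasolov1994problems}.

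For part (3), Jacobi's identity, the cleanest route is via the adjugate extended to compound matrices. One way is to apply part (1) to the block matrix
\[
M=\begin{pmatrix} A & I \\ -I & 0 \end{pmatrix}
\]
in two different ways, using $\det M=\det A$ and the identity $M^{-1}$-style manipulations to express $p\times p$ minors of $A^{-1}$ in terms of complementary $(d-p)\times(d-p)$ minors of $A$; the sign tracks as $(-1)^{\Sigma S+\Sigma S'}$ after reordering. Alternatively, one invokes the compound-matrix identity $\mathrm{adj}_p(A)\cdot \binom{d}{p}=\det(A)^{p-1}A^{-1}$ restricted to a $p\times p$ minor. Specializing to even $d$ and $p=d/2$, one substitutes $(-1)^{\Sigma S}=\mathrm{sgn}(\sigma_S)\cdot(-1)^{\lceil d/2\rceil}$ from \eqref{eq-SigmaS}, and the two factors of $(-1)^{\lceil d/2\rceil}$ cancel to give the symmetric form \eqref{42}. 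The main obstacle is purely bookkeeping of the signs; since each part is a standard identity and handed off to \cite{prasolov1994problems,caracciolo2013algebraic}, the argument is essentially a citation.
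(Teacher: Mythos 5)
Your proposal is correct and matches the paper's treatment, which simply cites these three classical facts to \cite{prasolov1994problems} and \cite{caracciolo2013algebraic} without reproving them; your sketches of the standard arguments (permutation partitioning for Laplace, column multilinearity for Binet--Cauchy, block-matrix or adjugate manipulations for Jacobi) are sound. One tiny slip: when applying \eqref{eq-SigmaS} in part (3), the correct substitution is $(-1)^{\Sigma S}=\sgn(\sigma_S)\cdot(-1)^{\lceil p/2\rceil}$ (with $p=d/2$ the cardinality, not $\lceil d/2\rceil$), but since this factor appears squared it cancels anyway and \eqref{42} follows as you say.
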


\begin{lem}\label{lem-SOnaction}
    The $O(N)$-action on $\CR(m)$ induces an $SO(N)$-action on $\CR^\pm$.
\end{lem}
\begin{proof}
    The only non-trivial thing is to show that for $g_1,g_2\in SO(N)$, we have $g_2{}_J\CI^\pm g_1\sset {}_J\CI^\pm$. Let $S,T\sset \sI$ be of cardinality $m$. Recall that we assume $N=2m$ is even when considering $\CR^\pm$.

    With respect to the new index set $\sI$, the identity \eqref{42} in Lemma \ref{lem-linearalg} (3) for $g_1$ becomes \begin{flalign*}
        [S:T](g_1) = \det(g_1)\inverse\sgn(\tau_S)\sgn(\tau_T)[T^c:S^c](g_1\inverse).
    \end{flalign*}
    Since $\det(g_1)=1$ and $g_1^tJg_1=J$, we have $g_1\inverse=Jg_1^tJ$ and \begin{flalign*}
        [S:T](g_1) &=\sgn(\tau_S)\sgn(\tau_T)[T^c:S^c](Jg_1^tJ)\\ &=\sgn(\tau_S)\sgn(\tau_T)[T^\perp:S^\perp](g_1^t)\\ &= \sgn(\tau_S)\sgn(\tau_T)[S^\perp:T^\perp](g_1).
    \end{flalign*}
    Therefore, \begin{flalign*}
        [S:T](X)g_1 &= [S:T](g_1X) =\sum_{U\in\sB}[S:U](g_1)[U:T](X) \text{\ (by Lemma \ref{lem-linearalg}(2)) }\\ &=\sum_{U\in\sB}\sgn(\tau_S)\sgn(\tau_U)[S^\perp:U^\perp](g_1)[U:T](X).
    \end{flalign*}
    By Lemma \ref{lem-linearalg} (2), we have  \begin{flalign*}
        [S^\perp:T^\perp](X)g_1 &= \sum_{U\in\sB}[S^\perp:U](g_1)[U:T^\perp](X)\\ &=\sum_{U\in\sB} [S^\perp:U^\perp](g_1)[U^\perp:T^\perp](X),
    \end{flalign*}
    where the second equality holds because taking $\perp$ is an involution on $\sB$.
    Set $$f_{S,T}(X)\coloneqq [S:T](X)\mp \sgn(\tau_S)\sgn(\tau_T) [S^\perp:T^\perp](X)\in{}_J\CI^\mp.$$ We obtain that \begin{flalign*}
        f_{S,T}(X)g_1 = \sum_{U\in\sB} \rbra{\sgn(\tau_U)\sgn(\tau_S)[S^\perp:U^\perp](g_1)} f_{U,T}(X) \in{}_J\CI^\mp.
    \end{flalign*}
    Thus, ${}_J\CI^\mp$ is stable under the right action of $SO(N)$. A similar argument shows that it is also stable under the left action of $SO(N)$, which proves the lemma. 
\end{proof}
\begin{remark}\label{rmk+-}
    The proof of Lemma \ref{lem-SOnaction} also implies that any element $g\in O(N)$ with $\det(g)=-1$ takes ${}_J\CI^+$ to ${}_J\CI^-$. It follows that $\CR^+$ is isomorphic to $\CR^-$. 
\end{remark}

\subsection{Basics on tableaux} \label{subsec-tableauxbasics}
We will mostly follow the notation in \cite{cliff2008basis}. Recall the ordered set \begin{flalign*}
    \sI\coloneqq \begin{cases}
    	\cbra{\ov{1} <1<\ov{2}<2<\cdots<\ov{m}<m} &\text{if $N=2m$;} \\ \cbra{\ov{1} <1<\ov{2}<2<\cdots<\ov{m}<m<0} &\text{if $N=2m+1$.}    
    \end{cases} 
\end{flalign*}

\begin{defn}\label{defn-tableaux}
   \begin{enumerate}
       \item A \dfn{partition} $\lambda$ of a positive integer $d$ into $r$ parts is given by writing $d$ as a sum $d=\lambda_1+\lambda_2+\cdots+\lambda_r$ of positive integers where $\lambda_1\geq \lambda_2\geq \cdots\geq \lambda_r$. Denote by $|\lambda|$ the sum $d$. Sometimes we will write $\lambda=(\lambda_1\geq \lambda_2\geq\cdots \geq \lambda_r)$.
       \item  A \dfn{tableau} of shape $\lambda$ (or a \dfn{$\lambda$-tableau}) is a left justified array with $r$ rows, where the $i$-th row consists of $\lambda_i$ entries from the set $\sI$. For example, \[T=\begin{pmatrix}
           \ov{1} &\ov{2}\\ \ov{2} &2\\ 2
       \end{pmatrix} \]  is a tableau of shape $\lambda=(2\geq 2\geq 1)$.
       \item The \dfn{conjugate} $\lambda'$ of a partition $\lambda$ is the partition whose parts are the column lengths of a tableau of shape $\lambda$. We write $\lambda'=(\lambda_1'\geq\lambda_2' \geq\cdots\geq \lambda_{r'}')$ for some integer $r'$.
       \item Denote by $T^i$ the $i$-th column of $T$. Let $|T^i|$ denote the length of the column $T^i$. Then $|T^i|=\lambda_i'$.
       \item Let $T$ and $T'$ be two tableaux of the same shape. We write $$T\prec T'$$ if $T\neq T'$, and in the right-most column in which there is a differing entry, the top-most entry $i$ of $T$ which differs from an entry $i'$ of $T'$ in the same position satisfies $i<i'$.
       \item We define the following partial order on the set of partitions of with most $N$ parts. We write $\lambda=(\lambda_1\geq \cdots\geq \lambda_k)< \mu=(\mu_1\geq \cdots\geq \mu_\ell)$ if $\mu\neq \lambda$ and \begin{itemize}
           \item $|\lambda|<|\mu|$, or
           \item $|\mu|=|\lambda|$ and $\sum_{j=1}^i\lambda_j\leq \sum_{j=1}^i\mu_j$ for all $1\leq i\leq k$. Here, we set $\mu_j=0$ if $j>\ell$.
       \end{itemize}  
   \end{enumerate}
\end{defn}

\begin{defn}\label{defn-onstandard}
    Let $T$ be a tableau. 
    \begin{enumerate}
        \item We say $T$ is \dfn{$GL(N)$-standard} if it has at most $N$ rows, and if the entries in each column are strictly increasing downward, and the entries in each row are non-decreasing from the left to the right.
        \item For $i=1,2\ldots,m$, we let $\alpha_i$ (resp. $\beta_i$) be the number of entries less than or equal to $i$ in the first (resp. second) column of $T$. Let $T(i,j)$ denote the entry in row $i$ and column $j$ of $T$. 

        We say $T$ is \dfn{$O(N)$-standard}  if it is $GL(N)$-standard, and if 
        \begin{enumerate}
            \item $\lambda_1'+\lambda_2'\leq N$,
            \item for each $i=1,2\ldots,m$, we have \begin{itemize}
                \item [(OS 1)] $\alpha_i+\beta_i\leq 2i$;
                \item [(OS 2)] if $\alpha_i+\beta_i=2i$ with $\alpha_i>\beta_i$ and $T(\alpha_i,1)=i$ and $T(\beta_i,2)=\bari$, then $T(\alpha_i-1,1)=\bari$;
                \item [(OS 3)] if $\alpha_i+\beta_i=2i$ with $\alpha_i=\beta_i (=i)$ and if $\bari,i$ occur in the $i$-th row of $T$, with $\bari$ in $T^1$ and $i$ in $T^b$ for some $b\geq 2$, then $T(i-1,b)=\bari$.
            \end{itemize}
        \end{enumerate}
    \end{enumerate}
\end{defn}

\begin{defn}
    \begin{enumerate}
        \item A \dfn{bitableau} $[S:T]$ of shape $\lambda$ consists of two tableaux $S$ and $T$ of the same shape $\lambda$. For example, \[[S:T]=\left[\begin{matrix}
            1 &1\\ 2 &\ov{2}\\ 3
        \end{matrix}\ :\ \begin{matrix}
            \ov{1} &1\\ \ov{2} &2\\ 3
        \end{matrix} \right] \] is a bitableau of shape $(2\geq 2\geq 1)$.  
       \item A bitableau $[S:T]$ is called $GL(N)$-standard (resp. $O(N)$-standard) if both $S$ and $T$ are $GL(N)$-standard (resp. $O(N)$-standard).
       \item Let $R$ be a commutative ring. Let $X$ be an $N\times N$ matrix with columns and rows indexed by $\sI$. For a bitableau \[[S:T]=\left[\begin{array}{c}
            a_1  \\ a_2\\ \vdots\\ a_r
       \end{array}: \begin{array}{c}
            b_1  \\ b_2\\ \vdots\\ b_r
       \end{array} \right] \]
       with one column, we use $[S:T](X)$ to denote the determinant of the $r\times r$ submatrix of $X$ formed by the rows of indices $a_1,\ldots, a_r$ and columns of indices $b_1,\ldots,b_r$, i.e. $$[S:T](X)\coloneqq \sum_{\tau\in S_r}\sgn(\tau)x_{a_1\tau(b_1)}\cdots x_{a_r\tau(b_r)}\in R[X], $$ where $S_r$ denotes the permutation group on the set $\cbra{b_1,\ldots,b_r}$.
       For a general bitableau $[S:T]$ of shape $\lambda$, define \[[S:T](X)\coloneqq \prod_{i=1}^{\lambda_1}[S^i:T^i](X).\]
       We say $[S:T](X)$ is the \dfn{bideterminant} in $R[X]$ attached to $[S:T]$.

       For a quotient ring $R'$ of $R[X]$ and a bitableau $[S:T]$, the bideterminant $[S:T](X)$ maps to an element in $R'$, which is still denoted by $[S:T](X)$ (or simply $[S:T]$).
    \end{enumerate}
\end{defn}

\subsection{Straightening relations}
Writing a bitableau as a linear combination of ``standard" ones is often called \dfn{straightening}. Let $R$ be a $\BZ[1/2]$-algebra. Fix an integer $\ell$ such that $0\leq \ell\leq m$.  Recall that (see Definition \ref{defn-RNJ})
\[  \CR(\ell)= {}_J\CR_R(\ell)= \frac{R[X]}{(XJX^t,X^tJX,\wedge^{\ell+1}X)}.\]

\begin{lemma} \label{lem-L=0}
	Let $S$ and $T$ be two tableaux of the same shape $\lambda$ and of two columns with $\lambda_1'\leq \ell$. Let $a$ be a positive integer less than or equal to $ \lambda_2'(\leq \lambda_1'\leq \ell)$. Suppose that $C$ is a (possibly empty) subset of $\sI$ consisting of $c$ elements, where $c<a$. Let $S_0$ be the tableau obtained from $S$ by deleting its first $a$ rows. Then in $\CR(\ell)$, we have
	\[  L\coloneqq \sum_{\overset{i_1,\ldots,i_a\in\sI\backslash C}{i_1<\cdots<i_a} }\left[
        \begin{array}{cc}
             i_1 & \bari_1 \\ 
              i_2&\bari_2 \\
               \vdots&\vdots\\
              i_a&\bari_a\\
              \multicolumn{2}{c}{S_0}
               \end{array}:T\right]  =0.
	\]
\end{lemma}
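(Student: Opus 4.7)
The plan is to reduce $L$ to a single inner-determinant identity via Laplace expansion on the first $a$ rows, and then to use the relation $X^tJX=0$ together with a rank argument (via Binet--Cauchy) to show that the inner sum vanishes in $\CR$, all without dividing by any factorial.

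\textbf{Step 1 (Laplace expansion).} Writing $S_0^1, S_0^2$ for the two columns of $S_0$, I would apply Laplace expansion along the top $a$ rows of each of the two column bideterminants making up $[S(I):T](X)$:
\begin{flalign*}
	[S(I)^1 : T^1](X) = \sum_{A \in \binom{T^1}{a}} \varepsilon_A\, [i_1,\ldots,i_a : A](X)\, [S_0^1 : T^1 \setminus A](X),
\end{flalign*}
with an analogous expansion for the second column giving signs $\varepsilon_B$ and subsets $B \in \binom{T^2}{a}$. Crucially, $\varepsilon_A, \varepsilon_B$ and the ``tail'' bideterminants do not depend on the summation variable $I = \{i_1 < \cdots < i_a\}$, so they can be pulled outside the sum. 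The claim $L = 0$ therefore reduces to showing, for each $A \in \binom{T^1}{a}$ and $B \in \binom{T^2}{a}$,
\begin{flalign*}
	\Sigma(A,B) := \sum_{\substack{I \sset \sI \setminus C \\ |I| = a}}\, [I : A](X) \cdot [\bar{I} : B](X) = 0 \quad \text{in } \CR.
\end{flalign*}

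\textbf{Step 2 (Binet--Cauchy and the rank drop).} Form the $a \times a$ matrix $M$ with
\begin{flalign*}
	M_{kl} := \sum_{i \in \sI \setminus C}\, x_{i,A_k}\, x_{\bari,B_l}.
\end{flalign*}
By Lemma \ref{lem-linearalg}(2), $\Sigma(A,B) = \det(M)$. The relation $X^tJX = 0$ reads $\sum_{i \in \sI} x_{i,\alpha}\, x_{\bari,\beta} = 0$ for all $\alpha, \beta$, so $M_{kl} = -\sum_{i \in C} x_{i,A_k}\, x_{\bari,B_l}$ in $\CR$, which exhibits $M$ as $-P''Q''$, with $P''$ the $a \times c$ matrix $(x_{i,A_k})_{i \in C}$ and $Q''$ the $c \times a$ matrix $(x_{\bari,B_l})_{i \in C}$. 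A second application of Lemma \ref{lem-linearalg}(2) to the $a \times a$ product $P''Q''$ writes $\det(P''Q'')$ as a sum over $a$-subsets of the $c$-element set $C$; since $c < a$ by hypothesis, this index set is empty and $\det(P''Q'') = 0$ identically in $R[X]$. Hence $\det(M) = 0$ in $\CR$, so $\Sigma(A,B) = 0$, and Step 1 concludes $L = 0$.

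The main technical point is Step 2: the hypothesis $c < a$ is most cleanly exploited after $X^tJX = 0$ is used to swap the sum over the large set $\sI \setminus C$ for the sum over the small set $C$. Only then does the rank-$c$ factorization $M = -P''Q''$ appear, and the desired vanishing drops out for free from the emptiness of $\binom{C}{a}$. This trick also sidesteps any division by $a!$, which matters because $R$ is only assumed to be a $\BZ[1/2]$-algebra.
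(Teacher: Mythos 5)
Your proof is correct and takes essentially the same route as the paper: Laplace expansion along the top $a$ rows, use of $X^tJX=0$ to replace the sum over $\sI\setminus C$ by a (sign-flipped) sum over $C$, and a Binet--Cauchy/rank argument to kill the resulting $a\times a$ determinant because $c<a$. The only cosmetic difference is that you isolate a single $a\times a$ matrix $M$ per pair $(A,B)$ and argue $\det(M)=0$ directly, while the paper packages the same vanishing as the statement that every $a\times a$ minor of $-X^tI_CJX$ vanishes; the two are the same computation.
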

\begin{proof}
    Denote by $S_0^1$ (resp. $S_0^2$) the first (resp. second) column of $S_0$ with column length $f$ (resp. $g$). Denote $A=\cbra{i_1,\ldots,i_a}$ and $\ol{A}=\cbra{\bari_1,\ldots,\bari_a}$. By Lemma \ref{lem-linearalg} (1),  we have \begin{flalign*}
        L &=\sum_{\overset{A\sset \sI\backslash C}{|A|=a}}\rbra{\sum_{\overset{U_1\sset T^1}{|U_1|=f}}\varepsilon(S_0^1,U_1)[S_0^1:U_1](X)[A:T^1\backslash U_1](X)}\\ &\hspace{3cm}\times \rbra{\sum_{\overset{U_2\sset T^2}{|U_2|=g}}\varepsilon(S_0^2,U_2)[S_0^2:U_2](X)[\ol{A}:T^2\backslash U_2](X)}\\ &=\sum_{\overset{A\sset\sI\backslash C}{|A|=a}}\rbra{\sum_{\overset{U_1\sset T^1}{|U_1|=f}}\varepsilon(S_0^1,U_1)[S_0^1:U_1](X)[A:T^1\backslash U_1](X)}\\ &\hspace{3cm}\times \rbra{\sum_{\overset{U_2\sset T^2}{|U_2|=g}}\varepsilon(S_0^2,U_2)[S_0^2:U_2](X)[A:T^2\backslash U_2](JX)}\\
        &=\sum_{\overset{U_1\sset T^1}{|U_1|=f}}\sum_{\overset{U_2\sset T^2}{|U_2|=g}}\varepsilon(S_0^1,U_1)\varepsilon(S_0^2,U_2)[S_0^1:U_1](X)[S_0^2:U_2](X)\\ &\hspace{3cm} \times \sum_{\overset{A\sset\sI\backslash C}{|A|=a}}[A:T^1\backslash U_1](X)[A:T^2\backslash U_2](JX).
    \end{flalign*}
    Denote \[f(S_0,U_1,U_2)\coloneqq \varepsilon(S_0^1,U_1)\varepsilon(S_0^2,U_2)[S_0^1:U_1](X)[S_0^2:U_2](X).  \]
    Set $p\coloneqq |\sI\backslash C| =N-c$. Define $$\CP_{\sI\backslash C}$$ the linear operator taking an $N\times N$ matrix $X$ to the $p\times N$ matrix $\CP_{\sI\backslash C}X$ formed by selecting the rows of $X$ whose indices are in $\sI\backslash C$.  We also write $\CP_{\sI\backslash C}$ for the corresponding $p\times N$ matrix. More concretely, $\CP_{\sI\backslash C}$ is the $p\times N$ matrix obtained from the identity matrix $I_N$ by selecting the rows whose indices are in $\sI\backslash C$. For $A\sset \sI\backslash C$, we have \[ [A:T^1\backslash U_1](X)=[A:T^1\backslash U_1](\CP_{\sI\backslash C}X) = [T^1\backslash U_1:A]((\CP_{\sI\backslash C}X)^t), \]
    and \[[A:T^2\backslash U_2](JX) =[A:T^2\backslash U_2](\CP_{\sI\backslash C}JX). \]
    By Lemma \ref{lem-linearalg} (2), we obtain that \begin{flalign*}
        &\sum_{\overset{A\sset\sI\backslash C}{|A|=a}}[A:T^1\backslash U_1](X)[A:T^2\backslash U_2](JX)\\ = &\sum_{\overset{A\sset\sI\backslash C}{|A|=a}}[T^1\backslash U_1:A](X^t\CP_{\sI\backslash C}^t)[A:T^2\backslash U_2](\CP_{\sI\backslash C} JX)\\ =&[T^1\backslash U_1: T^2\backslash U_2](X^t\CP_{\sI\backslash C}^t\CP_{\sI\backslash C}JX).
    \end{flalign*} 
    Observe that \[\CP_{\sI\backslash C}^t\CP_{\sI\backslash C}=I_{\sI\backslash C},\]
    where $I_{\sI\backslash C}$ is the $N\times N$ diagonal matrix whose diagonal entries are $1$ in the positions indexed by $\sI\backslash C$, and $0$ otherwise. Hence,  \[\CP^t_{\sI\backslash C}\CP_{\sI\backslash C}=I_N-I_{C}, \]
    where $I_{C}$ is the diagonal matrix defined similarly as $I_{\sI\backslash C}$.
    Now we obtain that \begin{flalign*}
        L &=\sum_{\overset{U_1\sset T^1}{|U_1|=f}}\sum_{\overset{U_2\sset T^2}{|U_2|=g}}f(S_0,U_1,U_2)[T^1\backslash U_1:T^2\backslash U_2](X^t(I_n-I_C)JX)\\ &= \sum_{\overset{U_1\sset T^1}{|U_1|=f}}\sum_{\overset{U_2\sset T^2}{|U_2|=g}}f(S_0,U_1,U_2)[T^1\backslash U_1:T^2\backslash U_2](-X^tI_CJX) \quad \text{(since $X^tJX=0$ in $\CR(\ell)$)}.
    \end{flalign*}
    As $c<a$, any $a\times a$ minor of $I_C$, and hence of $X^tI_CJX$, is zero in $\CR(\ell)$. This completes the proof of the lemma.  
\end{proof}

Lemma \ref{lem-L=0} is the analogue of the key lemma \cite[Lemma 4.2]{cliff2008basis}. In \loccit, the author proved the statement for the ring \[\frac{K[X]}{(XJX^t-I_N,X^tJX-I_N)}, \] where $K$ is a field of characteristic zero. Unlike the argument in \loccit, our proof works in arbitrary characteristic. 

In the following, by a \dfn{signed sum} of some quantities $x_1,x_2,\ldots,x_r$, we mean a linear combination $\sum_{i=1}^r\varepsilon x_i$ where each $\varepsilon_i\in \cbra{\pm 1}$.

\begin{lemma}\label{lem-os123}
	Let $S$ and $T$ be $\lambda$-tableaux having two columns. 
    \begin{enumerate}
        \item Suppose that $S$ is not $O(N)$-standard violating (OS1) in Definition \ref{defn-onstandard}. Then \begin{flalign}
        	 [S:T]=\sum_{U\in\CS,U\neq S}-[U:T]  \label{eq43}
        \end{flalign} 
        in $\CR(\ell)$,
	where $\CS$ is a set of $\lambda$-tableaux such that $S\prec U$ and $\CS$ does not depend on $T$.
        \item Suppose that $S$ is not $O(N)$-standard violating (OS2). Then $[S:T]$ may be expressed as a signed sum of bideterminants as in \eqref{eq43}.
        \item Suppose that $S$ is not $O(N)$-standard violating (OS3). Then $[S:T]$ can be expressed as one-half \footnote{Note that we assume $2$ is invertible in $\CR(\ell)$.} a signed sum of bideterminants of two types: (i) of shape $\mu$ where $\mu< \lambda$; (ii) of the form $[U:T]$ where $S\prec U$. The tableaux $U$ in (ii) and their signs in the signed sum are independent of $T$.
    \end{enumerate}
\end{lemma}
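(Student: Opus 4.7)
I plan to prove all three parts by repeated application of Lemma \ref{lem-L=0}, following the straightening strategy of Cliff \cite{cliff2008basis}, adapted here to the setup where $XJX^t=0$ rather than $=I$. The unifying idea is to choose parameters $(a, C, S_0)$ in Lemma \ref{lem-L=0} so that, possibly after permuting rows within a column of $S$ at the cost of a tracked sign, $[S:T]$ appears as one of the summands in the resulting vanishing relation, while every other summand involves a $\lambda$-tableau $U$ strictly greater than $S$ in the order $\prec$ (or, for case (3), a bideterminant of strictly smaller shape). Solving for $[S:T]$ then yields the desired expression; since $(a, C, S_0)$ are determined by $S$ alone, the resulting set $\CS$ of tableaux and their signs are independent of $T$.

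For (1), I would let $i$ be the smallest index with $\alpha_i+\beta_i>2i$, set $a=\alpha_i$, take $S_0$ to be the tail of $S$ below row $a$, and choose $C=\{\bar 1,1,\ldots,\bari,i\}\setminus\{S(1,1),\ldots,S(a,1)\}$, so that $c=2i-a<a$ holds by the OS1 violation, matching the hypothesis of Lemma \ref{lem-L=0}. After pre-permuting the top $a$ rows of $S^2$ so that the $(i_j,\bari_j)$ configuration required by the lemma appears, $S$ itself is the summand with $\{i_1,\ldots,i_a\}=\{S(1,1),\ldots,S(a,1)\}$; every other choice of indices must include an entry $>i$ in column one, and replacing an entry $\leq i$ by one $>i$ in a column of $S$ at a position in the first $a$ rows produces a tableau strictly $\prec$-larger than $S$. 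Solving for $[S:T]$ gives \eqref{eq43}. For (2) the same strategy applies in the boundary case $\alpha_i+\beta_i=2i$ with $\alpha_i>\beta_i$: the failure of OS2 (i.e.\ $T(\alpha_i-1,1)\neq\bari$) is exactly what permits the analogous choice of $C$ with $c<a$, and the $\prec$-comparison of the remaining summands proceeds as before.

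For (3), the symmetric boundary case $\alpha_i=\beta_i=i$ requires more care because both columns play symmetric roles in the $(i_j,\bari_j)$ pairing demanded by Lemma \ref{lem-L=0}. Applying the lemma now yields a relation in which $[S:T]$ is produced by two distinct index choices (once by reading the top $a$ column-one entries of $S$, once by reading the bar-partners of the top $a$ column-two entries and relabelling), accounting for the factor of $\tfrac12$; in addition, certain boundary summands force two rows within a single column to coincide, which by antisymmetry of the determinant either kills the summand or collapses it to a product of smaller minors yielding a bideterminant of shape $\mu<\lambda$ (the type (i) terms in the statement). The main obstacle I anticipate is the careful sign bookkeeping throughout---both when permuting rows within $S^2$ to match the form of Lemma \ref{lem-L=0} and when comparing the resulting tableaux in the $\prec$ order---together with verifying in case (3) that no undesired summands of shape $\lambda$ with $U\preceq S$ survive after the double-counting and the degeneracy collapses are accounted for.
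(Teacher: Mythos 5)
You are following the same high-level strategy as the paper: the paper's proof is literally "follow Cliff's Lemmas 5.1 and 5.2, replacing Cliff's Lemma 4.2 with Lemma~\ref{lem-L=0}, and note that Cliff's GL-straightening Lemma 3.2 also holds in $\CR$". Your attempt to fill in the details, however, contains a genuine gap.

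The crux is your step ``After pre-permuting the top $a$ rows of $S^2$ so that the $(i_j,\bari_j)$ configuration required by the lemma appears, $S$ itself is the summand with $\{i_1,\ldots,i_a\}=\{S(1,1),\ldots,S(a,1)\}$.'' For $[S:T]$ to equal (up to sign) a summand of $L$, the set $\{S(1,2),\ldots,S(a,2)\}$ must coincide exactly with $\{\,\bari_j : i_j\in\{S(1,1),\ldots,S(a,1)\}\,\}$. That is a strong structural constraint on $S$: the top-$a$ entries of the second column must be the bar-partners of the top-$a$ entries of the first column. This is \emph{not} a consequence of failing OS1 (nor of being $GL(N)$-standard). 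A row permutation within a single column-minor only reorders entries; it cannot conjure up the bar-partners if they are not already present. So as stated, $[S:T]$ does not in general appear as a summand of $L$, and you cannot simply ``solve for $[S:T]$''. (There is also a smaller unaddressed constraint: Lemma~\ref{lem-L=0} requires $a\leq\lambda_2'$, whereas your choice $a=\alpha_i$ only satisfies $a\leq\lambda_1'$.) The actual mechanism in Cliff, which the paper invokes, combines the vanishing of $L$ with the $GL(N)$-straightening relations (Cliff's Lemma 3.2, explicitly cited in the paper's proof): one applies $GL$-straightening to the summands of $L=0$, identifies the leading $GL$-standard term, and only then isolates $[S:T]$. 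Your proposal omits the $GL$-straightening step entirely except for a brief gesture at it in part (3), which is insufficient to close the argument in parts (1) and (2) as well.
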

\begin{proof}
	(1) and (2) follow from the same method as in \cite[Lemma 5.1, 5.2]{cliff2008basis}, with Lemma 4.2 in \loccit\ replaced by Lemma \ref{lem-L=0} here. Note that the expression $L$ appearing here corresponds to $\wt{L}$ in \loccit. In fact, our situation is simpler than in \cite{cliff2008basis}, since we have $L=0$ in $\CR(\ell)$. Similarly, (3) follows from the arguments in \cite[Lemma 5.1]{cliff2008basis}. Note that the proof in \loccit\ also uses \cite[Lemma 3.2]{cliff2008basis}, which remains valid in our setting, since the straightening relation in \cite[Lemma 3.2]{cliff2008basis} lives in $R[X]$, and hence holds in $\CR(\ell)$ as well. 
\end{proof}

\begin{prop}\label{prop-strai}
    Assume $\lambda_1'\leq \ell$. Let $S,T$ be two $\lambda$-tableaux. 
    Then in $\CR(\ell)$, we have \begin{flalign*}
        [S:T]=\sum_{U}a_U[U:T]+s,  \label{ST1}
    \end{flalign*} 
    where the $\lambda$-tableaux $U$ in the sum are $O(N)$-standard, each $a_U\in R$ and is independent of $T$, and $s$ is a linear combination of bideterminants of shapes $\mu<\lambda$. Applying $[S:T]$ to $X^t$, we obtain  \begin{flalign*}
        [T:S]=\sum_{U}a_U[T:U]+s.  
    \end{flalign*} 
\end{prop}
\begin{proof}
    Using Lemma \ref{lem-os123}, the proof of \cite[Theorem 5.1]{cliff2008basis} also works here.  
\end{proof}

\begin{corollary} \label{coro-span}
    Denote \begin{flalign*}
        \CC(\ell)\coloneqq \cbra{O(N)\text{-standard bitableaux } [S:T]\ |\ \text{the length of the first column $\leq \ell$} }  .
    \end{flalign*}  
    The ring $\CR(\ell)$ is spanned by bideterminants in $\CC(\ell)$ as an $R$-module. 
\end{corollary}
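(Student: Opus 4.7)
The plan is to combine two simple observations. First, $\CR$ is generated as an $R$-module by monomials in the entries of $X$, and each monomial $x_{i_1 j_1}\cdots x_{i_k j_k}$ is literally the bideterminant $[S:T]$ where $S=(i_1,\ldots,i_k)$ and $T=(j_1,\ldots,j_k)$ are one-row tableaux of shape $(k)$, so $\lambda_1'=1\leq m$. Second, any bideterminant whose first (longest) column has length exceeding $m$ factors through an $(m+1)\times(m+1)$ minor of $X$ and so vanishes in $\CR$ by the defining relation $\wedge^{m+1}X=0$. It therefore suffices to show that every bideterminant $[S:T]$ of shape $\lambda$ with $\lambda_1'\leq m$ lies in the $R$-span of $\CC$.

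I would prove this by induction on $\lambda$ under the partial order of Definition~\ref{defn-tableaux}(6), which is well-founded since it refines $|\lambda|$ and dominance on partitions of a fixed size is a partial order on a finite set. For the inductive step, given $[S:T]$ with $\lambda_1'\leq m$, Proposition~\ref{prop-strai} applied to the left tableau yields
\[
[S:T] = \sum_U a_U\,[U:T] + s_1,
\]
with each $U$ an $O(N)$-standard $\lambda$-tableau, $a_U\in R$, and $s_1$ an $R$-combination of bideterminants of shapes $\mu<\lambda$. Since $J^t=J$, the substitution $X\mapsto X^t$ preserves the defining ideal of $\CR$ and induces a ring automorphism sending $[U:T]$ to $[T:U]$; applying Proposition~\ref{prop-strai} to $[T:U]$ and transposing back produces $[U:T]=\sum_V b_V\,[U:V]+s_U$, where each $V$ is an $O(N)$-standard $\lambda$-tableau. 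Combining,
\[
[S:T] = \sum_{U,V} a_U b_V\,[U:V] + s,
\]
with $s$ an $R$-combination of bideterminants of shapes $\mu<\lambda$ and each $[U:V]$ being $O(N)$-standard. The bound $\lambda_1'\leq m$ forces $\lambda_1'+\lambda_2'\leq 2m\leq N$, so every such $[U:V]$ lies in $\CC$.

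It remains to handle $s$. For each contributing shape $\mu<\lambda$: if $\mu_1'>m$, the corresponding bideterminant vanishes in $\CR$ by the relation $\wedge^{m+1}X=0$; if $\mu_1'\leq m$, the inductive hypothesis expresses it in the $R$-span of $\CC$. This closes the induction. The main subtlety, and the only reason the argument succeeds, is precisely this dichotomy: Proposition~\ref{prop-strai} alone does not control the first-column length of the correction terms $s_1$ and $s_U$, and one must invoke $\wedge^{m+1}X=0$ to kill the shapes with $\mu_1'>m$ before the remaining shapes can be absorbed into the inductive hypothesis.
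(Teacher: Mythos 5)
Your proof is correct and takes essentially the same route as the paper: express elements of $\CR$ as bideterminants, note that shapes with first column longer than $m$ vanish by $\wedge^{m+1}X=0$, and repeatedly apply Proposition~\ref{prop-strai}. The paper leaves the induction implicit in the phrase ``repeatedly applying''; you have merely made explicit the well-foundedness of the order on shapes and the dichotomy $\mu_1'>m$ versus $\mu_1'\leq m$ when absorbing the lower-shape corrections, which is exactly the reasoning the paper's terse proof is relying on.
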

\begin{proof}
     It is clear that $R[X]$ is spanned by bideterminants $[S:T]$, where $S,T$ run through all bitableaux of the same shape. Note that by definition of $\CR(\ell)$, bideterminants $[S:T]$ with $|S^1|>\ell$ vanish in $\CR(\ell)$. By repeatedly applying Proposition \ref{prop-strai}, we obtain the corollary. 
\end{proof}

\subsection{Linear independence of bideterminants}
Let $\CR(\ell)_\red$ denote the associated reduced ring of $\CR(\ell)$. By Corollary \ref{coro-span}, the set of bideterminants in $\CC(\ell)$ also spans $\CR(\ell)_\red$.
In this subsection, we will show that the set $\CC(\ell)$ in Corollary \ref{coro-span} forms an $R$-basis of $\CR(\ell)$, and that if $R=\BZ[1/2]$, then $\CR(\ell)_\red=\CR(\ell)$. When no confusion arises, we sometimes simply write $\CR$ for $\CR(\ell)$. 

\begin{lem}\label{lem-gaction}
    Let $[S:T](X)$ be a bitableau of shape $\lambda$ in $R[X]$. For $g\in GL(N)$, we have 
    \[[S:T](X)g = \sum_U[S:U](g) [U:T](X) \] in $R[X]$,
    where $U$ runs through all column-increasing $\lambda$-tableaux. Similarly, we have \begin{flalign*}
    	g[S:T](X) = \sum_U  [U:T](g)[S:U](X)
    \end{flalign*}
    in $R[X]$.
\end{lem}
\begin{proof}
    (cf. \cite[Lemma 6.1]{cliff2008basis}.) The lemma follows directly by applying the Binet-Cauchy formula in Lemma \ref{lem-linearalg} (2) to each column of the bideterminant $[S:T](gX)$.
\end{proof}


\begin{defn} \label{defn-Tlambda}
    Let $\lambda$ be a partition with at most $N$ parts.
    \begin{enumerate}
        \item The \dfn{canonical} $\lambda$-tableau\footnote{Note that $T^\lambda$ in \cite{cliff2008basis} differs from ours. Our definition seems more natural; for example, our $T^\lambda$ is $O(N)$-standard.}, denoted by $T^\lambda$, is the tableau having each entry in row $j$ equal to $\barj\in\sI$. For example, for $\lambda=(2\geq 2\geq 1)$, the canonical $\lambda$-tableau is \[T^\lambda = \begin{pmatrix}
            \ov{1} &\ov{1}\\ \ol{2} &\ov{2}\\ \ov{3}
        \end{pmatrix}.\]
        Note that $T^\lambda$ is $O(N)$-standard.
        \item Denote by $L^\lambda_{\CR_\red}$ (resp. ${}^\lambda L_{\CR_\red}$) the $R$-span of $[T^\lambda:T]$ (resp. $[T:T^\lambda]$) in $\CR(\ell)_\red$ as $T$ runs through all $\lambda$-tableaux. 
    \end{enumerate}
\end{defn}
By Lemma \ref{lem-gaction}, $L_{\CR_\red}^\lambda$ (resp. ${}^\lambda L_{\CR_\red}$) is a left (resp. right) $O(N)$-module. By a similar proof of Corollary \ref{coro-span}, the module $L_{\CR_\red}^\lambda$ (resp. ${}^\lambda L_{\CR_\red}$) is spanned over $R$ by $[T^\lambda:T]$ (resp. $[T:T^\lambda]$), where $T$ runs through all $O(N)$-standard tableaux. 

Let us briefly review results in \cite{king1992construction}. Let $\BE$ be an $N$-dimensional $\BC$-vector space equipped with a non-degenerate symmetric pairing. Assume that there exists a basis $\cbra{w_i}_{i\in\sI}$ such that the symmetric pairing corresponds to the matrix $J$. Let $\lambda$ be a partition of $l$ with at most $N$-parts.  Let $\BE^{\otimes l}$ denote the $l$-fold tensor product of $\BE$. Then $\BE^{\otimes l}$ admits a $\BC$-basis \[\cbra{w_{i_1i_2\ldots i_l}\coloneqq w_{i_1}\otimes w_{i_2}\otimes\cdots\otimes w_{i_l}\ |\ i_k\in\sI \text{\ for\ $1\leq k\leq l$}}. \] The permutation group $S_l$ naturally acts on $\BE^{\otimes l}$ via \[\sigma w_{i_1i_2\ldots i_l}\coloneqq w_{i_{\sigma\inverse(1)}i_{\sigma\inverse(2)}\ldots i_{\sigma\inverse(l)} } \] for $\sigma\in S_l$. The natural left action of $GL(N)$ on $\BE$ induces an action on $\BE^{\otimes l}$.
\begin{defn} \label{415M}
    \begin{enumerate}
        \item Denote by $t^\lambda$ the tableau formed by filling the elements $$1,2,\ldots,l$$ (in this order) consecutively in the arrays of $t^\lambda$ first passing down the leftmost column and then down subsequently columns taken in turn from left to right. For example, if $\lambda=(2\geq 2\geq 1)$, then \[t^\lambda=\begin{pmatrix}
            {1} &4\\ 2 &{5}\\ {3}
        \end{pmatrix}.\]
        \item Define the column group $C^\lambda$ (resp. $R^\lambda$) as the subgroup of $S_l$ which preserves each column (resp. row) of $t^\lambda$. The \dfn{Young symmetriser} $Y^\lambda\in\BZ[S_l]$ is defined as \[Y^\lambda\coloneqq \sum_{\rho\in R^\lambda}\sum_{\sigma\in C^\lambda}\sgn(\sigma)\rho\sigma, \] where $\sgn(\sigma)$ denotes the signature of the permutation $\sigma$.
        \item For each basis element $w=w_{i_1\ldots i_l}\in \BE^{\otimes l}$, we can associate a $\lambda$-tableau $T^\lambda_w$ by replacing each element $j$ in $t^\lambda$ by $i_j$ for $1\leq j\leq l$. For example, for $\lambda=(2\geq 2\geq 1)$ and $w=w_{4\ol{1}2\ol{3}1}$, \[T^\lambda_w=\begin{pmatrix}
            4 &\ol{3}\\ \ol{1} &1\\ 2
        \end{pmatrix}. \]
        This process allows us to identify the set of all $\lambda$-tableaux with a basis of $\BE^{\otimes l}$.  For a $\lambda$-tableau $T$, we also use $T$ to denote the corresponding basis element in $\BE^{\otimes l}$.
        \item Let $M^\lambda\sset \BE^{\otimes l}$ denote the $\BC$-span of all $Y^\lambda T$, as $T$ varies over all $\lambda$-tableaux. This is an irreducible (left) $GL(N)$-module by \cite[Theorem 2.4]{king1992construction}.
    \end{enumerate}
\end{defn}

\begin{example} \label{ex-ylambda}
   \begin{enumerate}
       \item Let $T^\lambda$ be the canonical $\lambda$-tableau defined in Definition \ref{defn-naive} (1). Then $T^\lambda$ is of the form \[\begin{pmatrix}
        \ol{1} &\ol{1} &\cdots &\ol{1}\\ \ol{2} &\ol{2} &\cdots &\vdots\\ \ol{3} &\vdots &\cdots &\ol{\lambda_k'}\\ \vdots &\ol{\lambda_2'} &\cdots \\ \ol{\lambda_1'}
    \end{pmatrix} \] for some integer $k$.  For any $\rho\in R^\lambda$, it follows by definition that $\rho$ fixes $T^\lambda$. Then we obtain that 
    \begin{flalign*}
        Y^\lambda T^\lambda &= |R^\lambda|\sum_{\sigma\in C^\lambda}\sgn(\sigma)\sigma(T^\lambda)\\ &=|R^\lambda|(w_{\ol{1}}\wedge w_{\ol{2}}\wedge\cdots\wedge w_{\ol{\lambda_1'}})\otimes (w_{\ol{1}}\wedge\cdots\wedge w_{\ol{\lambda_2'}})\otimes\cdots\otimes (w_{\ol{1}}\wedge\cdots\wedge w_{\ol{\lambda_k'}}).
    \end{flalign*}
    Here, we identify $\wedge^d\BE$ with a subspace of $\BE^{\otimes d}$ via
    \[\wedge^d\BE\hookrightarrow \BE^{\otimes d}, v_1\wedge\cdots\wedge v_d\mapsto \sum_{\sigma\in S_d}\sgn(\sigma)v_{\sigma(1)}\otimes\cdots\otimes v_{\sigma(d)}. \]
       \item Suppose that $\lambda$ is a partition of $\ell$ with all $\lambda_i=1$. In other words, a $\lambda$-tableau consists of just one column. Then $R^\lambda=\cbra{1}$ and $C^\lambda=S_l$. For any tableau of shape $\lambda$ \[T=\begin{pmatrix}
           i_1\\ i_2\\ \vdots\\ i_l
       \end{pmatrix},\] we have \begin{flalign*}
           Y^\lambda T = \sum_{\sigma\in S_l} \sgn(\sigma) w_{\sigma\inverse(1)\sigma\inverse(2)\ldots\sigma\inverse(l)}=w_{i_1}\wedge w_{i_2}\wedge\cdots \wedge w_{i_l}.
       \end{flalign*}
       Therefore, $M^\lambda=\wedge^l\BE$.
   \end{enumerate} 
\end{example}

\begin{defn} \label{defn-Olambda}
    Let $U\sset \BE^{\otimes l}$ be the span of all tensors of the form \[\sum_{i\in\sI}x\otimes w_i\otimes y\otimes w_{\bari}\otimes z\in \BE^{\otimes l}, \]
    where $x,y$ and $z$ are elements of some (possibly zero) tensor power of $\BE$. This subspace is invariant under the action of $O(N)$. Define \[O^\lambda\coloneqq M^\lambda/(M^\lambda\cap U).\]
\end{defn}

\begin{lem} \label{lem-Onirreducible}
    Assume $\lambda_1'+\lambda_2'\leq N$. The $O(N)$-module $O^\lambda$ is irreducible, and has a $\BC$-basis given by (the image of) $Y^\lambda T$, where $T$ varies over all $O(N)$-standard tableaux. 
\end{lem}
\begin{proof}
    See \cite[Theorem 3.4, 3.9]{king1992construction}.
\end{proof}

\begin{prop}\label{prop-nonzeromap}
    Suppose that $R=\BZ[1/2]$ and $\lambda_1'\leq \ell$. There exists a non-zero homomorphism of left $O(N)$-modules \[F_{O^\lambda}\colon O^\lambda\ra \CR_{\red,\BC}=\CR(\ell)_\red\otimes_R\BC.\]
\end{prop}
\begin{proof}
    (cf. \cite[\S 6]{cliff2008basis}) Denote 
    \[M\coloneqq \wedge^{\lambda_1'}\BE\otimes \wedge^{\lambda_2'}\BE\otimes\cdots \otimes \wedge^{\lambda_k'}\BE. \]
     Then $M$ admits a basis\[ \beta= \cbra{(w_{i_{11}}\wedge w_{i_{12}}\wedge\cdots\wedge w_{i_{1\lambda_1'}})\otimes (w_{i_{21}}\wedge\cdots\wedge w_{i_{2\lambda_2'}})\otimes\cdots\otimes (w_{i_{k1}}\wedge\cdots\wedge w_{i_{k\lambda_k'}}) },  \] 
     where $i_{pq}\in \sI$ for $1\leq p\leq k$ and $1\leq q\leq \lambda_p'$. Set \[m_1\coloneqq (w_{\ol{1}}\wedge w_{\ol{2}}\wedge\cdots\wedge w_{\ol{\lambda_1'}})\otimes (w_{\ol{1}}\wedge\cdots\wedge w_{\ol{\lambda_2'}})\otimes\cdots\otimes (w_{\ol{1}}\wedge\cdots\wedge w_{\ol{\lambda_k'}}) \in M.  \] By Example \ref{ex-ylambda} (1), we have \begin{flalign}
         Y^\lambda T^\lambda=|R^\lambda|m_1.  \label{eq-tlambda}
     \end{flalign}   
     We index $\beta$ by $\tcbra{m_1,m_2,\ldots,m_D}$ by choosing a total order on $\beta$, where $D$ equals the $\BC$-dimension of $M$. 

     For any $\BC$-algebra $A$, an $A$-point of $\Spec \CR_\BC$ is given by an $N\times N$ matrix  $X=(x_{ij})$ with $x_{ij}\in A$ satisfying $XJX^t=X^tJX=0$ and $\wedge^{\ell+1}X=0$. The basis $(w_i\otimes 1)_{i\in\sI}$ of $\BE_A\coloneqq \BE\otimes_\BC A$ identifies $ \BE\otimes_\BC A$ with $A^N$. Hence, the matrix $X$ acts on $\BE_A$, which also induces an action on $M\otimes_\BC A$. For any $x\in M\otimes_\BC A$, write \[Xx=\sum_{i=1}^D a_i(x)(X) (m_i\otimes 1) \] for some (uniquely determined) $a_i(x)(X)\in A$. The association $X\mapsto a_1(x)(X)$ defines a morphism from $\Spec\CR_\BC$ to $\BA^1$, and hence, an element $F_{M,x}$ in $\CR_\BC$. Define \[F_M\colon M\ra \CR_\BC,\quad F_M(x)\coloneqq F_{M,x}.  \]
     By construction, $F_M$ is a homomorphism of (left) $O(N)$-modules such that \[F_M(m_1)=[T^\lambda:T^\lambda](X).\] Note that for any $\lambda$-tableau $T$, we have $Y^\lambda T\in M$. Thus, $M^\lambda$ (see Definition \ref{415M})  is a subspace of $M$. By restriction, we obtain a map \[F_{M^\lambda}\colon M^\lambda\ra \CR_\BC.\] By \eqref{eq-tlambda}, we have \[F_{M^\lambda}(Y^\lambda T^\lambda)= |R^\lambda| F_{M}(m_1) = |R^\lambda|[T^\lambda:T^\lambda]. \]

     For an $A$-valued point $X=(x_{ij})$ of $\Spec\CR_\BC$ and a tensor $$u=\sum_{i\in\sI}x\otimes w_i\otimes y\otimes w_{\bari}\otimes z$$ in $U$ (see Definition \ref{defn-Olambda}), we have \begin{equation}
         \begin{split}
              Xu &= \sum_{i\in\sI} Xx\otimes Xw_i \otimes Xy\otimes Xw_{\bari}\otimes Xz\\ &=\sum_{i\in\sI} Xx\otimes (\sum_{j\in\sI}x_{ji}w_j) \otimes Xy\otimes (\sum_{k\in\sI}x_{k\bari}w_k)\otimes Xz\\ &= \sum_{j,k\in \sI}\rbra{\sum_{i\in\sI}x_{ji}x_{k\bari} }Xx\otimes w_j\otimes Xy\otimes w_k\otimes Xz.   \label{eqXu}
         \end{split}
     \end{equation} 
     Since $XJX^t=0$, we have $\sum_{i\in\sI}x_{ji}x_{k\bari}=0$, and hence $Xu=0$. It follows that the map $F_{M^\lambda}$ factors through $O^\lambda$. Then we obtain a homomorphism of $O(N)$-modules \[F_{O^\lambda}\colon O^\lambda\ra \CR_\BC\] such that $F_{O^\lambda}(Y^\lambda T^\lambda)= F_{M^\lambda}(Y^\lambda T^\lambda)= |R^\lambda|[T^\lambda:T^\lambda]$. By composing with the natural quotient map $\CR_\BC\twoheadrightarrow \CR_{\red,\BC}$, we obtain a homomorphism of $O(N)$-modules \[O^\lambda\ra \CR_{\red,\BC}, \] which we still denote by $F_{O^\lambda}$.    To show that $F_{O^\lambda}$ is non-zero, it suffices to prove that $[T^\lambda:T^\lambda]$ is non-zero in $\CR_{\red,\BC}$.  

     Note that the $N\times N$ diagonal matrix \begin{flalign}
     	X_1\coloneqq I_{\cbra{\ol{1},\ol{2},\ldots,\ol{\ell}}}, \label{X145}
     \end{flalign}   with diagonal entries $1$ in the positions indexed by $\cbra{\ol{1},\ldots,\ol{\ell}}$ and $0$ elsewhere,  defines a $\BC$-point in $\CR_\BC(\BC)=\CR_{\red,\BC}(\BC)$. Since $\lambda_1'\leq \ell$, $[T^\lambda:T^\lambda](X_1)=1\neq 0$. Hence, $[T^\lambda:T^\lambda]$ is non-zero in $\CR_{\red,\BC}$. This completes the proof. 
\end{proof}
\begin{remark}\label{rmk-rightF}
    If we endow $\BE$ with the right $GL(N)$-action, then the module $O^\lambda$ (with the same construction) becomes a right $O(N)$-module. We can similarly define a non-zero homomorphism of right $O(N)$-modules from $O^\lambda$ to $\CR_{\red,\BC}$.
\end{remark}

\begin{corollary}\label{coro-basis}
    Assume that $\lambda_1'\leq \ell$. The left (resp. right) $O(N)$-module $L^\lambda_{\CR_\red}$ (resp. ${}^\lambda L_{\CR_\red}$) is irreducible, and has a $\BC$-basis given by $O(N)$-standard bideterminants $[T^\lambda:T]$ (resp. $[T:T^\lambda]$), where $T$ varies over all $O(N)$-standard tableaux. 
\end{corollary}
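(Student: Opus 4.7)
The plan is to realize $L^\lambda_{\CR_\red,\BC}\coloneqq L^\lambda_{\CR_\red}\otimes_R\BC$ as the image of an explicit injection from the irreducible left $O(N)$-module $O^\lambda$ into $\CR_{\red,\BC}$, and then read off the basis and irreducibility directly. First, by Proposition \ref{prop-nonzeromap} there is a non-zero homomorphism of left $O(N)$-modules $F_{O^\lambda}\colon O^\lambda\to\CR_{\red,\BC}$ with $F_{O^\lambda}(Y^\lambda T^\lambda)=|R^\lambda|[T^\lambda:T^\lambda]$; the irreducibility of $O^\lambda$ supplied by Lemma \ref{lem-Onirreducible} forces $F_{O^\lambda}$ to be injective. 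Applying Lemma \ref{lem-gaction}, each left translate $g\cdot[T^\lambda:T^\lambda]$ is a $\BC$-linear combination of bideterminants $[T^\lambda:U]$ with $U$ a column-increasing $\lambda$-tableau, so the image of $F_{O^\lambda}$ sits inside $L^\lambda_{\CR_\red,\BC}$.

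Next I would carry out a dimension count to upgrade this inclusion to an equality. Running the straightening procedure of Proposition \ref{prop-strai} in its transposed form (keeping the left tableau $T^\lambda$ fixed while straightening the right entry, exactly as in the proof of Corollary \ref{coro-span}) shows that $L^\lambda_{\CR_\red,\BC}$ is spanned over $\BC$ by the bideterminants $[T^\lambda:T]$ with $T$ an $O(N)$-standard $\lambda$-tableau, so
\[
\dim_\BC L^\lambda_{\CR_\red,\BC}\leq\#\{O(N)\text{-standard }\lambda\text{-tableaux}\}.
\]
Combined with Lemma \ref{lem-Onirreducible}, which supplies $\dim_\BC O^\lambda=\#\{O(N)\text{-standard }\lambda\text{-tableaux}\}$, the injection $F_{O^\lambda}$ produces the matching lower bound and hence equality.

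The stated $\BC$-basis of $L^\lambda_{\CR_\red,\BC}$ by the $O(N)$-standard bideterminants $[T^\lambda:T]$ is then immediate, and irreducibility of $L^\lambda_{\CR_\red,\BC}$ is inherited from that of $O^\lambda$ via the isomorphism $F_{O^\lambda}\colon O^\lambda\simto L^\lambda_{\CR_\red,\BC}$. The right $O(N)$-module ${}^\lambda L_{\CR_\red}$ is handled by the symmetric argument, using the right-module analogue of $F_{O^\lambda}$ indicated in Remark \ref{rmk-rightF} (obtained by replacing $X$ by $X^t$). The main thing that demands care is the spanning direction: when straightening an arbitrary $[T^\lambda:T]$ via Lemma \ref{lem-os123}(3), residue terms of strictly smaller shape can a priori appear, and reabsorbing these into the $\BC$-span of shape-$\lambda$ bideterminants inside $\CR_\red$ is the principal technical obstacle of the proof.
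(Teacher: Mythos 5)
Your proposal reproduces the paper's proof of Corollary \ref{coro-basis} step for step: the same non-zero $O(N)$-homomorphism $F_{O^\lambda}$ from Proposition \ref{prop-nonzeromap} is declared injective because $O^\lambda$ is irreducible (Lemma \ref{lem-Onirreducible}); the image is placed inside $L^\lambda_{\CR_\red}$; and a dimension count against $\#\{O(N)\text{-standard }\lambda\text{-tableaux}\}$, fed by the spanning half of Proposition \ref{prop-strai}, closes the argument. The right-module statement is handled by Remark \ref{rmk-rightF}, exactly as in the paper. So the route is identical.

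The one substantive remark you make --- that the transposed Proposition \ref{prop-strai} produces a remainder $s$ of strictly smaller shape, so that ``$L^\lambda_{\CR_\red}$ is spanned by the $O(N)$-standard $[T^\lambda:U]$'' does not follow immediately from rewriting each generator $[T^\lambda:T]$ --- is a legitimate one, and it applies equally to the paper's proof, which asserts the spanning claim from Proposition \ref{prop-strai} without addressing the remainder $s$. In your write-up you label this the ``principal technical obstacle'' but do not actually close it; as it stands, the dimension inequality $\dim_\BC L^\lambda_{\CR_\red}\leq\#\{O(N)\text{-standard}\}$ is not proved, only plausible. To make your proof self-contained you would need to supply the missing argument, for example by passing to the quotient $\overline{A(\leq\lambda)}$ (where the $s$-terms die by construction) and then showing that the projection $L^\lambda_{\CR_\red}\to\overline{A(\leq\lambda)}$ is injective (e.g., by semisimplicity of $O(N)$ over $\BC$ together with the non-vanishing of $[T^\lambda:T^\lambda]$ at the point $X_1$ of \eqref{X145}), or by an induction along the partial order $<$ on shapes. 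Without such a clause the proof is incomplete in precisely the place you flagged.
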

\begin{proof}
    Since $O^\lambda$ is irreducible by Lemma \ref{lem-Onirreducible}, the non-zero $O(N)$-homomorphism $F_{O^\lambda}$ in Proposition \ref{prop-nonzeromap} is injective. Since $F_{O^\lambda}(Y^\lambda T^\lambda)=|R^\lambda|[T^\lambda:T^\lambda]\in L^\lambda_{\CR_\red}$, the image of $F_{O^\lambda}$ necessarily lies in $L^\lambda_{\CR_\red}$. By Proposition \ref{prop-strai}, $L^\lambda_{\CR_\red}$ is spanned by $[T^\lambda:T]$, where $T$ varies over $O(N)$-standard tableaux. By Lemma \ref{lem-Onirreducible}, we have \begin{flalign*}
    	\dim_\BC L^\lambda_{\CR_\red}\leq \dim_\BC O^\lambda.
    \end{flalign*} 
    Thus, the injectivity of $F_{O^\lambda}$ implies that $L^\lambda_{\CR_\red}$ is isomorphic to $O^\lambda$, which is irreducible. Furthermore,   the set of $[T^\lambda:T]$ with $T$ being $O(N)$-standard is a $\BC$-basis of $L^\lambda_{\CR_\red}$. 
    
    By Remark \ref{rmk-rightF}, the statement for ${}^\lambda L_{\CR_\red}$ also holds.
\end{proof}

\begin{defn} \label{defn-A}
    Assume that $\lambda_1'\leq \ell$. Let $A(\leq \lambda)$ denote the $\BC$-subspace of $\CR_{\red,\BC}$ spanned by $O(N)$-standard bideterminants of shape $\mu$ with $\mu\leq \lambda$. Define $A(<\lambda)$ similarly. Set \[\ol{A(\leq \lambda)}\coloneqq A(\leq \lambda)/A(<\lambda). \]
    By Lemma \ref{lem-gaction} and Proposition \ref{prop-strai}, $A(\leq\lambda), A(\lambda)$ and $\ol{A(\leq\lambda)}$ are $O(N)$-bimodules. 
\end{defn}

By Corollary \ref{coro-basis}, $L_{\CR_\red}^\lambda\otimes {}^\lambda L_{\CR_\red}$ is an irreducible $O(N)$-bimodule with basis \begin{flalign}
	\cbra{[T^\lambda:T]\otimes [S:T^\lambda]\ |\ \text{$S,T$ are $O(N)$-standard of shape $\lambda$} }.  \label{45}
\end{flalign} 
Then we define a $\BC$-linear map \[\Phi\colon L_{\CR_\red}^\lambda\otimes {}^\lambda L_{\CR_\red}\ra \ol{A(\leq \lambda)}, \text{\ via\ } [T^\lambda:T]\otimes [S:T^\lambda]\mapsto \ol{[S:T]},  \]
for $O(N)$-standard tableaux $S, T$ of shape $\lambda$, and extend $\Phi$ by linearity.

\begin{lem}\label{lem-phiiso}
    The map $\Phi$ is an isomorphism of $O(N)$-bimodules.
\end{lem}
\begin{proof}
    (cf. \cite[Theorem 6.1]{cliff2008basis})
     We first show that $\Phi$ is a homomorphism of $O(N)$-bimodules. Let $g\in O(N)$. By Lemma \ref{lem-gaction}, we have \begin{flalign*}
         [S:T^\lambda]g =\sum_U[S:U](g)[U:T^\lambda],
    \end{flalign*} where $U$ varies over column-increasing $\lambda$-tableaux. By Proposition \ref{prop-strai}, each $[U:T^\lambda]$ can be expressed as \begin{flalign}
         [U:T^\lambda] \equiv \sum_{U'}a_{U,U'}[U':T^\lambda]\ \mod\ A(<\lambda),  \label{auu}
     \end{flalign} 
     where $a_{U,U'}\in \BC$ does not depend on $T^\lambda$, and $U'$ is $O(N)$-standard. Then
     \begin{flalign*}
	     \Phi\rbra{([T^\lambda:T]\otimes [S:T^\lambda])g} &= \Phi\rbra{[T^\lambda:T]\otimes (\sum_U[S:U](g)[U:T^\lambda]) } \\
         &= \Phi\rbra{\sum_{U,U'} a_{U,U'}[S:U](g) [T^\lambda:T]\otimes [U':T^\lambda]   }\\
         &=\sum_{U,U'}a_{U,U'}[S:U](g)\ol{[U':T]}.
	 \end{flalign*}
     On the other hand, by Lemma \ref{lem-gaction},  \begin{flalign*}
         \Phi\rbra{[T^\lambda:T]\otimes [S:T^\lambda]}g &= \ol{[S:T]}g =\sum_U\ol{[S:U](g)[U:T]}.
     \end{flalign*}
     Since $a_{U,U'}$ in \eqref{auu} does not depend on $T^\lambda$, we have \begin{flalign*}
         [U:T] \equiv \sum_{U'}a_{U,U'}[U':T]\ \mod\ A(<\lambda).
     \end{flalign*}
     Thus, \begin{flalign*}
         \Phi\rbra{[T^\lambda:T]\otimes [S:T^\lambda]}g &=\sum_{U}\ol{[S:U](g)\sum_{U'}[U':T]} \\ &=\sum_{U,U'}a_{U,U'}[S:U](g)\ol{[U':T]}.
     \end{flalign*}
     It follows that $\Phi$ is a homomorphism of right $O(N)$-modules. Similarly, we can show that $\Phi$ is also a homomorphism of left $O(N)$-modules. Hence $\Phi$ is a homomorphism of $O(N)$-bimodules.

     Since $L_{\CR_\red}^\lambda$ (resp. ${}^\lambda L_{\CR_\red}$) is an irreducible left (resp. right) $O(N)$-module, the bimodule $L_{\CR_\red}^\lambda\otimes {}^\lambda L_{\CR_\red}$ is also irreducible. As $\Phi$ is clearly non-zero, $\Phi$ is injective. A similar proof of Corollary \ref{coro-span} shows that $\ol{A(\leq \lambda)}$ is spanned by the image of $\Phi$. Therefore, $\Phi$ is an isomorphism of $O(N)$-bimodules. 
\end{proof}

\begin{corollary} \label{coro-Cbasis}
    The set $\CC=\CC(\ell)$ in Corollary \ref{coro-span} (for $R=\BC$) forms a $\BC$-basis of $\CR_{\red,\BC}$.
\end{corollary}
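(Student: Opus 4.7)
My plan for proving Corollary \ref{coro-Cbasis} is the following. Since Corollary \ref{coro-span} already shows that $\CC$ spans $\CR_{\red,\BC}$, only $\BC$-linear independence remains, and I will obtain it by a degree-by-degree representation-theoretic count, combining Lemma \ref{lem-phiiso} with complete reducibility for the reductive group $O(N)$ over $\BC$.

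The ring $\CR_{\red,\BC}$ is $\BZ_{\geq 0}$-graded by the total polynomial degree in the entries of $X$, and a shape-$\lambda$ bideterminant is homogeneous of degree $|\lambda|$, so it suffices to establish independence inside each graded piece $(\CR_{\red,\BC})_d$. Write $P_d$ for the set of partitions $\lambda$ of $d$ with $\lambda_1'\leq m$. I would fix any total order $\wt{<}$ on $P_d$ refining the dominance order $<$ (for instance, dominance broken by lexicographic order) and consider the filtration of $(\CR_{\red,\BC})_d$ by the subspaces $A(\wt{\leq}\lambda)$, the $\BC$-span of shape-$\mu$ bideterminants with $\mu\wt{\leq}\lambda$. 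Any $\mu\wt{\leq}\lambda$ that is not $\leq\lambda$ automatically satisfies $\mu\wt{<}\lambda$, so $A(\wt{\leq}\lambda)=A(\leq\lambda)_d+A(\wt{<}\lambda)$, and the second isomorphism theorem yields a surjection of $O(N)$-bimodules
\[
\ol{A(\leq\lambda)}\twoheadrightarrow A(\wt{\leq}\lambda)/A(\wt{<}\lambda).
\]
By Lemma \ref{lem-phiiso} the source is the irreducible bimodule $L^\lambda\otimes{}^\lambda L$, so each successive quotient in this filtration is either zero or isomorphic to $L^\lambda\otimes{}^\lambda L$.

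Since $O(N)$ is reductive over $\BC$, the finite-dimensional bimodule $(\CR_{\red,\BC})_d$ is semisimple, and the irreducibles $L^\mu\otimes{}^\mu L$ for distinct $\mu\in P_d$ are pairwise non-isomorphic (distinct highest weights, via Lemma \ref{lem-Onirreducible} and Corollary \ref{coro-basis}). The filtration therefore forces a decomposition
\[
(\CR_{\red,\BC})_d\;\cong\;\bigoplus_{\mu\in P_d'}L^\mu\otimes{}^\mu L
\]
for some subset $P_d'\subseteq P_d$, each summand appearing with multiplicity one. To conclude $P_d'=P_d$, I plan to exhibit an embedding of $L^\mu\otimes{}^\mu L$ into $(\CR_{\red,\BC})_d$ for every $\mu\in P_d$: by Proposition \ref{prop-nonzeromap} (together with Remark \ref{rmk-rightF} for the right action), the sub-bimodule of $A(\leq\mu)$ generated by $[T^\mu:T^\mu]$ has non-zero image in $\ol{A(\leq\mu)}$, hence by irreducibility surjects onto $L^\mu\otimes{}^\mu L$, and semisimplicity splits off an isomorphic copy inside $A(\leq\mu)\subseteq(\CR_{\red,\BC})_d$. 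A dimension count via Corollary \ref{coro-basis} then gives $\dim_\BC(\CR_{\red,\BC})_d=\sum_{\mu\in P_d}(\dim L^\mu)(\dim{}^\mu L)$, which matches the cardinality of $\CC\cap(\CR_{\red,\BC})_d$; combined with spanning this forces $\CC\cap(\CR_{\red,\BC})_d$ to be a $\BC$-basis.

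The main obstacle is verifying $P_d'=P_d$, i.e.\ that no irreducible factor $L^\mu\otimes{}^\mu L$ is ``crushed'' by the filtration, and this step is where the explicit non-vanishing of the bideterminant $[T^\mu:T^\mu]$ at the diagonal specialization $X_1$ constructed in the proof of Proposition \ref{prop-nonzeromap} becomes essential: it certifies that the shape-$\mu$ copy is not absorbed into strictly dominance-smaller shapes. Everything else in the argument is formal, coming from the second isomorphism theorem, irreducibility via Lemma \ref{lem-phiiso}, and complete reducibility for $O(N)$.
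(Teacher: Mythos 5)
Your argument is correct, and it lands in the same place as the paper's but via a somewhat different and more heavily representation-theoretic route. The paper's proof is a direct induction on a total order $\wt{<}$ refining $<$: assuming $A(\wt{<}\lambda)$ already has the desired basis, it invokes \eqref{45} and Lemma \ref{lem-phiiso} to conclude the same for $A(\wt{\leq}\lambda)$, then passes to the limit over $\lambda$. You instead grade by polynomial degree, filter $(\CR_{\red,\BC})_d$ by $A(\wt{\leq}\lambda)$, use the second isomorphism theorem together with Lemma \ref{lem-phiiso} to identify each successive quotient as zero or $L^\lambda\otimes{}^\lambda L$, and then bring in semisimplicity of finite-dimensional $O(N)$-bimodules over $\BC$, pairwise non-isomorphism of the $O^\mu$, and a dimension count against Corollary \ref{coro-basis}. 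The extra machinery you deploy is precisely where the paper is terse: the step from the partial-order isomorphism $\ol{A(\leq\lambda)}\simeq L^\lambda\otimes{}^\lambda L$ of Lemma \ref{lem-phiiso} to the corresponding total-order statement (equivalently, that the surjection $\ol{A(\leq\lambda)}\twoheadrightarrow A(\wt{\leq}\lambda)/A(\wt{<}\lambda)$ you construct is not the zero map) is asserted rather than argued in the paper, and your appeal to complete reducibility and multiplicity-one fills this in cleanly by showing every $L^\mu\otimes{}^\mu L$ genuinely appears. One small caveat: in your closing remarks you attribute the ``non-absorption'' of the shape-$\mu$ copy directly to the non-vanishing of $[T^\mu:T^\mu]$ at the specialization $X_1$ from Proposition \ref{prop-nonzeromap}; by itself that evaluation only shows $[T^\mu:T^\mu]\neq 0$ in $\CR_{\red,\BC}$, not that it avoids $A(<\mu)$ — what you really need (and do use) is the full strength of Lemma \ref{lem-phiiso}, which says $\ol{[T^\mu:T^\mu]}\neq 0$ in the quotient $\ol{A(\leq\mu)}$. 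With that adjustment of emphasis, your proof is a complete and somewhat more self-contained variant, at the cost of invoking semisimplicity, which the paper's induction avoids stating explicitly.
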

\begin{proof}
    Let $\wt{\leq}$ be a total order refining the partial order $<$ in Definition \ref{defn-tableaux} (6). By induction, $A(\wt{<}\lambda)$ has a basis given by $O(N)$-standard bideterminants in $\CC$ of shape $\wt{<}\lambda$. By \eqref{45} and Lemma \ref{lem-phiiso}, we obtain that $A(\wt{\leq}\lambda)$ has a $\BC$-basis consisting of $O(N)$-standard bideterminants in $\CC$ of shape $\wt{\leq}\lambda$.
    Note that any finite subset of $\CC$ lies in $A(\wt{\leq}\lambda)$ for a sufficiently large $\lambda$. It follows that elements in $\CC$ are linearly independent. Now the lemma follows from Corollary \ref{coro-span}. 
\end{proof}

\begin{thm}\label{thm-basis}
    Let $R$ be a $\BZ[1/2]$-algebra.
    \begin{enumerate}
        \item The set $\CC=\CC(\ell)$ in Corollary \ref{coro-span} is an $R$-basis of $\CR_R=\CR(\ell)_R$. In particular, $\CR_R$ is flat over $R$.
        \item The ring $\CR_{\BZ[1/2]}$ is reduced.
    \end{enumerate}
\end{thm}
\begin{proof}
    We first deal with the case $R=\BZ[1/2]$. Since the reduction map $\CR_{\BZ[1/2]}\ra \CR_{\BZ[1/2],\red}$ is surjective, Corollary \ref{coro-span} implies that there is a short exact sequence of $R$-modules \begin{flalign*}
        0\ra \ker\ra \bigoplus_{[S:T]\in\CC}\BZ[1/2]e_{[S:T]}\twoheadrightarrow \CR_{\BZ[1/2],\red}\ra 0.
    \end{flalign*}
    Since $\BZ[1/2]$ is a PID, the module $\ker$ is a free $\BZ[1/2]$-module. By Corollary \ref{coro-Cbasis}, we have $\ker\otimes_{\BZ[1/2]}\BC=0$. Hence, $\ker=0$ and the surjection \[\bigoplus_{[S:T]\in\CC}\BZ[1/2]e_{[S:T]}\twoheadrightarrow \CR_{\BZ[1/2]}\twoheadrightarrow \CR_{\BZ[1/2],\red} \] is an isomorphism. It follows that $\CR_{\BZ[1/2]}=\CR_{\BZ[1/2],\red}$ is reduced, which proves (2). 

    Let $R$ now be a general $\BZ[1/2]$-algebra. Note that \[\CR_R\simeq \CR_{\BZ[1/2]}\otimes_{\BZ[1/2]}R. \]
    As the set $\cbra{[S:T]\in\CC}$ forms a $\BZ[1/2]$-basis of $\CR_{\BZ[1/2]}$ by the preceding discussion, its base change to $R$ is an $R$-basis of $\CR_R$.    
\end{proof}

\begin{remark}
    The idea of proving that a ring is reduced using an argument as in the proof of Theorem \ref{thm-basis} (2) seems to be unnoticed in the literature. This approach is useful; for example, it circumvents the step to show directly that $\CR_{\BZ[1/2]}$ satisfies Serre's conditions (R0) and (S1), unlike the argument in the proof of \cite[Proposition 5.1]{gortz2003flatness}.
\end{remark}

\subsection{Passing to $\CR^\pm$ and $\CR^+_j$}
We continue to use the notation from the previous subsections. We write $\CR$ for ${}_J\CR_{\BZ[1/2]}$. Assume that $N=2m$ is even.
Recall that (see Definition \ref{defn-RNJ}) we have rings 
\begin{flalign}
    \CR^+ = \CR/\CI^-,\ \CR_j^+= \CR^+/\fp_j,\ j=1,2.
\end{flalign}
By Theorem \ref{thm-basis}, the ring $\CR$ is reduced and flat over $\BZ[1/2]$. We will show that $\CR^+$ and $\CR_j^+$ also have these properties; see Theorem \ref{thm-R+basis} and \ref{thm-R12flat}.

In the following, we will freely view a subset of $\sI$ (see \eqref{eq-sI}) as a tableau with just one column.


\begin{defn}
    Let $S, T$ be two $O(N)$-standard tableaux of shape $\lambda$. \begin{enumerate}
        \item Define a tableau $T^\perp$ as follows: 
           \begin{enumerate}
             \item if $\lambda_1'<m$, then set $T^\perp\coloneqq T$;
              \item if $\lambda_1'=m$, then set $T^\perp$ to be the tableau whose first column is $(T^1)^\perp$ and other columns are the same as $T$. Recall that $T^1$ denotes the first column of $T$.
           \end{enumerate}
       \item Suppose $\lambda_1'=m$. Set $\sgn(\tau_T)\coloneqq \sgn(\tau_{T^1})$.  
    \end{enumerate} 
\end{defn}

\begin{lem}
    If $T$ is $O(N)$-standard of shape $\lambda$, then so is $T^\perp$.
\end{lem}
\begin{proof}
    See \cite[Lemma 4.3]{king1992construction}.
\end{proof}


\begin{lem}\label{lem-sonirr}
    Let $\lambda$ be a partition with $\lambda'_1\leq m$.
    \begin{enumerate}
        \item If $\lambda_1'<m$, then $O^\lambda$ is an irreducible $SO(N)$-module.
        \item If $\lambda_1'=m$, then $O^\lambda$, as an $SO(N)$-module, decomposes into a direct sum of two inequivalent irreducible $SO(N)$-modules, the dimension of each being $\half\dim O^\lambda$. 
    \end{enumerate}
\end{lem}
\begin{proof}
    (1) By the discussion before Definition 4.2 in \cite[\S 4]{king1992construction}, the $O(N)$-module $O^\lambda$ is not self-associate, and hence it is an irreducible $SO(N)$-module. 

    (2) See \cite[Theorem 4.9]{king1992construction}. 
\end{proof}

To have a uniform notation, we set \begin{equation*}
    O^\lambda=O^{\lambda+}=O^{\lambda-}, \text{\ if $\lambda_1'<m$.}
\end{equation*}
For any shape $\lambda$, the base change $\CR_\BC=\CR\otimes_{\BZ[1/2]}\BC$ has an irreducible left $O(N)$-subspace $L^\lambda_\CR$ generated by the bideterminants $[T^\lambda:T]$ for all tableaux $T$ of shape $\lambda$. If $\lambda_1'=m$, we denote by $L_\CR^{\lambda\pm}$ the subspace of $L_\CR^\lambda\sset \CR_\BC$ generated by \[ f_T^\pm\coloneqq [T^\lambda:T]\pm  \sgn(\tau_{T^\lambda})\sgn(\tau_T)[T^\lambda:T^\perp]. \]
If $\lambda_1'<m$, then we set $L_\CR^{\lambda\pm}\coloneqq L_\CR^\lambda$.
\begin{corollary}\label{coro-irrLR}
	Let $\lambda$ be a partition with $\lambda_1'\leq m$. Then $L_\CR^{\lambda\pm}$ is an irreducible $SO(N)$-module.
\end{corollary}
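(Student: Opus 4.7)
The plan is to treat the two cases $\lambda_1'<m$ and $\lambda_1'=m$ separately. If $\lambda_1'<m$, then $L_\CR^{\lambda\pm}=L_\CR^\lambda$ by definition; by Theorem~\ref{thm-basis}(2), $\CR$ is reduced, so $L_\CR^\lambda=L^\lambda_{\CR_\red}$, and the $O(N)$-equivariant map $F_{O^\lambda}$ of Proposition~\ref{prop-nonzeromap} combined with Corollary~\ref{coro-basis} yields an $O(N)$-isomorphism $O^\lambda\simeq L^\lambda_{\CR_\red}$. Lemma~\ref{lem-sonirr}(1) then says the source is already irreducible as an $SO(N)$-module, disposing of this case.

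The substantive case is $\lambda_1'=m$. Here I would proceed in three steps. \emph{Step 1 (decomposition).} I would introduce a $\BC$-linear involution $\iota\colon L_\CR^\lambda\to L_\CR^\lambda$ defined on the basis $\cbra{[T^\lambda:T]\ |\ T\ O(N)\text{-standard}}$ from Corollary~\ref{coro-basis} by $[T^\lambda:T]\mapsto \sgn(\tau_{T^\lambda})\sgn(\tau_T)[T^\lambda:T^\perp]$; here $T^\perp$ is again $O(N)$-standard by \cite[Lemma~4.3]{king1992construction}. A direct check (using the parity identity $\sgn(\tau_{T^\perp})=\sgn(\tau_T)$) shows $\iota^2=\mathrm{id}$, and the $\pm 1$-eigenspaces of $\iota$ are exactly $L_\CR^{\lambda\pm}$, giving $L_\CR^\lambda=L_\CR^{\lambda+}\oplus L_\CR^{\lambda-}$.

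\emph{Step 2 (invariance).} Expanding $g\cdot f_T^\pm=\sum_U a_U[T^\lambda:U]$ via Lemma~\ref{lem-gaction}, I would combine the $SO(N)$-identity $[S:T](g)=\sgn(\tau_S)\sgn(\tau_T)[S^\perp:T^\perp](g)$ from the proof of Lemma~\ref{lem-SOnaction} with $\sgn(\tau_{U^\perp})=\sgn(\tau_U)$ to derive the key relation $a_{U^\perp}=\pm\sgn(\tau_{T^\lambda})\sgn(\tau_U)\,a_U$. Pairing the terms indexed by $\cbra{U,U^\perp}$ then realizes $g\cdot f_T^\pm$ explicitly as a $\BC$-linear combination of the $f_U^\pm$, proving $SO(N)$-stability. \emph{Step 3 (conclusion).} Both $L_\CR^{\lambda\pm}$ are non-zero: $f_{T^\lambda}^+=2[T^\lambda:T^\lambda]\neq 0$ (note $T^\lambda=(T^\lambda)^\perp$), while any $O(N)$-standard $T$ whose first column contains a bar-pair $\cbra{\bari,i}$ (readily constructed when $m\geq 1$) gives $T\neq T^\perp$, so that $[T^\lambda:T]$ and $[T^\lambda:T^\perp]$ are distinct basis elements and $f_T^-\neq 0$. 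By Lemma~\ref{lem-sonirr}(2), $L_\CR^\lambda\simeq O^\lambda$ has exactly two inequivalent irreducible $SO(N)$-constituents of half-dimension; hence any non-zero proper $SO(N)$-submodule equals one of them, and since $L_\CR^{\lambda+}\oplus L_\CR^{\lambda-}=L_\CR^\lambda$ with both summands non-zero and proper, each must coincide with one of the two irreducibles.

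The main technical obstacle is the parity identity $\sgn(\tau_{U^\perp})=\sgn(\tau_U)$. Using the analogue of \eqref{eq-SigmaS} for $\sI$, it reduces to $\Sigma U+\Sigma U^\perp\equiv 0\pmod{2}$; identifying $\sI$ with $[1,2m]$ via the order and letting $a$ and $b$ denote the numbers of odd and even positions in $U$ (so $a+b=m$), a direct count yields $\Sigma U+\Sigma U^\perp\equiv m(2m{+}1)+(b-a)\equiv 2m\equiv 0\pmod{2}$. Once this identity is established, the rest of the argument is a mechanical unwinding of the definitions.
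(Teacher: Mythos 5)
Your proof takes essentially the same route as the paper: $\lambda_1'<m$ is handled by Lemma~\ref{lem-sonirr}(1), and for $\lambda_1'=m$ one establishes (a) the decomposition $L_\CR^\lambda=L_\CR^{\lambda+}\oplus L_\CR^{\lambda-}$, (b) $SO(N)$-stability of each summand via the argument of Lemma~\ref{lem-SOnaction}, and (c) concludes from Lemma~\ref{lem-sonirr}(2). The paper compresses all of (a)--(c) into two sentences; your write-up usefully spells out what is being invoked. Your Steps~1 and~2 are correct, including the parity identity $\sgn(\tau_{U^\perp})=\sgn(\tau_U)$, which you verify carefully and which is indeed the hinge on which everything turns (it is needed both for $\iota^2=\mathrm{id}$ and for the relation $a_{U^\perp}=\pm\sgn(\tau_{T^\lambda})\sgn(\tau_U)a_U$).

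The only soft spot is the non-vanishing of $L_\CR^{\lambda-}$ in Step~3. First, your parenthetical ``readily constructed when $m\geq 1$'' is false for $m=1$: when $N=2$ every one-element subset of $\sI$ is its own $\perp$, so no $T$ has a bar-pair in its (length-$1$) first column. This is harmless here since the paper takes $N\geq 3$, hence $N\geq 4$ and $m\geq 2$ in the even case, but you should say $m\geq 2$. Second, for $m\geq 2$ the existence of an \emph{$O(N)$-standard} tableau of the given shape $\lambda$ whose first column contains a bar-pair is not immediate: one must check the row-nondecreasing condition and (OS1)--(OS3) against the remaining columns of $T$, which becomes delicate precisely when $\lambda_2'=m$. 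The claim is in fact true, but it needs a short combinatorial argument rather than being dismissed as ``readily constructed.'' A cleaner route avoiding this altogether: pick any $g\in O(N)$ with $\det g=-1$; the computation in the proof of Lemma~\ref{lem-SOnaction} (and your Step~2, with the extra sign) shows $g\cdot L_\CR^{\lambda+}\subseteq L_\CR^{\lambda-}$, and since $g$ is invertible and $L_\CR^{\lambda+}\ni 2[T^\lambda:T^\lambda]\neq 0$, this forces $L_\CR^{\lambda-}\neq 0$ and indeed $\dim L_\CR^{\lambda+}=\dim L_\CR^{\lambda-}$. This parallels Remark~\ref{rmk+-} and sidesteps the tableau combinatorics entirely.
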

\begin{proof}
	If $\lambda_1'<m$, then $L_\CR^{\lambda\pm}=L_\CR^\lambda\simeq O^\lambda$ is $SO(N)$-irreducible by Lemma \ref{lem-sonirr} (1). Suppose $\lambda_1'=m$. Note that a similar argument in Lemma \ref{lem-SOnaction} implies that $L_\CR^{\lambda\pm}$ is an $SO(N)$-module. Thus, we obtain \begin{flalign*}
		L_\CR^\lambda = L_\CR^{\lambda+}\oplus L_\CR^{\lambda-}
	\end{flalign*}
	as a decomposition of $SO(N)$-modules. By Lemma \ref{lem-sonirr} (2), the space $L_\CR^{\lambda\pm}$ is an irreducible $SO(N)$-module. 
\end{proof}

\begin{defn}
    Let $\lambda$ be a partition with $\lambda_1'=m$. Let $T$ be an $O(N)$-standard tableau of shape $\lambda$. We write $T\preccurlyeq_\pm T^\perp$ if \begin{itemize}
        \item $T\neq T^{\perp}$ and $T^1\leq (T^{1})^\perp$ in lexicographic order, or
        \item $T=T^\perp$ and $\sgn(\tau_T)=\pm \sgn(\tau_{T^\lambda})$.
    \end{itemize}
\end{defn}

\begin{lem}\label{lem-basis+}
    Assume $\lambda_1'=m$.
    The space $L_\CR^{\lambda\pm}$ has a $\BC$-basis given by $$\cbra{f_T^\pm\ |\ \text{ $T$ is $O(N)$-standard of shape $\lambda$ and } T\preccurlyeq_\pm T^\perp}.$$
\end{lem}
\begin{proof}
    Note that the condition $T\preccurlyeq_\pm T^\perp$ picks exactly one element from the pair $\cbra{T,T^\perp}$ when $T\neq T^\perp$, and picks a nonzero $f_T^\pm$ when $T=T^\lambda$. Then the lemma easily follows from the fact that $L^\lambda_{\CR}$ has a $\BC$-basis given by $O(N)$-standard bideterminants $[T^\lambda:T]$ by Corollary \ref{coro-basis}.
\end{proof}
Note that $T^\lambda=T^{\lambda\perp}$ by Definition \ref{defn-Tlambda}. Hence, along the natural quotient map $$\CR_\BC\twoheadrightarrow \CR^+_\BC\twoheadrightarrow \CR^+_{\red,\BC},$$ the space $L_\CR^-$ maps to zero, and the element $f_T^+$ maps to $2[T^\lambda:T]$. 

\begin{defn}
    Let $\lambda$ be a partition with $\lambda_1'\leq m$. Denote \begin{flalign*}
      \text{$L^{\lambda+}_{\CR^+_\red}\coloneqq $ the image of $L_\CR^{\lambda+}$ in $\CR^+_{\red,\BC}$.	}
    \end{flalign*} 
    Then $L^{\lambda+}_{\CR^+_\red}$ is a left $SO(N)$-submodule generated by $[T^\lambda:T](X)$ in $\CR^+_{\red,\BC}$.
\end{defn}

\begin{defn}
    Let $\lambda$ be a partition with $\lambda_1'\leq m$. Let $T$ be an $O(N)$-standard tableau of shape $\lambda$. We say $T$ is \dfn{$SO(N)$-standard} if $\lambda_1'<m$ or $T\preccurlyeq_+ T^\perp$.  

    A bitableau (or bideterminant) $[S:T]$ is $SO(N)$-standard if both $S$ and $T$ are $SO(N)$-standard. 
\end{defn}

\begin{lem}\label{lem-leftirr}
    The space $L_{\CR^+_\red}^{\lambda+}$ is an irreducible left $SO(N)$-module, and has a $\BC$-basis given by $SO(N)$-standard bideterminants $[T^\lambda:T]$ in $\CR_{\red,\BC}^+$. 
\end{lem}
\begin{proof}
    By construction, we have a map \begin{flalign*}
    	f\colon L_\CR^{\lambda+}\ra  L_{\CR_\red^+}^{\lambda+}\ra \CR_{\red,\BC}^+,
    \end{flalign*}
    whose image contains the element $[T^\lambda:T^\lambda](X)$ in $\CR_{\red,\BC}^+$. Using the matrix \eqref{X145} in the proof of Proposition \ref{prop-nonzeromap}, we conclude that $[T^\lambda:T^\lambda](X)$ is nonzero in $\CR_{\red,\BC}^+$. In particular, $f$ is a nonzero map. As $L^{\lambda+}_{\CR}$ is irreducible by Corollary \ref{coro-irrLR}, $f$ is injective, and induces an isomorphism $L_{\CR^+_\red}^{\lambda+}\simeq L_{\CR}^{\lambda+}$; and hence $L_{\CR^+_\red}^{\lambda+}$ is an irreducible $SO(N)$-module.
The description of the $\BC$-basis follows from Lemma \ref{lem-basis+}.
\end{proof}

Similarly, we define ${}^\lambda L^{+}_{\CR^+_\red}$ to be the right $SO(N)$-module generated by $[T:T^\lambda]$ in $\CR_{\red,\BC}^+$, and we have the following lemma.
\begin{lem}\label{L+red}
    The space ${}^\lambda L_{\CR^+_\red}^{+}$ is an irreducible right $SO(N)$-module, and has a $\BC$-basis given by $SO(N)$-standard bideterminants $[T:T^\lambda]$ in $\CR_{\red,\BC}^+$. 
\end{lem}

Similarly as in Definition \ref{defn-A}, we define subspaces $A^+(\leq \lambda), A^+(<\lambda)$ and $\ol{A^+(\leq\lambda)}$  of $\CR^+_{\red,\BC}$. These spaces are $SO(N)$-bimodules.
Under the quotient map $\CR_\BC\twoheadrightarrow \CR^+_{\red,\BC}$, the isomorphism $\Phi$ in Lemma \ref{lem-phiiso} becomes \begin{flalign*}
    \Phi^+\colon L^{\lambda+}_{\CR_{\red}^+}\otimes {}^\lambda L^+_{\CR_{\red}^+}\ra \ol{A^+(\leq\lambda)},\quad [T^\lambda:T]\otimes [S:T^\lambda]\mapsto \ol{[S:T]},
\end{flalign*}
which is an isomorphism of $SO(N)$-bimodules.

\begin{thm}\label{thm-R+basis}
    The ring $\CR^\pm$ is reduced, and flat over $\BZ[1/2]$.
\end{thm}
\begin{proof}
	By Lemma \ref{L+red} and similar arguments as in the case of $\CR$ (cf. Corollary \ref{coro-basis} and Theorem \ref{thm-basis}), we obtain the theorem for $\CR^+$. Since $\CR^-$ is isomorphic to $\CR^+$ (see Remark \ref{rmk+-}), we complete the proof.
\end{proof}

Let $\epsilon= \sgn(\tau_{T^\lambda})=\cbra{\pm}$. For $j=1,2$, along the quotient map \begin{flalign*}
	\CR_\BC\twoheadrightarrow \CR^+_{\red,\BC}\twoheadrightarrow (\CR^+_{j})_{\red,\BC},
\end{flalign*}
the image of $f_T^\epsilon$ in $(\CR_1^+)_{\red,\BC}$ is $2[T^\lambda:T]$, while the image of $f_T^{-\epsilon}$ is zero. The analogous statement for $\CR^+_2$ is obtained by reversing the sign. By similar arguments in Lemma \ref{lem-leftirr}, \ref{L+red} and Theorem \ref{thm-R+basis}, we obtain the following.

\begin{thm}\label{thm-R12flat}
	The rings $\CR^+_1 $ and $\CR^+_2$ are reduced, and flat over $\BZ[1/2]$.
\end{thm}

\subsection{Geometric properties} \label{subsec47}
Let $E$ be a free $\BZ[1/2]$-module of rank $N$. Suppose that with respect to the standard basis $\cbra{\alpha_1,\ldots,\alpha_N}$, $E$ is equipped with the (perfect) symmetric pairing $\varphi$ corresponding to the matrix $J$ in \eqref{Jmatrix}. Denote by $\ud{\End}(E)$ the scheme of endomorphisms of $E$. Then $\ud{\End}(E)$ is isomorphic to $\Spec \BZ[1/2][X]$. In this way, we may view  \begin{flalign*}
	Z_\ell\coloneqq \Spec{}_J\CR(\ell),\ Z^+\coloneqq \Spec{}_J\CR^+,\ Z_j^+\coloneqq \Spec{}_J\CR^+_j,\ \text{for $0\leq\ell\leq m$ and $j=1,2,$}
\end{flalign*}
  as closed subschemes of $\ud{\End}(E)$; see Definition \ref{defn-RNJ}. By Theorem \ref{thm-basis}, \ref{thm-R+basis} and \ref{thm-R12flat}, all these subschemes are reduced, and flat over $\BZ[1/2]$. Fixing the basis $\cbra{\alpha_1,\ldots,\alpha_N}$, we will view an element in $\ud{\End}(E)$ as an $N\times N$ matrix, and vice versa.

Recall that for $0\leq\ell\leq m$, the orthogonal Grassmannian $\OGr(\ell,E)$ consisting of totally isotropic rank $\ell$ subspaces (with respect to $\varphi$) of $E$ is a smooth scheme over $\BZ[1/2]$; cf. \cite[\S 13.3, 13.8]{jantzen2003representations}. Furthermore, the scheme $\OGr(\ell,E)$  is $\BZ[1/2]$-smooth of (relative) dimension $$\ell(N-\ell)-\ell(\ell+1)/2;$$ and it has one (resp. two) connected component(s) if $N>2\ell$ (resp. $N=2\ell$); cf. \cite[Proposition 2.4]{mandelshtam2025positive}. 

\begin{defn}
Recall that $N=2m$. Denote by $\OGr(m,E)_+$ (resp. $\OGr(m,E)_-$) the connected component containing the point corresponding to the maximal totally isotropic rank $m$ subspace $\BZ[1/2]\pair{\alpha_1,\ldots,\alpha_m}$ (resp. $\BZ[1/2]\pair{\alpha_1,\ldots,\alpha_{m-1},\alpha_{m+1}}$).

    Let $Q^+$ be the moduli functor \[\Sch_{/\BZ[1/2]}^\op\ra \Sets\]
    sending a $\BZ[1/2]$-scheme $S$ to the set of triples $(X,L_1,L_2)$, where $X\in \ud{\End}(E)(S)$, and  $L_1, L_2\in \OGr_\pm(m,E)(S)$, such that \begin{itemize}
        \item if $L_1\in \OGr(m,E)_+(S)$, then $L_2\in \OGr(m,E)_{(-1)^m}(S)$;
        \item if $L_1\in\OGr(m,E)_-(S)$, then $L_2\in\OGr(m,E)_{(-1)^{m+1}}(S)$;
        \item $\Im(X)$ (resp. $\Im(JX^tJ)$) is contained in $L_1$ (resp. $L_2$).
    \end{itemize} 
\end{defn}
Denote \[\RO_+\coloneqq \OGr(m,E)_+\times \OGr(m,E)_{(-1)^m} \text{\ and\ } \RO_-\coloneqq \OGr(m,E)_-\times \OGr(m,E)_{(-1)^{m+1}}.\] Then $\RO_\pm$ is irreducible and smooth over $\BZ[1/2]$.
By definition of $Q^+$, we have an obvious diagram \begin{flalign}
    \RO_+\sqcup\RO_- \xleftarrow{\ p_1\ } Q^+\xrightarrow{\ p_2\ } \ud{\End}(E),
      \label{diagQ-1}
\end{flalign}  
where $p_1((X,L_1,L_2))\coloneqq (L_1,L_2)$ and $p_2((X,L_1,L_2))\coloneqq X$.

\begin{lem}\label{lemQ}
    The morphism $p_1: Q^+\ra \RO_+\sqcup\RO_-$ is a vector bundle of rank $m^2$. In particular, $Q^+$ has two connected components and is smooth over $\BZ[1/2]$ of (relative) dimension $m(2m-1)$.
\end{lem}
\begin{proof}
    Note that we have a canonical isomorphism \begin{flalign*}
        \Phi\colon E\simto E^\vee, \quad x\mapsto \varphi(-,x),
    \end{flalign*} where $E^\vee$ denotes the $\BZ[1/2]$-linear dual of $E$.    
    For any $\BZ[1/2]$-algebra $A$, we write $Q^+(A)$ for $Q^+(\Spec A)$. Let $(X,L_1,L_2)\in Q^+(A)$. The composite map  \begin{flalign*}
        E\otimes_{\BZ[1/2]}A\xrightarrow{\Phi} E^\vee\otimes_{\BZ[1/2]}A\xrightarrow{X^\vee}E^\vee\otimes_{\BZ[1/2]}A\xrightarrow{\Phi\inverse} E\otimes_{\BZ[1/2]}A
    \end{flalign*}
    is given by the matrix $X^\ad\coloneqq J\inverse X^tJ=JX^tJ$. The condition that $\Im(X)\sset L_1$ amounts to that $$X\colon E\otimes_{\BZ[1/2]}A\ra E\otimes_{\BZ[1/2]}A$$ factors through $\iota_1\colon L_1\hookrightarrow E\otimes_{\BZ[1/2]}A$. Note that the inclusion $L_2\hookrightarrow E\otimes_{\BZ[1/2]}A$ induces a surjection \[q_2\colon E\otimes_{\BZ[1/2]}A\xrightarrow{\Phi}E^\vee\otimes_{\BZ[1/2]}A\twoheadrightarrow L_2^\vee, \]
    whose kernel is the orthogonal complement $L_2^\perp\sset E\otimes_{\BZ[1/2]}A$. For any $m\in L_2^\perp$, we have \begin{flalign*}
        \varphi(E,X(m)) = \varphi(X^\ad (E),m).
    \end{flalign*}
    Since $X^\ad(E)\sset L_2$, we have \begin{flalign*}
        \varphi(E,X(m))=\varphi(E,X(m)) \sset \varphi(L_2,m) =\cbra{0}
    \end{flalign*}
    Since $\varphi$ is perfect, $X(m)=0$. It follows that the map \[E\otimes_{\BZ[1/2]}A\xrightarrow{X}L_1\sset E\otimes_{\BZ[1/2]}A \]
    factors through $q_2$. We obtain a map $L_2^\vee\ra L_1$. Conversely, given a map $f\colon L_2^\vee\ra L_1$, we can construct a map
    \[\wt{f}\colon E\otimes_{\BZ[1/2]}A\xrightarrow{q_2}L_2^\vee \xrightarrow{f}L_1\xrightarrow{\iota_1}E\otimes_{\BZ[1/2]}A. \]
    Clearly, $\wt{f}$ satisfies that $\Im(\wt{f})\sset L_1$ and $\Im(J{\wt{f}}^tJ)\sset L_2$.

    Denote by $\CL_\pm$ the universal maximal totally isotropic subspace over $\OGr(m,E)_\pm$. Then $\CL_\pm$ is a vector bundle of rank $m$ over $\OGr(m,E)_\pm$. Then $\CL_+^{1}\times \CL_{(-1)^m}^2$ is a pair of universal maximal totally isotropic subspace over $\RO_+$. Here, $\CL_+^1$ (resp. $\CL_{(-1)^m}^2$) denotes the pullback of $\CL_+$ (resp. $\CL_{(-1)^m}$) to $\RO_+$. A similar statement holds for $\RO_-$. Then by the previous discussion, we deduce that \begin{flalign}
        Q^+ = \CHom(\CL_{(-1)^m}^{2\vee},\CL_+^1)\sqcup \CHom(\CL_{(-1)^{m+1}}^{2\vee},\CL_-^1),
    \end{flalign}
    where $\CHom$ denotes the scheme of homomorphisms of vector bundles. Since both vector bundles $\CHom(\CL_{(-1)^m}^{2\vee},\CL_+^1)$ and $\CHom(\CL_{(-1)^{m+1}}^{2\vee},\CL_-^1)$ have rank $m^2$, we obtain the lemma. 
\end{proof}

\begin{lem}\label{lem-facZ}
    The morphism $p_2$ in \eqref{diagQ-1} factors through $Z^+\sset \ud{\End}(E)$.
\end{lem}
\begin{proof}
   Since $Q^+$ is reduced by Lemma \ref{lemQ}, we may check for the geometric points. 
   Let $(X,L_1,L_2)$ be a geometric point in $Q^+(\kappa)$ for an algebraically closed field $\kappa$.
   the condition that $\Im(X)$ lies in $L_1$ implies that the column vectors in $X$ are isotropic. Then we have $X^tJX=0$. Similarly, the condition $\Im(JX^tJ)\sset L_2$ implies that $XJX^t=0$.

   Denote $\ol{E}\coloneqq  E\otimes_{\BZ[1/2]}\kappa$. The map $X$ induces a map \[\wedge^mX\colon \wedge^m(\ol{E})\ra \wedge^m(\ol{E}),  \]
   which factors through $\wedge^mL_1$. Recall that (cf. \cite[\S 7.1.3]{pappas2009local}) we have a decomposition \begin{flalign*}
   	\wedge^m(\ol{E}) = \wedge^m(\ol{E})_+\oplus \wedge^m(\ol{E})_-,
   \end{flalign*} 
   where $\wedge^m(\ol{E})_\pm$ is generated by \[\cbra{\beta_S\coloneqq \alpha_S\pm\sgn(\tau_S)\alpha_{S^\perp} \ |\ S\in\sB}. \]
   Here, we reindex the ordered set $\cbra{1,2,\ldots,n}$ by $\sI$ as in \eqref{eq-sI}.
   
   Suppose $L_1\in\OGr(m,E)_+(\kappa)$.  We obtain that $\Im(\wedge^mX)$ lies in ${\wedge^m(\ol{E})}_+$; cf. \cite[\S 7.1.4]{pappas2009local}. 
   Thus, for any $S\in\sB$, \[(\wedge^mX)(\alpha_S)= \sum_{T\in\sB}[S:T](X)\alpha_T \in \wedge^m(\ol{E})_+\sset \wedge^m(\ol{E}). \]
   Using that $\cbra{\alpha_S}_{S\in\sB}$ forms a $\kappa$-basis of $\wedge^m(\ol{E})$,  we obtain \begin{flalign}
       [S:T](X) = \sgn(\tau_T)[S:T^\perp](X).  \label{214}
   \end{flalign}
   Similarly, the condition that $\Im(JX^tJ)\sset L_2\sset \OGr(m,E)_{(-1)^m}$ translates to \begin{flalign}
       [S:T](JX^tJ)=(-1)^m\sgn(\tau_T)[S:T^\perp](JX^tJ) \label{213}
   \end{flalign} 
   for any $S,T\in\sB$.  Note that \begin{flalign*}
       [S:T](JX^tJ) = [\ol{S}:\ol{T}](X^t)=[\ol{T}:\ol{S}](X).
   \end{flalign*}
   By construction, we have 
    \[\sgn(\tau_T)=(-1)^{m}\sgn(\tau_{\ol{T}}). \]
   Then we obtain that \eqref{213} is equivalent to \begin{equation}
       \begin{split}
           [\ol{T}:\ol{S}](X) &= (-1)^m(-1)^m\sgn(\tau_{\ol{T}})[\ol{T}^{\perp}:\ol{S}](X) \text{\ for any $S,T\in\sB$} \\  \Longleftrightarrow [S:T](X) &=\sgn(\tau_S)[S^\perp:T](X) \text{\ for any $S,T\in\sB$.}  \label{215}
       \end{split}
   \end{equation} 
   By \eqref{214} and \eqref{215}, we deduce that \begin{equation*}
       [S:T](X)=\sgn(\tau_T)[S:T^\perp](X)=\sgn(\tau_T)\sgn(\tau_S)[S^\perp: T^\perp](X).
   \end{equation*}
   Thus, $X\in Z^+$. If $L_1\in \OGr(m,E)_-$, the proof is completely analogous. 
\end{proof}

Now we see that the diagram \eqref{diagQ-1} induces \begin{flalign}
    \RO_+\sqcup\RO_- \xleftarrow{\ p_1\ } Q^+\xrightarrow{\ p_2\ } Z^+\sset \ud{\End}(E).  \label{diagQ-2}
\end{flalign}  
Set \[Q^+_1\coloneqq p_1\inverse(\RO_+),\quad Q^+_2\coloneqq p_1\inverse(\RO_-). \] Then \[Q^+=Q^+_1\sqcup Q^+_2\] is the decomposition of $Q^+$ into irreducible components. By the proof of Lemma \ref{lem-facZ}, we have \begin{flalign*}
	 p_2(Q_1^+)\sset Z_1^+\text{\ and\ } p_2(Q_2^+)\sset Z_2^+.
\end{flalign*}  

For any $X\in Z^+(\kappa)$, then $\Im(X)$ is a totally isotropic subspace in $\ol{E}$. Hence, there exists a maximal totally isotropic subspace $L_1$ containing $\Im(X)$. Similarly, we have a subspace $L_2\in\OGr(m,\ol{E})$ containing $\Im(X^\ad)$. Assume that $L_1\in \OGr(m,\ol{E})_+$. By the proof of Lemma \ref{lem-facZ}, the formula \eqref{214} holds for $X$. Since $X\in Z^+$, we obtain that \eqref{213} holds for $X$. Hence, we have $L_2\in\OGr(m,\ol{E})_{(-1)^m}$. Similarly, if $L_1\in \OGr(m,\ol{E})_-$, then $L_2\in \OGr(m,\ol{E})_{(-1)^{m+1}}$. In particular, we have $X\in Z_1^+\cup Z_2^+$ and $(X,L_1,L_2)\in Q^+$.  Therefore, $p_2: Q^+\ra Z^+$ is surjective and \begin{flalign*}
	 p_2(Q_1^+)= Z_1^+, p_2(Q_2^+)= Z_2^+ \text{\ and\ } Z^+=Z_1^+\cup Z_2^+.
\end{flalign*}  
Since $Q_1^+$ and $Q_2^+$ are irreducible, \begin{flalign}
	Z^+=Z_1^+\cup Z_2^+  \label{Zdecom}
\end{flalign}  is the irreducible decomposition of $Z^+$.

Note that over the open dense locus $\cbra{\rk X=m}$ of $Z^+_j$ ($j=1,2$), the morphism $p_2$ is an isomorphism ($L_1=\Im(X)$ and $L_2=\Im(X^\ad)$ are uniquely determined by $X$). In particular, we have the following.
\begin{prop}\label{prop-zjgensm}
	 For $j=1,2$, the scheme $Z^+_j$ is irreducible and generically smooth over $\BZ[1/2]$ of relative dimension $m(2m-1)$. Moreover, each geometric fiber of $Z^+_j\ra \Spec\BZ[1/2]$ is irreducible.  
\end{prop}
\begin{proof}
	Let $R=\BZ[1/2]$ or an algebraically closed field of characteristic different from $2$. By the base change to $R$, the diagram \eqref{diagQ-2} induces \begin{flalign*}
    (\RO_+\otimes R) \sqcup (\RO_-\otimes R) \xleftarrow{\ p_1\ } Q^+\otimes R \xrightarrow{\ p_2\ } Z^+\otimes R.  
\end{flalign*}  
All the tensors are over $\BZ[1/2]$. Note that $\RO_\pm\otimes R$ is also irreducible and smooth over $R$. The same arguments as in the case $R=\BZ[1/2]$ imply that \begin{flalign}
	 Z^+\otimes R = (Z_1^+\otimes R)\cup (Z_2^+\otimes R) \label{ZRdecom}
\end{flalign} is the irreducible decomposition of $Z^+\otimes R$, and that $Z_j^+\otimes R$ has the properties as in the proposition.
\end{proof}

\begin{defn}
	Assume $N\geq 1$ and $0\leq \ell\leq m$ with $\ell\neq N/2$. Let $Q_\ell$ be the moduli functor sending any $\BZ[1/2]$-scheme $S$ to the set of triples $(X,L_1,L_2)$, where $X\in\ud{\End}(E)(S)$, $L_1,L_2\in \OGr(\ell,E)(S)$ such that $\Im(X)\sset L_1$ (resp. $\Im(X^\ad)\sset L_2$). 
\end{defn}
Note that $\OGr(\ell,E)$ is irreducible for $0\leq \ell\leq m$ with $\ell\neq N/2$. By similar arguments for $Q^+$, we have a diagram \begin{flalign*}
	\OGr(\ell,E)\times \OGr(\ell,E)\xleftarrow{\ p_1\ } Q_\ell \xrightarrow{\ p_2\ } Z_\ell\sset \ud{\End}(E),
\end{flalign*} 
and the following proposition holds.

\begin{prop}\label{prop-Zlreduced}
	The scheme $Z_\ell$ is irreducible and smooth (resp. generically smooth) of relative dimension $\ell(2N-2\ell-1)$ over $\BZ[1/2]$. Moreover, each geometric fiber of $Z_\ell\ra \Spec\BZ[1/2]$ is irreducible. 
\end{prop}

\subsection{Relation to Schubert varieties and reducedness results}  \label{subsec48}
Set \begin{flalign*}
	\sR(\ell) &\coloneqq \frac{\BZ[1/2][X]}{(XH_NX^t,X^tH_NX,\wedge^{\ell+1}X)};\\
	\text{and\ } \sR^\pm &\coloneqq \frac{\BZ[1/2][X]}{(XH_NX^t,X^tH_NX,\wedge^{m+1}X)+{}_{H}\rI^\mp} (\text{for $N=2m$}),
\end{flalign*}
where ${}_H\rI^\pm$ is defined as in \eqref{Rikequation}.

\begin{lemma}\label{twoRisom}
	We have $\sR(\ell)\simeq {}_J\CR(\ell)$ and $\sR^\pm\simeq {}_J\CR^\pm$.
\end{lemma}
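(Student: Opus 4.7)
The plan is to write down an explicit change-of-variable automorphism that identifies the two presentations. The key point is that $H=H_N$ and $J=J_N$ represent the same split symmetric bilinear form over $\BZ[1/2]$, differing only by a choice of basis: $H$ pairs the standard basis vectors via $e_i\leftrightarrow e_{N+1-i}$, while $J$ pairs them via $(e_{2k-1},e_{2k})$. Such a change of basis already exists over $\BZ$. First I would take the permutation $\sigma$ of $[1,N]$ defined by $\sigma(2k-1)=k$ and $\sigma(2k)=N+1-k$ for $1\leq k\leq m$ (with $\sigma(2m+1)=m+1$ when $N$ is odd), which sends each $J$-dual pair to an $H$-dual pair. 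Routine case checking shows that the associated permutation matrix $P=P_\sigma$ satisfies $P^tHP=J$.

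Next, I would define the $\BZ[1/2]$-algebra automorphism $\phi\colon \BZ[1/2][X]\simto \BZ[1/2][Y]$ by $X\mapsto PYP^t$. The computations
\begin{flalign*}
    \phi(XH X^t) &= (PYP^t)H(PYP^t)^t = P(YJY^t)P^t, \\
    \phi(X^t H X) &= (PYP^t)^tH(PYP^t) = P(Y^tJY)P^t,
\end{flalign*}
combined with the classical invariance of determinantal ideals under left/right multiplication by invertible matrices (so $(\wedge^{m+1}(PYP^t))=(\wedge^{m+1}Y)$ as ideals), show that $\phi$ carries the defining ideal of $\sR$ bijectively onto that of $\CR$. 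This yields $\sR\simeq\CR$.

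For the $\pm$ version, the remaining task is to track how the spin generators transform. Since $P$ is a permutation matrix, $\phi$ sends each bideterminant $[S:T](X)$ to $\pm[\sigma^{-1}(S):\sigma^{-1}(T)](Y)$ after reindexing via $\sigma^{-1}$ and picking up the associated reordering sign. The essential geometric input is that $\sigma$ intertwines the two duality involutions: $\sigma^{-1}(S^\perp)=U^\perp$ with $U=\sigma^{-1}(S)$, where the $\perp$ on the left is that of \S \ref{subsubsecLM4} and on the right is that of Definition \ref{defn-RNJ}. This follows directly from the pairing identity $\sigma(2k-1)+\sigma(2k)=N+1$. It then remains to compare the signs $\sgn(\sigma_S)$, $\sgn(\tau_U)$, and the various reordering signs; this sign bookkeeping is the one real obstacle. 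The outcome is that ${}_H\rI^\mp$ maps onto either ${}_J\CI^\mp$ or ${}_J\CI^\pm$, with the choice of sign independent of $S$ and $T$. In the first case we obtain $\sR^\pm\simeq\CR^\pm$ directly, and in the second Remark \ref{rmk+-} (which provides $\CR^+\simeq\CR^-$) supplies the missing swap and completes the proof.
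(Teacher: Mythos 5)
Your proof follows essentially the same route as the paper: both choose a permutation matrix conjugating $H_N$ to $J_N$ (your $P$ is the inverse convention to the paper's $C$, where $H_N = C^t J_N C$, and the substitution is $X \mapsto C^t X C$) and transport the defining ideals across the resulting algebra isomorphism. You are in fact slightly more explicit than the paper — which simply asserts "the lemma follows by substituting $X$ by $C^t X C$" with no mention of the spin ideal — in flagging that the permutation must intertwine the two $\perp$-involutions and that a uniform sign needs to be tracked (with Remark \ref{rmk+-} available as a safety net if it flips); that bookkeeping is elided in both accounts, so you are not missing anything the paper supplies.
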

\begin{proof}
	Denote by $C\coloneqq C_N$ the $N\times N$ matrix corresponding to the permutation $\sigma$ of $\sbra{1,N}$ given by $\sigma(j)=2j-1$ for $1\leq j\leq \lceil N/2\rceil$ and $\sigma(j)=2(N+1-j)$ for $\lceil N/2\rceil +1\leq j\leq N$. Then we have $H_N=C^tJ_NC$, and $C$ is the transition matrix changing the standard basis $\cbra{e_1,\ldots,e_N}$ to the new basis $\tcbra{e_j'}_{j\in\sI}$, where
\begin{flalign*}
	e'_{\barj}\coloneqq e_{2j-1} \text{\ and\ } e_j'=e_{2j} \text{\ for $1\leq j\leq m$}  \quad &\text{if $N=2m$};\\ \text{and,\ } e'_{\barj}\coloneqq e_{2j-1} \text{\ and\ } e_j'=e_{2j}\text{\ for $1\leq j\leq m$}, e_0\coloneqq e_N  &\text{\ if $N=2m+1$}.
\end{flalign*} 
The lemma follows by substituting $X$ by $C^tXC$.
\end{proof}

\begin{corollary}\label{coro-red46}
	For $0\leq \ell<m$ and $j=1,2$, denote $Z_{\ell,k}\coloneqq Z_\ell\otimes_{\BZ[1/2]}k$ and $Z^+_{j,k}\coloneqq Z^+_j\otimes_{\BZ[1/2]}k$. Then $(Z_{\ell,k})_\red$ and $(Z_{j,k}^+)_\red$ are irreducible, normal and Cohen--Macaulay.
\end{corollary}
\begin{proof}
	By Proposition \ref{prop-zjgensm} and \ref{prop-Zlreduced}, $Z_{\ell,k}$ and $Z_{j,k}^+$ are irreducible. Denote \begin{flalign}
		Z_{k}^+\coloneqq \Spec {}_J\CR^+\otimes_{\BZ[1/2]}k.  \label{eq-zk+20}
	\end{flalign} By \eqref{ZRdecom}, \begin{flalign*}
		Z_k^+ = Z_{1,k}^+\cup Z_{2,k}^+
	\end{flalign*} is the irreducible decomposition of $Z_k^+$. By Lemma \ref{twoRisom} and Corollary \ref{coro-redCM}, we complete the proof.  
\end{proof}

\begin{prop}\label{prop-zsred}
	For $0\leq \ell<m$ and $j=1,2$, the schemes $Z_{\ell,k}$ and $Z^+_{j,k}$ are reduced. In particular, they are irreducible, normal and Cohen--Macaulay by Corollary \ref{coro-red46}.
\end{prop}
\begin{proof}
	By Theorem \ref{thm-basis} and Theorem \ref{thm-R12flat}, the schemes $Z_\ell$ and $Z_j^+$ are reduced and flat over $\BZ[1/2]$. By Proposition \ref{prop-zjgensm} and Proposition \ref{prop-Zlreduced}, the schemes $Z_{\ell,k}$ and $Z_{j,k}^+$ obtained by base change are generically smooth, hence generically reduced. By Corollary \ref{coro-red46}, $(Z_{\ell,k})_\red$ and $(Z_{j,k}^+)_\red$ are  irreducible and normal. By Hironaka's lemma (\cite[Corollary 3]{kollar1995flatness}), we obtain that the special fibers $Z_{\ell,k}$ and $Z_{j,k}^+$ are reduced.
\end{proof}

Next we will show that the special fiber $Z_k^+$ (see \eqref{eq-zk+20}) is also reduced. We begin with the following proposition, which may be viewed as a variant of Hironaka's lemma. 
\begin{prop}\label{prop-reducedZ}
	Let $\RY\ra \Spec D$ be a locally Noetherian flat scheme over a Dedekind domain $D$. Denote by $\RY_\eta$ (resp. $\RY_s$) the fiber of $\RY$ over the generic point (resp. a closed point $s$) of $D$. Assume that \begin{enumerate}
	    \item[(C1)] $\RY$ is reduced and consists of two irreducible components $\RY_1$ and $\RY_2$, and the schematic intersection $\RY_1\cap \RY_2$ is integral and has codimension one in both $\RY_1$ and $\RY_2$; 
		\item[(C2)] $\RY_s$ has exactly two irreducible components $(\RY_1)_s$ and $(\RY_2)_s$, and the schematic intersection $(\RY_1\cap \RY_2)_s=(\RY_1)_s\cap (\RY_2)_s$ is Cohen--Macaulay;
		\item[(C3)] for $j=1,2$, $(\RY_j)_s$ is generically reduced, and the reduction $(\RY_j)_{s,\red}$ is normal and Cohen--Macaulay.
	\end{enumerate}  
	Write $D_s$ for the localization of $D$ at $s$. Then $\RY\otimes_DD_s$ is Cohen--Macaulay with reduced fiber $\RY_s$.  
\end{prop}
\begin{proof}
	Suppose $j=1$ or $2$. Note that $\RY_j$ is flat over $D$, as $\RY_j$ is integral and $\RY_j\ra \Spec D$ is dominant; see \cite[Proposition 14.14]{gortz2020algebraic}. Similarly, $\RY_1\cap \RY_2$ is also flat over $D$.  Applying Hironaka's lemma \cite[Corollary 3]{kollar1995flatness} to $\RY_j$ (by condition (C3)), we obtain that $(\RY_j)_s=(\RY_j)_{s,\red}$ is reduced and Cohen--Macaulay. Since $\RY_j$ is $D$-flat and $D$ is a Dedekind domain, we obtain that $\RY_j\otimes_DD_s$ is also Cohen--Macaulay by \cite[0C6G]{stacks-project}. Similarly, by (C2), $(\RY_1\cap \RY_2)\otimes_DD_s$ is Cohen--Macaulay. Since $\RY$ is reduced by (C1), $\RY\otimes_DD_s$ is reduced and $\RY\otimes_DD_s$ is the schematic union $(\RY_1\otimes_DD_s)\cup (\RY_2\otimes_DD_s)$. By (C1) and \cite[Lemma 4.22]{gortz2001flatness}, $\RY\otimes_DD_s$ is Cohen--Macaulay. Since $\RY$ is flat over $D$, the special fiber $\RY_s$ of $\RY\otimes_DD_s$ is also Cohen--Macaulay. By (C3), $\RY_s$ is generically reduced. Then we obtain that $\RY_s$ is reduced by Serre's criterion of reducedness \cite[0344]{stacks-project}.
\end{proof}

\begin{coro}\label{prop-Zk+red}
	The scheme $Z^+=\Spec{}_J\CR^+$ is Cohen--Macaulay and the special fiber $Z^+_{k}$ is reduced.
\end{coro}
\begin{proof}
	We prove this by applying Proposition \ref{prop-reducedZ} to $\RY=Z^+$ over $D=\BZ[1/2]$. By Theorem \ref{thm-R+basis}, $Z^+$ is reduced and flat over $\BZ[1/2]$. By \eqref{Zdecom}, $Z^+$ consists of two irreducible components $Z_1^+$ and $Z_2^+$. By Propositions \ref{prop-zjgensm} and \ref{prop-zsred}, and faithfully flat descent, the special fibers $Z_{1}^+\otimes_{\BZ[1/2]}\BF_p$ and $Z_{2}^+\otimes_{\BZ[1/2]}\BF_p$ are normal and Cohen--Macaulay. By construction, we have \begin{flalign*}
		Z_1^+\cap Z_2^+ = \Spec \frac{\BZ[1/2][X]}{(XJX^t,X^tJX,\wedge^{m}X)} = Z_{m-1},
	\end{flalign*}
	which is integral by Theorem \ref{thm-basis} and Proposition \ref{prop-Zlreduced}. By the dimension formula in Proposition \ref{prop-Zlreduced} and Proposition \ref{prop-zjgensm}, $Z_1^+\cap Z_2^+$ has codimension one in both $Z_1^+$ and $Z_2^+$. By Proposition \ref{prop-zsred} and faithfully flat descent, we have that $Z_{m-1}\otimes_{\BZ[1/2]}\BF_p=(Z_1^+\cap Z_2^+)\otimes_{\BZ[1/2]} \BF_p$ is Cohen--Macaulay. Now all the conditions in Proposition \ref{prop-reducedZ} are satisfied for $Z^+$, and hence the localization $Z^+\otimes_{\BZ[1/2]}(\BZ[1/2])_{(p)}$ (for all odd $p$) is Cohen--Macaulay with reduced special fiber $Z^+_{\BF_p}=Z^+\otimes_{\BZ[1/2]}\BF_p$. Then we obtain that $Z^+$ is Cohen--Macaulay and that $Z^+_k=(Z^+_{\BF_p})\otimes_{\BF_p}k$ is reduced (see \cite[030U]{stacks-project}).
\end{proof}

Proposition \ref{prop-zsred} and Corollary \ref{prop-Zk+red}, together with Lemma \ref{twoRisom}, implies Theorem \ref{intro-2}.

\section{Flatness of spin local models}\label{spinflat}

In this section, we prove that $\RM^\pm_\CL$ is $\CO$-flat. This implies that $\RM^\pm_\CL$ is normal, Cohen--Macaulay, $\CO$-flat with reduced special fiber (see Theorem \ref{yangthm} (1)).

\subsection{Pseudo-maximal parahoric case}
By Theorem \ref{introthm-conjpara}, we may assume that $\CL$ is the standard lattice chain $\Lambda_I$ for some non-empty $I\sset [0,n]$. In this subsection, we focus on the case $I=\cbra{i}$. We may further assume $2i\leq n$. By Corollary \ref{coro-ifreduced} and \ref{coro-Ureduced}, it remains to show that $\RU_{i,k}^\pm$ is reduced.  

Recall that, in \S \ref{subsubsec-equations}, we have closed immersions \begin{flalign*}
    	\RU^\pm_{i,k}\hookrightarrow Y^\pm\hookrightarrow \RU^\naive_{i,k},
    \end{flalign*}
    where $Y^\pm\coloneqq \Spec\RR^\pm_{i,k}\times_k\BA_k^{(n-2i)(n+2i-1)/2}$ for the ring $\RR^\pm_{i,k}$ defined by \eqref{Rikequation}.

\begin{prop}\label{lem-Rikreduced} 
    \begin{enumerate}
    	\item  The ring $\RR^\pm_{i,k}$ is reduced.
    	\item  $\RU^\pm_{i,k}\simeq Y^\pm$, and $\RM^\pm_{i,k}$ is reduced. 
    \end{enumerate}
\end{prop}
\begin{proof}
    (1) It immediately follows from Theorem \ref{intro-2} (2).
    
    (2) If $i=0$, then the statement is trivial. Suppose $i\neq 0$.  
    By Proposition \ref{prop-stratificationMpm} and \eqref{ZRdecom}, $\RU^\pm_{i,k}$ and $Y^\pm$ both have two irreducible components and $\RU^\naive_{i,k}=\RU^+_{i,k}\cup \RU^-_{i,k}$. Hence, the closed immersion $\RU^\pm_{i,k}\hookrightarrow Y^\pm$ is in fact a surjection. As $Y^\pm$ is reduced by (1), we have $\RU^\pm_{i,k}\simeq Y^\pm$.  
By Corollary \ref{coro-Ureduced},  $\RM^\pm_{i,k}$ is reduced.
\end{proof}


Proposition \ref{lem-Rikreduced} and Proposition \ref{lem-coverworst} imply Theorem \ref{intro-chart}.


\subsection{General parahoric case}
Let $I\sset [0,n]$ be non-empty. Then for each $i\in I$, the scheme $\RM^\pm_{i,k}$ is reduced by Proposition \ref{lem-Rikreduced}. 
Recall that  \begin{flalign}
	\RM^\pm_{I,k} = \bigcap_{i\in I}\rho_i\inverse(\RM^\pm_{i,k})   \label{inters}
\end{flalign}
as a schematic intersection (see \eqref{MIintersec}). Since $$\rho_i\colon LG^{\flat\circ}/L^+\sG_I^{\flat\circ}\ra LG^{\flat\circ}/L^+\sG_i^{\flat\circ}$$ is a $L^+\sG_i^{\flat\circ}/L^+\sG_I^{\flat\circ}$-bundle (which is smooth),  the scheme $\rho_i\inverse(\RM^\pm_{i,k})$ is also reduced.
As $\rho_i$ is $L^+\sG_I^{\flat\circ}$-equivariant, it follows that $\rho_i\inverse(\RM^\pm_{i,k})$ is a union of Schubert varieties in $LG^{\flat\circ}/L^+\sG^{\flat\circ}_I$. 

\begin{lemma}
	Any Schubert variety in $LG^{\flat\circ}/L^+\sG^{\flat\circ}_I$ is compatibly Frobenius-split. In particular, arbitrary intersections of unions of Schubert varieties are reduced.
\end{lemma}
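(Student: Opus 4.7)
The plan is to invoke Faltings's theorem on Frobenius splittings of affine flag varieties, combined with the standard formal properties of compatibly split subschemes. First I would reduce to the Iwahori case: pick an Iwahori subgroup scheme contained in each $\sG_i^{\flat\circ}$ for $i\in I$, and consider the natural projection $\pi$ from the corresponding (full) affine flag variety onto $LG^{\flat\circ}/L^+\sG_I^{\flat\circ}$. This $\pi$ is smooth and proper with geometrically connected fibers (ordinary partial flag varieties of the reductive quotient of $\sG_I^{\flat\circ}\otimes k$), so $\pi_\ast\CO=\CO$, and every Schubert variety in the target arises as the scheme-theoretic image of a Schubert variety upstairs.

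By Faltings's theorem (extending Mathieu's work on Kac--Moody flag varieties), the full affine flag variety admits a Frobenius splitting that is simultaneously compatible with every Schubert subvariety. Because direct image along a morphism satisfying $\pi_\ast\CO=\CO$ preserves Frobenius splittings as well as compatibility with images of compatibly split subvarieties, we descend this splitting along $\pi$ to obtain a Frobenius splitting of $LG^{\flat\circ}/L^+\sG_I^{\flat\circ}$ that is compatible with every Schubert variety, establishing the first assertion.

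The second assertion will then follow from the standard formal properties of compatibly Frobenius-split closed subschemes of a fixed ambient scheme: a finite union of compatibly split subschemes is compatibly split, and the scheme-theoretic intersection of any collection of compatibly split subschemes is itself compatibly split, hence in particular reduced. In our ind-scheme setting, any intersection of unions of Schubert varieties is, after restriction to a sufficiently large single Schubert variety containing the set-theoretic intersection, a finite intersection of closed subschemes, so the reducedness claim reduces to the finite case where the compatible-splitting argument applies directly.

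The principal point requiring care is merely to confirm that Faltings's splitting theorem applies to the equal-characteristic ind-scheme at hand; this is handled in the Faltings/Pappas--Rapoport framework already used throughout the paper, so I expect no substantial obstacle here.
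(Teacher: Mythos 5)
Your proposal is correct and, unwound one level, coincides with the underlying mathematics of the paper's proof: both rest on a single Frobenius splitting of the partial affine flag variety that is simultaneously compatible with all Schubert subvarieties, from which reducedness of arbitrary intersections of unions follows by the standard formal calculus of compatibly split subschemes (the paper cites \cite[Proposition 2.3]{gortz2001flatness} for that last step). The difference is one of packaging: you reduce to Iwahori level, invoke Faltings's splitting theorem for the full affine flag variety, and descend the splitting along the smooth proper projection $\pi$ with $\pi_*\mathcal{O}=\mathcal{O}$; the paper instead cites \cite[Theorem 0.3]{PR08} for normality of Schubert varieties and then \cite[Theorem 4.1]{fakhruddin2025singularities} for the resulting compatible Frobenius splitting. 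Your route is essentially the original Pappas--Rapoport argument run by hand; the paper's citation chain is shorter and shifts the work onto \cite{fakhruddin2025singularities}.

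The one point you should state explicitly rather than defer: $G^{\flat\circ}$ here is the split group $\mathrm{GSO}_{2n}$ over $k((t))$, which is not simply connected. Faltings's theorem is for the flag variety of a simply connected Kac--Moody group, and for a non-simply-connected $G$ the affine flag variety can fail to be reduced when $p$ divides $|\pi_1(G_{\mathrm{der}})|$. The passage from the simply connected cover down to $G^{\flat\circ}$ is exactly where the hypothesis $p\nmid |\pi_1(G^{\flat\circ}_{\mathrm{der}})|=2$, i.e.\ $p\neq 2$, enters; this is the substance of \cite[Theorem 0.3]{PR08} and is why the paper explicitly records ``Since $G^{\flat\circ}$ is split and $p\neq 2$\ldots''. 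Your closing sentence gestures at this, but since it is the only non-formal input it should be stated as a hypothesis rather than waved through.
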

\begin{proof}
	This follows from the standard theory of (affine) Schubert varieties. We refer to \cite[Theorem 4.1]{fakhruddin2025singularities} for the precise meaning of ``compatibly Frobenius-split". Since $G^{\flat\circ}$ is split and $p\neq 2$, any Schubert variety is normal by \cite[Theorem 0.3]{PR08}. It follows that any Schubert variety is compatibly Frobenius-split by \cite[Theorem 4.1]{fakhruddin2025singularities}. Then we obtain that arbitrary intersections of unions of Schubert varieties are Frobenius-split, and hence are reduced by \cite[Proposition 2.3]{gortz2001flatness}. 
\end{proof}

By the previous lemma and \eqref{inters}, we immediately obtain that $\RM^\pm_{I,k}$ is reduced. We then complete the proof of Theorem \ref{mainthm} in the general parahoric case.

\section{Application to moduli spaces of type $D$}\label{sec-application}
In this section, we apply previous results to study moduli spaces of type D in \cite{rapoport1996period}.

Let $(\bB,*)$ be a definite quaternion algebra over $\BQ$ equipped with a positive involution $*$. Let $(\bV,\pair{\ ,\ })$ be a free $\bB$-module of rank $n$, endowed with a $\BQ$-valued perfect alternating bilinear form $\pair{\ ,\ }$ which is skew-hermitian, i.e., $\pair{bv,w}=\pair{v,b^*w}$ for all $v,w\in \bV$, $b\in \bB$. Let $\bG=\bG(\bB,*,\bV,\pair{\ ,\ })$ denote the reductive group over $\BQ$ whose $R$-points for any $\BQ$-algebra $R$ are given by \begin{flalign*}
    \bG(R)\coloneqq \cbra{g\in \GL_{\bB\otimes_\BQ R}(\bV\otimes_\BQ R)\ |\ \pair{gv,gw}=c(g)\pair{v,w} \text{\ for some\ } c(g)\in R\cross }.
\end{flalign*} 
We have $\bG_{\ol{\BQ}}\simeq \GO_{2n,\ol{\BQ}}$.
Let $\bh\colon\Res_{\BC/\BR}\BG_m\ra \bG_\BR$ be a homomorphism which defines on $\bV_\BR$ a Hodge structure of type $\cbra{(-1,0),(0,-1)}$. The pairing $\pair{-,\bh(i)-}$ is a symmetric positive definite bilinear form on $\bV_\BR$. 

Then we obtain a PEL-datum $(\bB,*,\bV,\pair{\ ,\ },\bh)$ of type $D_n$. 
Let $\bE$ be the associated reflex field in the sense of \cite[1.2.5.4]{lan13}.

\begin{lem}
    We have $\bE=\BQ$.
\end{lem}
\begin{proof}
    Denote by $\bV_\BC=V^{-1,0}\oplus V^{0,-1}$ the Hodge decomposition determined by $\bh$. Note that  \begin{flalign*}
        \bE= \BQ\rbra{\Tr(b\ |\ V^{-1,0})\ |\ b\in \bB },
    \end{flalign*} 
    by \cite[Corollary 1.2.5.6]{lan13}. The characteristic polynomial of $b\in \bB$ acting on $V^{-1,0}$ is given by \begin{flalign*}
        \Char(b\ |\ V^{-1,0}) = \Char(b)^n\in \BQ[T],
    \end{flalign*}
    where $\Char(b)\in\BQ[T]$ denotes the reduced characteristic polynomial of $b$, see \cite[\S 2.3, pp. 550]{yu2021reduction}. In particular, we obtain that $\Tr(b\ |\ V^{-1,0})\in \BQ$. Therefore, $\bE=\BQ$. 
\end{proof}

\begin{remark}
    Suppose $n\geq 4$. Denote by $$\mu_\bh\coloneqq \bh_\BC(-,1)\colon \BG_{m,\BC}\ra \bG_\BC$$ the Hodge cocharacter. By the classification of Shimura data of PEL type, we have two possibilities for the (geometric) conjugacy class $\cbra{\mu_\bh}$, corresponding to the minuscule coweights $\mu_+$ and $\mu_-$ in \eqref{cochar}.
\end{remark}

The following lemma is a special case of the Morita equivalence.
\begin{lem}\label{lem-morita}
    Let $R$ be a commutative ring.
    Let $e_{11}\coloneqq \begin{psmallmatrix}
        1 &0\\ 0 &0
    \end{psmallmatrix}$ and $e_{22}\coloneqq \begin{psmallmatrix}
        0 &0\\ 0 &1
    \end{psmallmatrix}$ be two standard idempotents in the matrix algebra $M_2(R)$. Let $W$ be a (left) $M_2(R)$-module. We have $W=e_{11}W\oplus e_{22}W$ as $R$-modules. Then the functor $$W\mapsto e_{11}W$$ defines an equivalence between the category of $M_2(R)$-modules and the category of $R$-modules.
\end{lem}
\begin{proof}
    Let $W_1$ be an $R$-module. Set $W\coloneqq W_1\oplus W_1$. Viewing elements in $W$ as $2\times 1$ column vectors, the module $W$ becomes a natural $M_2(R)$-module. Clearly, the functor $W_1\mapsto W$ defines an inverse of the functor in the lemma. 
\end{proof}
\begin{defn}
    We say that the decomposition $W=e_{11}W\oplus e_{22}W$ in Lemma \ref{lem-morita} is the \dfn{Morita decomposition} of $W$. Sometimes we simply write $W=W_1\oplus W_2$.
\end{defn}

Let $p$ be an odd prime. Suppose that $B\coloneqq \bB\otimes_\BQ\BQ_p\simeq M_2(\BQ_p)$. Let $V\coloneqq \bV_{\BQ_p}=V_1\oplus V_2$ denote the Morita decomposition. Then $V_1$ is a $2n$-dimensional vector space over $\BQ_p$. Let $C\coloneqq \begin{psmallmatrix}
    0 &1\\ -1 &0
\end{psmallmatrix}$ be the Weil element in $B$ (identified with $M_2(\BQ_p)$) so that the positive involution $*$ on $B$ is given by $A^*=CA^tC\inverse$. Define \begin{flalign*}
    (x ,y)\coloneqq \pair{x, Cy}
\end{flalign*}
for $x,y\in V$. Since $C^*=-C$, the paring $(\ ,\ )$ is a non-degenerate symmetric pairing on $V$. We also use $(\ ,\ )$ to denote its restriction to the Morita component $V_1$.

\begin{lem}
    The group $\bG_{\BQ_p}$ is isomorphic to $\GO(V_1,(\ ,\ ))$.
\end{lem}
\begin{proof}
    Let $R$ be a $\BQ_p$-algebra. By Lemma \ref{lem-morita}, we have \begin{flalign}
        \GL_{B\otimes_{\BQ_p} R}(V\otimes_{\BQ_p}R) = \GL_{R}(V_1\otimes_{\BQ_p}R).  \label{51}
    \end{flalign}
    Let $x,y\in V\otimes_{\BQ_p} R$. With respect to the Morita decomposition, we can write $x=x_1+x_2$ and $y=y_1+y_2$ for $x_i,y_i\in V_i\otimes_{\BQ_p} R$ for $i=1,2$. For $g\in \GL_{B\otimes_{\BQ_p} R}(V\otimes_{\BQ_p}R)$, we write $g_1$ for the corresponding element in $\GL_R(V_1\otimes_{\BQ_p}R)$ by \eqref{51}. By construction, the condition \begin{flalign*}
        \pair{gx,gy}=c(g)\pair{x,y} \text{\ for some $c(g)\in R\cross$}
    \end{flalign*}
    is equivalent to the condition that \begin{flalign*}
        (g_1x_1,g_1y_1)=c(g_1)(x_1,y_1) \text{\ for some $c(g_1)\in R\cross$}.
    \end{flalign*}
     Hence, the lemma follows. 
\end{proof}

Assume that there exists a $\BQ_p$-basis $(e_1)_{1\leq i\leq 2n}$ of $V_1$ such that $(e_i,e_j)=\delta_{i,2n+1-i}$. Denote $\CO_B=M_2(\BZ_p)$. Let $\sL$ be a self-dual (with respect to $\pair{\ ,\ }$) $\CO_B$-lattice chain in $V$ whose first component in the Morita decomposition is given by a self-dual (with respect to $(\ ,\ )$) $\BZ_p$-lattice chain $\CL$ of $V_1$. 
Let $\sG$ denote the (affine smooth) group scheme of similitude automorphisms of $\sL$. Set \begin{equation*}
    \bK_p\coloneqq \sG(\BZ_p)\sset \bG(\BZ_p).
\end{equation*}

Applying the results of the previous sections to the case $F\coloneqq \BQ_p$, we obtain the spin local model $$\RM^\pm_\CL\ra \Spec\BZ_p.$$ It follows that $\RM^\pm_\CL$ is a topological flat $\BZ_p$-scheme, and moreover it is $\BZ_p$-flat with reduced special fiber by Theorem \ref{mainthm}. 

\begin{defn}\label{Mpmm}
    Denote by $\RM_\sL^\pm$ the functor \begin{flalign*}
        \Sch_{/\BZ_p}^\op\ra \Sets
    \end{flalign*}
    which sends a scheme $S$ over $\BZ_p$ to the set of $\CO_S$-modules $(\CF_\Lambda)_{\Lambda\in\sL}$, where $\CF_{\Lambda}$ is a locally free $\CO_S$-submodule of  $\Lambda_S\coloneqq \Lambda\otimes_{\BZ_p}\CO_S$  of rank $2n$ such that \begin{enumerate}[label=(\roman*)]
        \item for any $\Lambda\in\sL$, the $\CO_S$-submodule $\CF_\Lambda\sset \Lambda_S$ is Zariski locally direct summand;
        \item for any $\Lambda\in\sL$, $\CF_\Lambda$ is invariant under the $\CO_B$-action on $\Lambda_S$;
        \item for any inclusion $\Lambda\sset \Lambda'$ in $\sL$,  the natural map $\Lambda_S\ra \Lambda'_S$ induced by $\Lambda\hookrightarrow\Lambda'$ sends $\CF_\Lambda$ to $\CF_{\Lambda'}$; and the isomorphism $\Lambda_S\simto(p\Lambda)_S$ induced by $\Lambda\xrightarrow{p}p\Lambda$ identifies $\CF_\Lambda$ with $\CF_{p\Lambda}$;
        \item the perfect skew-Hermitian pairing $\varphi\colon \Lambda_S\times \Lambda^\#_S\ra \CO_S$ 
        induced by $\pair{\ ,\ }$ satisfies $\varphi(\CF_\Lambda,\CF_{\Lambda^\#})=0$, where $\Lambda^\#$ denotes the dual $\CO_B$-lattice of $\Lambda$ with respect to $\pair{\ ,\ }$;
        \item for any $\Lambda\in\sL$, denote by $(\CF_{\Lambda})_1$ (resp. $(\Lambda_S)_1$) the first component in the Morita decomposition of $\CF_\Lambda$ (resp. $\Lambda_S$). Then $(\CF_{\Lambda})_1$ is locally a direct summand of $(\Lambda_S)_1$ of $\CO_S$-rank $n$. We require that $(\CF_\Lambda)_1$ satisfies (LM4$\pm$) in Definition \ref{defn-spin}. 
    \end{enumerate}
\end{defn}

By Lemma \ref{lem-morita}, the map $(\CF_\Lambda)_{\Lambda\in\sL}\mapsto ((\CF_\Lambda)_1)_{\Lambda\in\sL} $ given by the Morita decomposition defines an isomorphism (cf. \cite[Lemma 10.1]{yu2021reduction}) \begin{flalign*}
    \RM^\pm_{\sL} \simto \RM^\pm_\CL.
\end{flalign*}


We recall the following definition of Rapoport and Zink; see details in \cite[\S 6.8-6.9]{rapoport1996period}.
\begin{defn}[{\cite[Definition 6.9]{rapoport1996period}}] \label{defn-naiveS}
     Let $\bK^p$ be a sufficiently small open compact subgroup in $\bG(\BA_f^p)$. Set $\bK\coloneqq \bK_p\bK^p$.
	Denote by $\CA^\pm_{\bK}$ the functor $$\CA^\pm_{\bK}\colon (\Sch/\BZ_p)^\op\lra \Sets$$ which sends any $\BZ_p$-scheme $S$ to the set of isomorphism classes of the following data.
	\begin{enumerate}[label=(\alph*)]
		\item An $\sL$-set of abelian $\CO_B$-schemes $(A_\Lambda,\iota_\Lambda)_{\Lambda\in\sL}$ in $AV(S)$ such that $A_\Lambda$ is a $2n$-dimensional abelian scheme over $S$.
		\item A $\BQ$-homogeneous polarization $\ol{\lambda}$ of the $\sL$-set $(A_\Lambda,\iota_\Lambda)_{\Lambda\in\sL}$.
		\item A $\bK^p$-level structure $$\ol{\eta}\colon H_1(A,\BA^p_f)\simeq \bV\otimes_\BQ\BA_f^p\mod \bK^p$$ that respects the bilinear forms on both sides up to a constant in $(\BA_f^p)\cross$.
		\item We further impose the following condition\footnote{cf. \cite[\S 5.3-5.4]{smithling2015moduli}.} on $(A_\Lambda,\iota_\Lambda)_{\Lambda\in\sL}$: Choose an \etale cover $T\ra S$ such that there exists an isomorphism \begin{flalign*}
        \phi\colon M((A_{\Lambda})_T)\simeq \Lambda_T
    \end{flalign*} of $\CO_B\otimes_{\BZ_p}\CO_T$-modules, where $M((A_{\Lambda})_T)$ denotes the $\CO_T$-linear dual of the first de Rham cohomology of $(A_\Lambda)_T$.  We require that the image of the covariant Hodge filtration $\phi(\Fil^1((A_\Lambda)_T))$ in $\Lambda_T$ satisfies the spin condition in the sense of condition (v) in Definition \ref{Mpmm}.
	\end{enumerate}
\end{defn}

The functor $\CA^\pm_{\bK}$ is representable by a quasi-projective scheme over $\BZ_p$, which we again denote by $\CA^\pm_{\bK}$. We sometimes say that $\CA_\bK^\pm$ is an integral moduli space of type $D$.

\begin{remark}
    \begin{enumerate}
        \item As $\bB_{\BQ_p}\simeq M_2(\BQ_p)$, the determinant condition (Kottwitz condition) in \cite[Definition 6.9]{rapoport1996period} is automatically satisfied; see \cite[Remark 1.4 (4)]{yu2021reduction}.
        \item Let $\Sh_\bK$ denote the Shimura variety (over $\BQ$) with level $\bK$ attached to the PEL-datum $(\bB,*,\bV,\pair{\ ,\ },\bh)$. Then $\Sh_{\bK,\BQ_p}$ is (isomorphic to) an open and closed subscheme of $\CA^+_{\bK,\BQ_p}$ or $\CA^-_{\bK,\BQ_p}$, depending on the choices of $\bh$. Due to the failure of the Hasse principle for $\bG$, the scheme $\CA^\pm_{\bK,\BQ_p}$ contains connected components other than $\Sh_{\bK,\BQ_p}$. See \cite[\S 2.5]{yu2021reduction} for a more detailed discussion.
    \end{enumerate}
\end{remark}

By the local model diagram (cf. \cite[Proposition 3.33]{rapoport1996period}) and the proof of \cite[Proposition 2.3]{pappas2000arithmetic}, the scheme $\CA^\pm_{\bK}$ fits into a diagram of schemes over $\BZ_p$:
\begin{equation} \label{lmd}
       \begin{gathered}
       	  \xymatrix{
		   &\wt{\CA}_{\bK}^\pm\ar[ld]_{\alpha}\ar[rd]^\beta & \\ \CA_{\bK}^\pm &  &\RM^\pm_{\sL}\simeq \RM^\pm_\CL\ ,  }
       \end{gathered}
\end{equation}
where $\alpha$ is a $\sG$-torsor, and $\beta$ is $\sG$-equivariant and smooth of relative dimension $\dim \bG$. In particular, the scheme $\CA^\pm_\bK$ is \etale locally isomorphic to $\RM^\pm_\CL$. Thus, the main results in \S \ref{spinflat} imply the following.
\begin{corollary}\label{coro-AKflat}
    The moduli space $\CA_{\bK}^\pm$ of type $D$ is a normal, Cohen--Macaulay, flat $\BZ_p$-scheme with reduced special fiber.
\end{corollary}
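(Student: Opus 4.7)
The plan is to transfer the geometric properties of $\RM^\pm_\CL$ to $\CA^\pm_\bK$ via the local model diagram \eqref{lmd}. Concretely, I first invoke Theorem \ref{mainthm} under hypothesis (i) or (ii) to conclude that $\RM^\pm_\CL$ is $\CO$-flat, so the closed immersion $\RM^{\pm\loc}_\CL\hookrightarrow \RM^\pm_\CL$ of Definition \ref{Mlocdefin} is an isomorphism. Combined with Theorem \ref{yangthm}(1), this identifies $\RM^\pm_\CL$ with $\RM^\loc_{\sG^\circ_I,\mu_\pm}$, and thus yields that $\RM^\pm_\CL$ is normal, Cohen-Macaulay, $\BZ_p$-flat of relative dimension $n(n-1)/2$, and has reduced special fiber. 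Via the Morita isomorphism $\RM^\pm_\sL\simeq \RM^\pm_\CL$ recalled just before Definition \ref{defn-naiveS}, the scheme $\RM^\pm_\sL$ inherits all these properties.

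Next I exploit the local model diagram
\[
\xymatrix{
 & \wt{\CA}^\pm_\bK \ar[ld]_{\alpha}\ar[rd]^{\beta} & \\
 \CA^\pm_\bK & & \RM^\pm_\sL,
}
\]
where $\alpha$ is a $\sG$-torsor (in particular faithfully flat and smooth) and $\beta$ is smooth of relative dimension $\dim \bG$. It follows that $\CA^\pm_\bK$ and $\RM^\pm_\sL$ are \'etale locally isomorphic (after a smooth base change from each side). The properties \emph{flat over $\BZ_p$}, \emph{normal}, \emph{Cohen-Macaulay}, and \emph{having reduced special fiber} are all \'etale-local on source and target, and each ascends and descends along smooth surjective morphisms. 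Applying these standard descent results in turn: flatness of $\RM^\pm_\sL$ over $\BZ_p$ pulls back along $\beta$ to flatness of $\wt{\CA}^\pm_\bK$, then descends along $\alpha$ to flatness of $\CA^\pm_\bK$; the same argument works for normality, the Cohen-Macaulay property, and reducedness of the special fiber.

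The main obstacle here is essentially bookkeeping rather than a genuine mathematical difficulty: one must check that the local model diagram \eqref{lmd} is actually available at this level of generality (with the spin condition imposed in Definition \ref{defn-naiveS}(d)). This is where \cite[Proposition 3.33]{rapoport1996period} together with the argument of \cite[Proposition 2.3]{pappas2000arithmetic} come in; the construction of $\wt{\CA}^\pm_\bK$ trivializes the de Rham realization of the universal abelian scheme compatibly with the $\CO_B$-action, the polarization, and the spin condition, and the map $\beta$ sends such a trivialization to the image of the Hodge filtration in $\Lambda_T$, which by construction lies in $\RM^\pm_\sL$. Once this diagram is in place, the conclusion is immediate from the preceding paragraph, completing the proof of Corollary \ref{coro-AKflat}.
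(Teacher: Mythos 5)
Your proof is correct and takes essentially the same route as the paper: invoke Theorem \ref{mainthm} to get $\CO$-flatness of $\RM^\pm_\CL$, which (via Theorem \ref{yangthm}(1)) identifies it with the normal, Cohen-Macaulay schematic local model with reduced special fiber, pass through the Morita isomorphism, and then transfer all these properties across the local model diagram \eqref{lmd} using that $\alpha$ is a $\sG$-torsor and $\beta$ is smooth. The paper leaves the ascent/descent step implicit ("$\CA^\pm_\bK$ is \'etale locally isomorphic to $\RM^\pm_\CL$; thus the main results in \S\ref{spinflat} imply the following"), so your write-up is a slightly more careful version of exactly the argument the paper intends.
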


This proves Corollary \ref{introcoro18} in \S \ref{sec-introduction}.



\end{document}